\documentclass[11pt]{amsart}
\usepackage[dvipsnames]{xcolor}
\usepackage{amsfonts,amssymb,amsmath,amscd,amstext}
\usepackage[colorlinks=true,linkcolor=teal,citecolor=purple,urlcolor=Plum]{hyperref}
\usepackage[utf8]{inputenc}
\usepackage{graphicx}
\usepackage{changes}
\usepackage{comment}
\usepackage[noabbrev,capitalize,nameinlink]{cleveref}
\usepackage{mathdots}
\usepackage[a4paper,top=2.3cm,bottom=2.3cm,left=1.5cm,right=1.5cm]{geometry}
\usepackage{aligned-overset}
\renewcommand{\leq}{\leqslant}
\renewcommand{\geq}{\geqslant}
\renewcommand{\le}{<}
\renewcommand{\ge}{>}

\newcommand{\hhh}{{\mathcal{H}}}
\newcommand{\hn}{\mathbb{H}^n}

\newcommand{\n}[1]{\nabla^\varepsilon_{ #1 }}

\newcommand{\vh}{\nu}

\newcommand{\rr}{{\mathbb{R}}}

\newcommand{\hh}{{\mathbb{H}}}

\newcommand{\Om}{\Omega}
\newcommand{\eps}{\varepsilon}

\newcommand{\s}{\mathcal S}

\newcommand{\A}{\mathbf{A}}
\newcommand{\B}{\mathbf{B}}
\newcommand{\W}{W}
\newcommand{\We}{W_\eps}
\newcommand{\Qe}{\mathcal Q^\eps}
\newcommand{\Qei}{\mathcal Q^{\eps_i}}
\newcommand{\Ne}{\mathcal N^\eps}

\newcommand{\Le}{\mathcal L^\eps}
\newcommand{\Nes}{\mathcal N^{\eps,\sigma}}
\newcommand{\Qes}{\mathcal Q^{\eps,\sigma}}
\newcommand{\Ae}{A^\eps}
\newcommand{\Aei}{A^{\eps_i}}
\renewcommand{\n}{\mathbf{N}}
\newcommand{\flui}{\mathcal F^{\eps_i}}

\definechangesauthor[color=red]{J}
\definecolor{champagne}{rgb}{0.97, 0.91, 0.81}

\definecolor{asparagus}{rgb}{0.53, 0.66, 0.42}
\DeclareMathOperator{\ric}{Ric}

\DeclareMathOperator{\divv}{div}

\DeclareMathOperator{\trace}{trace}

\DeclareMathOperator{\spann}{span}

\newtheorem{theorem}{Theorem}[section]
\newtheorem{proposition}[theorem]{Proposition}

\theoremstyle{definition}

\makeatletter

\theoremstyle{remark}

\numberwithin{equation}{section}

\definechangesauthor[color=purple]{S}
\definechangesauthor[color=red]{P}
\definechangesauthor[color=blue]{G}

\author[G. Giovannardi]{Gianmarco Giovannardi}
\address[G. Giovannardi]{Dipartimento di Matematica e Informatica "U. Dini", Università di Firenze  \protect\newline
\indent
Viale Morgani 67/A, 50134 Firenze (FI), Italy}
\email{gianmarco DOT giovannardi AT unifi DOT it}
\author[S.~Verzellesi]{Simone Verzellesi}
\address[S. Verzellesi]{Department of Decision Sciences and BIDSA, Bocconi University \protect\newline
\indent Via R\"ontgen 1, 20136, Milano (MI), Italy}
\email{simone DOT verzellesi AT unibocconi DOT it}

\title[The Bernstein problem in higher dimensional Heisenberg groups]{The Bernstein problem in higher dimensional Heisenberg groups}

\date{\today}

\subjclass{53C17, 53A10, 35R03}
\keywords{Minimal surfaces; Bernstein problem; Heisenberg group}
\thanks{\textit{Memberships and funding information.} The authors are members of the Istituto Nazionale di Alta Matematica (INdAM), Gruppo Nazionale per l'Analisi Matematica, la Probabilità e le loro Applicazioni (GNAMPA). 
 S.~Verzellesi received funding through INdAM-GNAMPA 2026 Project \emph{Variational, Geometric, and Analytic Perspectives on Regularity} and G. Giovannardi through INdAM-GNAMPA 2026 Project  \emph{Projects Parabolic equations with drift: well posedness, gradient estimates and boundary effects}, CUP E53C25002010001. S.~Verzellesi received funding by the European Union: the European
Research Council (ERC), through StG “ANGEVA”, project number: 101076411. Views
and opinions expressed are however those of the authors only and do not necessarily reflect
those of the European Union or the European Research Council. Neither the European Union nor the granting authority can be held responsible for them}
\begin{document}
\begin{abstract}
In every sub-Riemannian Heisenberg group $\mathbb H^n$, with $n\geq 2$, we prove the existence of a global perimeter minimizer whose boundary is a smooth, non-characteristic, entire intrinsic graph but it is not a vertical hyperplane. 
\end{abstract}
\maketitle
%\tableofcontents

%\subsection*{Statements and Declarations} The authors declare that they have no financial interests or conflicts of interest related to the subject matter or materials discussed in this paper.
\section{Introduction}
After the seminal work of Bernstein in $\rr^2$ \cite{MR1544873}, the Euclidean \emph{Bernstein problem} consists in the characterization global minimizers of the perimeter. 
Thanks to a long combined effort \cite{MR200816,MR250205,MR178385,MR157263,MR233295}, it is known that global perimeter minimizers are half-spaces provided that $n\leq 7$. On the other hand, counterexamples are available for every $n\geq8$. 
The classical approach to the problem relies on the monotonicity of the perimeter density and on the characterization of of singular minimal cones. A stronger formulation of the problem, the \emph{stable Bernstein problem}, consists in the characterization of complete, minimal, stable hypersurfaces. Combining the original contributions \cite{MR546314,MR562550,MR630142,MR0423263} with very recent results \cite{MR4871955,MR4706440,MR4546104,MR4816633,MR5094441,hong2026stablebernsteintheoremmathbbr7,mazet2024stableminimalhypersurfacesmathbb}, the stable Bernstein problem is fully solved, and its outcomes agree with those of the standard Bernstein problem. 

\medskip
The sub-Riemannian Bernstein problem substantially fits into the more general task of understanding minimal surfaces within sub-Riemannian structures \cite{MR2165405,MR2983199,MR2262784,MR2472175,MR4346009,pmc1,MR4314055,SGR,MR2043961,MR3445204,MR5029815,MR2448649,MR3276118}. This research area is particularly relevant in the prototypical model of the sub-Riemannian Heisenberg group $\hn$. 
%, which is $\rr^{2n+1}$ endowed with a suitable group law and a sub-Riemannian structure (cf. \Cref{preliminaries}),
We recall that the latter is $\mathbb R^{2n+1}$, from now on equipped with coordinates $(x,y,t)=(x_1,\ldots,x_n,y_1,\ldots,y_n,t)$, endowed with an appropriate (non-abelian) group law
realizing it as \emph{stratified Lie group}. Accordingly, its Lie algebra is generated by its \emph{horizontal layer}, and the relevant metric features of $\hh^n$ are recorded by its \emph{horizontal distribution} $\hhh$, endowed with a suitable left-invariant sub-Riemannian metric.   
Owing to the above structure, $\hh^n$ carries a natural notion of \emph{horizontal perimeter}. The sub-Riemannian Bernstein problem consists then in the characterization of global minimizers of the horizontal perimeter.  %Restricting the latter to the horizontal distribution $\hhh$, and still denoting this restriction by $\langle\cdot,\cdot\rangle$, $\hn$ inherits a \emph{sub-Riemannian structure} which realizes it as a \emph{sub-Riemannian manifold.} 
%We denote by $|\cdot|$ the norm induced by $\langle\cdot,\cdot\rangle$.
%Moreover, we denote by $\nabla$ the so-called \emph{pseudohermitian connection} (cf. e.g. \cite{MR4193432}), i.e. the unique \emph{metric connection} (cf. \cite{MR1138207}) whose torsion tensor is
%\begin{equation}\label{pseudotorsion}
%    \nabla_X Y-\nabla_Y X-[X,Y]=2\langle J(X),Y\rangle T
%\end{equation}
%for any $X,Y\in\Gamma(T\hh^n)$.

\medskip
There are two important obstruction to the application of the Euclidean strategy.
%The Heisenberg group $\hh^n$ is the Lie group $(\rr^{2n+1}, \cdot)$ endowed with the non-Euclidean product $\cdot$ defined by \eqref{eq:Hproduct}. A basis of left-invariant vector fields is given  by
%\begin{equation*}
%    X_j=\frac{\partial}{\partial x_j}+y_j\frac{\partial}{\partial t}\qquad \text{and}\qquad Y_j=\frac{\partial}{\partial x_j}-x_j\frac{\partial}{\partial t}
%\end{equation*}
%for $j=1,\ldots,n$ and $T=\frac{\partial}{\partial t}$. The horizontal distribution, which we denote by $\hhh$, is generated by $X_1,\ldots, X_n,Y_1,\ldots,Y_n$. 
The first one is the lack of an appropriate monotonicity formula \cite{MR2644313}. 
The second one is the existence of not flat-minimal cones, both smooth  \cite{MR2983199,SGR,MR2435652} and with low regularity \cite{MR2455341,MR2448649}.
%\color{blue} By the way some numerical evidences  suggests that we cannot expect the monotonicity formula (cf. \cite{DGNpreprint}).  \color{black} 
%, indeed it works only for graphical strips  \cite{MR2644313}. 

\medskip
Nevertheless, by means of \emph{ad hoc} techniques, the Bernstein problem in $\hh^1$ is well-understood. In this setting, rigidity of a (sufficiently smooth) hypersurface $S$ is nevertheless conditioned by the presence of the so-called \emph{characteristic points}, i.e. those points $p\in S$ for which the tangent space $T_pS$ coincides with the horizontal distribution $\hhh_p$. 
%The smooth non-flat minimal cones in $\hh^1$ are congruent to the hyperbolic paraboloid $t=xy$ and they contains a line of singular point.
Danielli, Garofalo, Nhieu and Pauls \cite{MR2648078} and Hurtado, Ritoré and Rosales \cite{MR2609016}) (cf. \cite{MR2165405,MR2983199,MR2435652,MR2472175}) proved that a complete, two-sided, connected, stable $C^2$-surface in $\hh^1$ is a \emph{vertical plane} (without characteristic points), a \emph{horizontal plane} (with one characteristic point) or the \emph{hyperbolic paraboloid} $t=x_1y_1$ (with a line of characteristic points). A \emph{vertical hyperplane} in $\hh^n$ is an Euclidean hyperplane which is parallel to the $t$-axis. It is identified in \cite{MR4923606} as the natural flat object, due to the vanishing of its sub-Riemannian second fundamental form.
The hyperbolic paraboloid, which is a cone for the intrinsic geometry of $\hn$, can be easily lifted to provide smooth, non-flat, minimal, characteristic cones 
\begin{equation}\label{eofjnt4rvjntrgvr22222}
    \{(x,y,t)\in\hh^n\,:\,t=\langle x, y\rangle\}.
\end{equation}
for any $n\geq 2$ \cite{MR4923606}. A relevant class of surfaces without characteristic points, from now on \emph{non-characteristic}, is that of \emph{intrinsic graphs} as introduced by Franchi, Serapioni and Serra Cassano in \cite{MR1871966}.
In this setting, Barone Adesi, Serra Cassano and Vittone \cite{MR2333095} had previously shown that entire, stable intrinsic graphs associated to a $C^2$-function are vertical planes. Stability plays a decisive role, because Danielli, Garofalo and Nhieu exhibited examples of entire, minimal, unstable intrinsic graphs \cite{MR2405158}. In the non-characteristic setting, the same conclusion was achieved by Galli and Ritoré \cite{MR3406514} in the class of non-characteristic, complete, two-sided, connected, stable $C^1$-surfaces. The $C^1$-regularity assumption was later improved to Euclidean Lipschitz regularity by Nicolussi Golo and Serra Cassano \cite{MR3984100} and by the first author and Ritoré \cite{GR24}, and to Euclidean Sobolev regularity by Nicolussi Golo, Serra Cassano and Vedovato \cite{golo2025bernsteinproblemsobolevintrinsic}. The best possible regularity to guarantee the above rigidity in $\hh^1$ is still an open problem \cite{MR4433085}.
%, while the smoothness of Lipschitz minimal intrinsic graphs in $\hh^n$ with $n\ge2$ is given by \cite{MR2774306}. For this reason we are not interested in the regularity issue.
%, notice that even in the case of intrinsic graphs the stability is necessary since the area functional is not convex such as in the Euclidean setting and there are area-stationary graphs that are not area minimizing \cite{MR2405158}. 

\medskip
Besides the above-mentioned characteristic counterexamples, very little is known in the higher dimensional case. 
In \cite{MR2333095}, the authors provide a negative answer for $n\geq 5$ in the class of entire, smooth, non-characteristic intrinsic graphs, by lifting the Euclidean graphical counterexamples available in $\rr^{n+1}$ when $n\geq 8$. However, the purely Euclidean character of these counterexamples suggested the non-optimality on the dimensional threshold $n\geq 5$. 
The only partial positive result, obtained by the authors and Pinamonti in \cite{simons}, classifies complete, minimal, stable, non-characteristic hypersurfaces in $\hh^2$. The rigidity statement of \cite{simons} relies however on three assumptions, providing some \emph{a priori} structure on both the second fundamental form $ h^\hhh$ and on the \emph{fundamental function} $\alpha$ (cf. \Cref{subsechypersurf}). Namely, rigidity holds in $\hh^2$ provided that  
\begin{align}
  h^\hhh(J(\vh),\B)&=0\quad\text{for every }\B\in\Gamma(\hhh'TS),\label{eq:P1}\tag{$\mathrm{P}1$}\\
 \B\alpha&=0\quad\text{for every }\B\in\Gamma(\hhh'TS),\label{eq:P2}\tag{$\mathrm{P}2$}\\
J(\nu)\alpha+\alpha^2&\geq 0,\label{eq:P3}\tag{$\mathrm{P}3$}
\end{align}
as well as
\begin{equation}\label{updesubintro}
     \sigma^\hhh(S\cap B_r(p))=O\left(r^{2n+1}\right)\qquad\text{as }r\to\infty\qquad\text{for some $p\in S$}.
\end{equation}
We refer to \Cref{sec_prel} for the relevant definitions. Condition \eqref{updesubintro} is natural, because it is satisfied by boundaries of perimeter minimizers. Therefore, an important open problem left by \cite{simons} consists in understanding wether rigidity holds beyond \eqref{eq:P1}, \eqref{eq:P2} and \eqref{eq:P3}, and how such properties are related to minimality.

\medskip
In this paper, we answer both questions, exhibiting smooth, non-characteristic counterexamples to rigidity, and hence providing a definitive negative answer to the higher dimensional Bernstein problem.

\begin{theorem} \label{thm_no_rigidity}
    For every $n\geq 2$, there exists $S\subseteq\hh^n$ satisfying the following properties.     
    \begin{enumerate}
        \item $S$ is a smooth, connected, embedded, two-sided, complete hypersurface without boundary;
        \item $S$ is non-characteristic;
        \item $S$ is the boundary of a global perimeter minimizer. In particular, $S$ is minimal, stable and satisfies the area growth conditions \eqref{updesubintro} for every $p\in S$.
        \item $S$ is an entire, smooth intrinsic graph along $X_1$;
        \item $S$ is not a vertical hyperplane;
        \item $S$ does not verify any of \eqref{eq:P1}, \eqref{eq:P2}, \eqref{eq:P3}.
    \end{enumerate}
\end{theorem}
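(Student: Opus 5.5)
The plan is to build $S$ by a calibration argument applied to an explicit intrinsic graph. I would look for $S=\{(0,\hat y,t)\cdot s X_1 : s=\phi(\hat y,t)\}$, with $\phi$ smooth and entire on the vertical hyperplane $\{x_1=0\}\cong\rr^{2n}$. In coordinates it is convenient to write $\hat y=(x_2,\dots,x_n,y_1,\dots,y_n)$. The main device is the classical sub-Riemannian calibration: if a smooth horizontal unit vector field $\nu$, defined on all of $\hh^n$, satisfies $\divv \nu=0$ and agrees with the horizontal unit normal $\nu_S$ along $S$, then the subgraph $E=\{(0,\hat y,t)\cdot sX_1: s<\phi\}$ is a global perimeter minimizer. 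This follows from the divergence theorem, since $\langle\nu,\nu_{\partial F}\rangle\le 1$ for any competitor $F$ that coincides with $E$ outside a compact set.

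To get such a $\nu$, I would take $\phi$ to be invariant along the flow of $X_1$, so that all translates $S_c=S\cdot cX_1$ foliate $\hh^n$. The field $\nu$ is then the unit normal of the leaf through each point. Since $X_1$ is a left-invariant field generated by right translations, right translation is not an isometry. The right approach is therefore to left-translate by $\exp(cX_1)$ and to choose $\phi$ so that the left translates $(cX_1)\cdot S$ are again intrinsic graphs along $X_1$ and foliate the space. Each leaf is minimal by left invariance, so $\divv\nu=0$. This reduces the problem to finding a single entire minimal intrinsic graph along $X_1$ whose left $X_1$-translates are disjoint and fill $\hh^n$.

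The cleanest candidate uses the extra variables available when $n\ge2$. I would take $\phi$ depending only on $(x_2,y_2)$ (and possibly $t$), chosen so that the intrinsic minimal surface equation $\Delta^\phi\phi$-type PDE (the divergence of $\nabla^\phi\phi/\sqrt{1+|\nabla^\phi\phi|^2}$ in the Burgers-type intrinsic gradient) reduces to an ODE or to a Euclidean minimal graph equation in two variables. The model example is $\phi=f(x_2,y_2)$ with $f$ a nonlinear function such as $f(x_2,y_2)=x_2y_2$ or an affine-in-$y_2$ function times $x_2$. The intrinsic gradient involves $\partial_{y_1}+\phi\partial_t$, which vanishes on such $\phi$, so the equation becomes the sub-Riemannian minimal surface equation in the remaining horizontal directions. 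Because $t$ enters through the $X_2,Y_2$ fields, a bilinear choice still yields a nonzero second fundamental form. I would pick $f$ so that $\partial_{x_2}f$, $\partial_{y_2}f$ are unbounded and the surface is therefore not a vertical hyperplane, which gives item (5). Smoothness, entireness, completeness, two-sidedness and non-characteristicity (items (1), (2), (4)) are then automatic for an entire smooth intrinsic graph. Item (3) follows from the calibration, and the area bound follows by comparing with $\partial E\cup\partial B_r$.

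The main obstacle is finding $\phi$ that simultaneously solves the intrinsic minimal surface equation, is not affine, and has leaves that are disjoint, which is needed for $\nu$ to be globally defined. I would first try solutions constant along $Y_1$ and depending on the variables of the second Heisenberg factor in a way that mimics the $\hh^1$ graph $t=xy$, lifted to a non-characteristic position by shearing in $x_1$. I would then check monotonicity of $c\mapsto (cX_1)\cdot S$ directly, which is plausible since the graph is over a complementary subgroup. Finally, item (6) is a direct computation: evaluate $h^\hhh(J(\nu),\B)$, $\B\alpha$ and $J(\nu)\alpha+\alpha^2$ on the explicit $\phi$ at one suitable point, and show that each of these is nonzero (respectively negative) there.
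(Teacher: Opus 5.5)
There is a genuine gap. You reduce the theorem to the existence of an entire, non-affine, minimal intrinsic $X_1$-graph whose left $X_1$-translates foliate $\hh^n$, but you never produce one, and the concrete reduction you propose cannot work.

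If $\phi=f(x_2,y_2)$, or more generally if $\phi$ does not depend on $t$, then the Burgers term $\phi\,\partial_t\phi$ vanishes. All the fields $X_j,Y_j$ then act on $\phi$, and on the components of $\nabla^\phi\phi/\sqrt{1+|\nabla^\phi\phi|^2}$, as ordinary partial derivatives, so $t$ does not enter at all. The intrinsic minimal surface equation is then exactly the Euclidean minimal surface equation for $f$ on $\rr^2$ (respectively on $\rr^{2n-1}$). By Bernstein's theorem $f$ is affine; on $\rr^{2n-1}$ this holds for every $n\le 4$. Hence $S=\{x_1=f\}$ is a vertical hyperplane, contradicting item (5).

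Your bilinear candidate is not even a solution: for $f=x_2y_2$ the minimal surface operator equals $-2x_2y_2$. This obstruction is exactly why the lifting of Barone Adesi, Serra Cassano and Vittone only works for $n\ge 5$.

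So for $n=2,3,4$ genuine $t$-dependence is unavoidable, and then your foliation step becomes a real constraint. With the paper's group law, $\exp(cX_1)\cdot S$ is the intrinsic graph of $\phi_c(\hat x,y,\tau)=\phi(\hat x,y,\tau+2cy_1)+c$. The translates are pairwise disjoint only if every map $c\mapsto\phi_c(w)$ is injective, i.e.\ only if $1+2y_1\,\partial_t\phi$ does not change sign along these lines. Minimality does not give this: the entire minimal but unstable intrinsic graphs of Danielli, Garofalo and Nhieu cannot be calibrated at all. You give no mechanism to enforce it, so items (3), (5) and (6) remain unproved, and (6) has no explicit $\phi$ to compute on.

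The missing idea is to calibrate with \emph{vertical} translations instead. The paper takes the $t$-graph $S=\{t=\langle x,y\rangle+u(x)\}$. Vertical translations are left (and right) translations along the central direction $T$, hence isometries, and they automatically foliate $\hh^n$ by minimal $t$-graphs. So minimality alone makes $S$ the boundary of a global perimeter minimizer.

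Minimality then reduces to $\divv\bigl(Du/\sqrt{|Du|^2+4|x|^2}\bigr)=0$ on $\rr^n$. Non-characteristicity and the intrinsic-graph property both follow from $X_1G=u_1\ge 1$, where $G=\langle x,y\rangle+u(x)-t$. The actual work is proving that such a $u$ exists, via:
\begin{itemize}
\item Riemannian approximation;
\item barriers built on the exponent $\gamma$;
\item $\eps$-uniform gradient estimates away from the origin;
\item a maximum-principle argument giving $u_1\ge 1$;
\item regularity at $0$.
\end{itemize}

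Note that your scheme could not even certify this example. Indeed $\exp(cX_1)\cdot S=\{t=\langle x,y\rangle-2cy_1+u(x-ce_1)\}$, which meets $S$ at every point with $y_1=\frac{u(x-ce_1)-u(x)}{2c}$, for every $c\neq 0$.
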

The counterexamples provided by \Cref{thm_no_rigidity} can be constructed starting from the following observation: the reason why the hypersurface $S$ in \eqref{eofjnt4rvjntrgvr22222} does not provide an appropriate counterexample to rigidity is that $S$, although bounding a global perimeter minimizer,  \emph{does} have characteristic points. Therefore, the idea is to look for suitable perturbations of \eqref{eofjnt4rvjntrgvr22222} which are capable of removing its characteristic points, while of course preserving minimality. The natural choice is to consider $t$-graphs of the form 
\begin{equation}\label{eofjnt4rvjntrgvr}
   S=\{(x,y,t)\in\hh^n\,:\,t=\langle x, y\rangle+u(x)\},
\end{equation}
for a suitable function $u\in C^\infty(\rr^n)$. The fact that $u$ is independent of $x$ is helpful both in simplifying the expression of the mean curvature of $S$ and in detecting its characteristic points. Indeed, by elementary computations carried out in \Cref{sec_prel}, and denoting by $H^\hhh$ the (horizontal) mean curvature, then
\begin{equation*}
    H^\hhh=-\divv\left(\frac{Du}{\sqrt{|Du|^2+4|x|^2}}\right),\qquad  S_0=\{(x,y,t)\in S\,:\,x=0,\,Du(x)=0\}.
\end{equation*}
Therefore, since entire minimal $t$-graphs are automatically boundaries of global perimeter minimizers, we reduced the problem to finding a smooth solution, with non-vanishing gradient, to the above $n$-dimensional equation. Such solution is provided by the following result. 
\begin{theorem}\label{thm_main_1}
For every $n\geq 2$, there exists $u\in C^\infty(\rr^n)$ such that
\begin{equation}\label{eq:target}
 \divv\left(\frac{Du}{\sqrt{|Du|^2+4|x|^2}}\right)=0,
 \qquad \frac{\partial u}{\partial x_1}\geq 1\qquad\text{on }\rr^n.
\end{equation}
Moreover, $u$ is odd in $x_1$ and invariant under orthogonal transformations of $x_2,\ldots,x_n$.
%Writing $x=(x_1,z)$, the function is odd in $x_1$ and invariant under $O(n-1)$
%in the variable $z$.
\end{theorem}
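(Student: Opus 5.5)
The plan is to obtain $u$ as a limit of solutions of Dirichlet problems for the convex functional
\[
\mathcal F_R(v)=\int_{B_R}\sqrt{|Dv|^2+4|x|^2}\,dx ,
\]
whose Euler--Lagrange equation is exactly \eqref{eq:target}. The first point is a scaling invariance. If $u$ solves \eqref{eq:target}, then so does $u_\lambda(x)=\lambda^{-2}u(\lambda x)$, because the ratio $Du/\sqrt{|Du|^2+4|x|^2}$ is invariant under this scaling. The constraint $\partial_{x_1}u\geq 1$, however, is not scaling invariant. The construction must therefore carry a genuine non-homogeneous lower-order term, and $u(x)=x_1+(\text{correction})$ is the natural ansatz.

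First I impose the symmetry from the start. Write $x=(x_1,z)$ with $r=|z|$ and look for $u=u(x_1,r)$ odd in $x_1$. The equation then becomes a quasilinear equation in the half-plane $\{r>0\}$ with weight $r^{n-2}$:
\[
\partial_{x_1}\Bigl(\tfrac{u_{x_1}}{W}\Bigr)+r^{2-n}\partial_r\Bigl(r^{n-2}\tfrac{u_r}{W}\Bigr)=0,\qquad W=\sqrt{u_{x_1}^2+u_r^2+4(x_1^2+r^2)} .
\]
On $B_R$ I take boundary datum $g(x)=x_1$ (odd, $O(n-1)$-invariant) and minimize $\mathcal F_R$ in $g+W^{1,1}_0$. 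The functional is convex and strictly convex in $p$ wherever $(x,p)\neq 0$, so the minimizer $u_R$ is unique. By uniqueness it inherits the symmetries of $g$ and of the weight $|x|$. Since $|Du_R|\geq \partial_{x_1}u_R$, a lower bound $\partial_{x_1}u_R\geq 1$ makes the equation uniformly elliptic on compact sets, and Schauder theory gives smoothness.

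The key step, and the one I expect to be the main obstacle, is the estimate $\partial_{x_1}u_R\geq 1$, uniform in $R$. Differentiating the equation in $x_1$ shows that $v=\partial_{x_1}u_R$ solves the linear divergence-form equation
\[
\partial_i\bigl(a_{ij}\,\partial_j v\bigr)=\partial_i\Bigl(\tfrac{4x_1\,\partial_i u_R}{W^3}\Bigr),
\qquad a_{ij}=\tfrac{\delta_{ij}}{W}-\tfrac{\partial_i u_R\,\partial_j u_R}{W^3}.
\]
The right-hand side comes from the explicit $x$-dependence of the weight. My first attempt will be to control its sign by exploiting the oddness in $x_1$: the source term is even in $x_1$, so it suffices to understand it on $\{x_1\geq 0\}$. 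I would then combine this with a comparison of $u_R$ against the explicit subsolution and supersolution $x_1\pm c\,x_1|x|^2$, which are built from the scaling above, to produce a one-sided barrier for $v$ on the boundary and on $\{x_1=0\}$. If a direct maximum principle for $v-1$ fails, the fallback is to change the boundary datum to $g=x_1+\kappa x_1^3$ (or $x_1(1+|x|^2)$, adapted to the quadratic growth suggested by the scaling) so that the extra convexity in $x_1$ dominates the source term.

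Once the uniform bound holds, the remaining steps are standard. Local gradient bounds follow from comparison with the translated/scaled barriers. The lower bound $v\geq1$ keeps the equation locally uniformly elliptic, so interior Schauder estimates are uniform in $R$. Normalizing by oddness ($u_R=0$ on $\{x_1=0\}$), Arzel\`a--Ascoli gives a subsequence $u_R\to u$ in $C^\infty_{loc}$. The limit $u$ solves \eqref{eq:target}, satisfies $\partial_{x_1}u\geq1$, and retains oddness in $x_1$ and $O(n-1)$-invariance in $z$, as claimed in \Cref{thm_main_1}.
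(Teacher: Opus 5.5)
\textbf{The central step is false as stated.} With boundary datum $g=x_1$, the bound $\partial_{x_1}u_R\geq 1$ fails for every $R$. The function $x_1$ is a strict supersolution on $\{x_1>0\}$, since $\divv\bigl(e_1/\sqrt{1+4|x|^2}\bigr)=-4x_1(1+4|x|^2)^{-3/2}<0$. Moreover $u_R=x_1$ on the whole boundary of the half-ball $B_R\cap\{x_1>0\}$, on the flat part by oddness. Comparison therefore gives $u_R\leq x_1$ there. On the other hand, $\partial_{x_1}u_R\geq1$ together with $u_R(0,z)=0$ gives $u_R\geq x_1$. Hence $u_R\equiv x_1$, which is not a solution.

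The underlying reason is your own scaling observation. Under $u\mapsto\lambda^{-2}u(\lambda x)$, slope $K$ on $\partial B_R$ becomes slope $K/R$ on $\partial B_1$. The regular solutions of the form $x_1\phi(|x|)$ of the linearization $\divv(|x|^{-1}Dw)=0$ are multiples of $x_1|x|^{2\gamma}$, with $\gamma$ as in \eqref{defigammaecn}. Comparing with the rescaled barriers $\ell^2U(x/\ell)$ and $\lambda^2u_-(x/\lambda)$ of the paper, the solutions collapse to $0$ locally uniformly if $K(R)=o(R^{2\gamma})$. They blow up at every point with $x_1>0$ if $K(R)\gg R^{2\gamma}$.

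Your fallbacks do not land in the right regime either:
\begin{itemize}
\item $g=x_1(1+|x|^2)$ has slope $1+R^2$ on $\partial B_R$. This is the blow-up regime, and it destroys the height bounds needed for compactness.
\item $x_1+\kappa x_1^3$ has degree-one spherical component of order $\kappa R^2x_1$ on $\partial B_R$. Heuristically it is likewise supercritical, since only the degree-one part matters in the interior.
\item Barriers like $x_1\pm c\,x_1|x|^2$ have the wrong growth.
\end{itemize}
The missing idea is the scaling-critical slope $K(R)=C_n(1+R^2)^\gamma$. It must be paired with a subsolution $u_-\geq x_1$ and a supersolution $u_+\lesssim x_1(1+|x|^2)^\gamma$ of exactly this growth; their verification is delicate, especially for $n=2$. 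These give $x_1\leq u\leq Cx_1(1+|x|^2)^\gamma$ uniformly in $R$ and $\eps$.

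Even then, $u_1\geq1$ does not come from a maximum principle for $v=u_1$. The source term in your differentiated equation has no sign; in the paper's notation, $\Le u_1=4u_1-8x_1\Delta u$ with $\Delta u$ unsigned, and oddness does not fix this. The paper instead proves $x_1u_1-u\geq0$ on $\{x_1\geq0\}$ through the linearized operator. This needs the auxiliary signs $x_iu_1-x_1u_i>0$, $\langle x,Du\rangle\geq0$ and $u_r\geq0$ first. It then concludes from $x_1u_1\geq u\geq x_1$.

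\textbf{A second gap is at the origin.} A lower bound on $\partial_{x_1}u$ does not make the equation uniformly elliptic near $x=0$. The coefficient matrix $W^{-1}\bigl(I-W^{-2}Du\otimes Du\bigr)$ has eigenvalue $4|x|^2W^{-3}$ in the direction of $Du$, and this vanishes at $x=0$ however large $|Du|$ is. So Schauder theory gives neither smoothness at $0$ nor $R$-uniform estimates there, and your $C^\infty_{loc}$ compactness breaks down at the origin. Likewise, the Dirichlet problem for the degenerate, linear-growth functional $\mathcal F_R$ is not covered by standard theory. Translated barriers are also unavailable, because the equation depends explicitly on $x$.

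The paper handles all of this in four steps:
\begin{enumerate}
\item It works with the regularized operator $\Qe$, which is uniformly elliptic on bounded sets for each $\eps>0$, and solves the Dirichlet problem by Leray--Schauder.
\item It gets $\eps$- and $R$-uniform interior gradient bounds away from $0$ by a Korevaar-type argument built on the Jacobi equation for the vertical component of the normal.
\item It carries these bounds into a neighbourhood of $0$ with the gradient maximum principle $\max_{\overline B_\varrho}|Du|^2\leq\max\{\max_{\partial B_\varrho}|Du|^2,4\varrho^2\}$.
\item It obtains smoothness at the origin from the sub-Riemannian regularity theory of \cite{MR2774306}. This is applied to the lifted $t$-graphs $t=\langle x,y\rangle+u^i(x)$, using the uniform bound $X_1G^i=\partial_{x_1}u^i\geq1$.
\end{enumerate}
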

We point out that the positive lower bound on the first partial derivative of $u$ not only guarantees that $S$ is non-characteristic, but ensures in addition that $S$ is an entire, smooth intrinsic graph. 

\medskip
The proof of \Cref{thm_main_1} is based on a compactness argument relying on a well-established Riemannian approximation technique, and can be summarized in the following four steps.
\begin{enumerate}
    \item First, we solve the Dirichlet problem associated with an appropriate regularization of \eqref{eq:target}, namely
\begin{equation}\label{rtfojtrgvojtgvntrjgn}
 \divv\left(\frac{Du}{\sqrt{|Du|^2+4|x|^2+\eps^2}}\right)=0\quad\text{on }B_R,
 \qquad u=Kx_1\quad\text{on }\partial B_R.
\end{equation}
Here $B_R$ is the standard Euclidean ball, $\eps\in(0,1]$ is the regularization parameter, and $K$ is a positive constant. The choice of the boundary contrition is tailored to provide the relevant positive lower bound on the first partial derivative.
\item Then, we provide local $C^1$-estimates for solutions to \eqref{rtfojtrgvojtgvntrjgn} which are independent of $R\ge 0$, $\eps\in(0,1]$. Precisely, height estimates are established by exhibiting suitable uniform barriers,  which are available provided that a precise choice of $K$ is made. Instead, gradient estimates follows combining a gradient maximum principle with interior gradient estimates, in the style of \cite{MR5029815}, away from the origin. 
\item A careful (but elementary) analysis of the linearized operator associated with \eqref{rtfojtrgvojtgvntrjgn} allows to provide a uniform positive lower bound for the first partial derivative of solutions to \eqref{rtfojtrgvojtgvntrjgn}.
\item Finally, we let $R\to\infty$ and $\eps\to 0$. By a compactness argument, we find a locally Lipschitz continuous limit of solutions to \eqref{rtfojtrgvojtgvntrjgn}, say $u$. By standard regularity theory for elliptic equations, the regularity of $u$ is improved outside the origin. Accordingly, $u$ inherits from the approximating sequence the positive lower bound on its first partial derivative. In turn, such property allows to exploit the regularity results of \cite{MR2774306} to show that $u$ is smooth at the origin, and eventually that it solves \eqref{eq:target}.
\end{enumerate}
We point out that solutions to \eqref{eq:target} are not available when $n=1$. Indeed, it is easy to see that 
 \[
    \left(\frac{u'}{\sqrt{4x^2+(u')^2}}\right)'=0\qquad \text{in }\rr
    \]
   does not admit entire solutions of class $C^2$ and such that $u'\geq 1$. Because of the positive answers to the Bernstein problem available in $\hh^1$, this observation is therefore not surprising. 
   \subsection*{Acknowledgments}
The authors are deeply grateful to L. Capogna, G. Citti, M. Fogagnolo, R. Monti, A. Pinamonti,
J. Pozuelo, M. Ritoré, F. Serra Cassano, and D. Vittone for many years
of stimulating discussions about this problem and for their constant
interest, encouragement, and support.

\subsection*{AI usage statement} 
The authors acknowledge the use ChatGPT-6 Astra, in the following way. 
Initially, the authors asked ChatGPT-6 Astra to look for relations occurring between minimality and \eqref{eq:P1}. Precisely, the authors gave \cite{simons} as input, asking ChatGPT-6 Astra to focus on \cite[Example 3.8]{simons}, namely \eqref{eofjnt4rvjntrgvr22222} of this paper, as an instance of characteristic minimal hypersurface satisfying \eqref{eq:P1}, and asked it try to understand how a perturbation as in \eqref{eofjnt4rvjntrgvr} would have affected either minimality or \eqref{eq:P1}. After a long series of interactions, ChatGPT-6 Astra noticed that a solution to \eqref{eq:target}, because of its derivative bound, would have produced a minimal non-characteristic hypersurface which would have not satisfied \eqref{eq:P1}. Incidentally, the authors realized that such a construction would have provided a counterexample to the higher dimensional Bernstein problem. Subsequently, the authors implemented the Riemannian approximation scheme. To this aim, the contribution of ChatGPT-6 Astra have been clearly crucial in detecting the barriers in \Cref{erfnjtrvijng}, whose computations have been additionally double-checked with the aid of Claude. Moreover, ChatGPT-6 Astra helped in sketching the monotonicity argument in \Cref{fvrtg5ytbgtygvtr}. 
The paper has been entirely written and checked by the authors, who takes full
responsibility for its content.
%\begin{remark}
  %  Notice that unique $C^2$ solution $u:\rr \to \rr$ of the problem 
  % 
  %  is $u=c$ where $c$ in $\rr$.
  %  
  %  Indeed we have that there exist $k$ in $\rr$ such that 
  %  \[
  %  u'(x)=k \sqrt{4x^2+(u')^2}
   %% \]
  %  Then we get that $0<|k|<1$ and 
  %  \[
  %  u'(x)=4\frac{|k|}{\sqrt{1-|k|^2}} |x|
  %  \]
 %   Then setting $a= 4\frac{|k|}{\sqrt{1-|k|^2}}$ we get 
 %   \[
 %   u(x)=ax|x|+ c, \qquad a,b \in \rr.
   % \]
 %   If we require $u$ to be $C^2$ then %$a=0$ and all the solutions are $u(x)=c$ for $c \in \rr$.
%\end{remark}
\section{Preliminaries}\label{sec_prel}
\subsection{Heisenberg groups}\label{subsec_hg} Fix $n\geq 1$. The $n$-th Heisenberg group $(\hn,\cdot)$ is $\mathbb R^{2n+1}$ with the group law
\begin{equation}
\label{eq:Hproduct}
    p\cdot p'=( x, y,t)\cdot (x',y',t')=\left( x+ x', y+ y', t+t'+\sum_{j=1}^n\left(x_j'y_j-x_jy_j'\right)\right),
\end{equation}
where $p=(x,y,t)=(x_1,\ldots,x_n,y_1,\ldots,y_n,t)$.
This group law realizes $\hn$ as stratified Lie group. Its \emph{horizontal distribution} $\hhh$ is generated by the left-invariant vector fields
\begin{equation*}
    Z_i=X_i=\frac{\partial}{\partial x_i}+y_i\frac{\partial}{\partial t},\qquad Z_{n+i}=Y_i=\frac{\partial}{\partial y_i}-x_i\frac{\partial}{\partial t},\qquad i=1,\ldots,n.
\end{equation*}
 A vector field which is tangent to $\hhh$ at every point is called \emph{horizontal}.
According to the above definitions, $(x,y,t)$ are \emph{exponential coordinates of the first kind}, namely
\begin{equation*}
    (x_1,\ldots,x_n,y_1,\ldots,y_n,t)=\mathrm{exp}\left(\sum_{i=1}^nx_i X_i+\sum_{i=1}^n y_i Y_i+tT\right).
\end{equation*}
%Accordingly, the \emph{horizontal gradient} of a sufficiently regular function is the horizontal vector field defined by 
 %\begin{equation*}
 %    \nabla^\hhh f=\sum_{i=1}^{2n}Z_ifZ_i.
% \end{equation*}
 Setting $Z_{2n+1}=T=\frac{\partial}{\partial t}$, then $
Z_1,\ldots,Z_{2n+1}
$ is a global frame of left-invariant vector fields. 
The only nontrivial commutation relations among $Z_1,\ldots,Z_{2n+1}$ are
\begin{equation*}
    [X_i,Y_i]=-2T,\qquad i=1,\ldots,n.
\end{equation*}
The \emph{complex structure} $J:\Gamma(T\hh^n)\to\Gamma(T\hh^n)$ is the unique $C^\infty(\hh^n)$-linear map which satisfies
\begin{equation*}
    J(X_i)=Y_i,\qquad J(Y_i)=-X_i\qquad\text{and}\qquad J(T)=0,\qquad i=1,\ldots,n.
\end{equation*}
 The triple $(\hh^n,\hhh,J)$ is a prototypical \emph{pseudohermitian manifold} (cf. \cite[Appendix]{MR2165405}). 
For any $\eps>0$, we denote by $\left\langle\cdot,\cdot\right\rangle_\eps$ the unique Riemannian metric on $\hh^n$ such that $(X_1,\ldots,X_n,Y_1,\ldots,Y_n,\eps T)$ is a global orthonormal frame.  Denote by $\ \ric^\eps $ the induced Ricci tensor. We may let $\langle\cdot,\cdot\rangle_1=\langle\cdot,\cdot\rangle$. Restricting any of the above metrics to $\hhh$ endows $\hn$ with a sub-Riemannian structure $\left(\hh^n,\hhh,\langle\cdot,\cdot\rangle|_\hhh\right)$. 
%We denote the Riemannian gradient by 
%\begin{equation*}
%    \nabla f=\sum_{i=1}^{2n+1}Z_i f Z_i.
%\end{equation*}
 The \emph{pseudohermitian connection} $\nabla$ (cf. \cite{MR4193432}) is the unique metric connection with torsion 
\begin{equation*}\label{pseudotorsion}
        \nabla_\A\B-\nabla_\B \A-[\A,\B]=2\langle J(\A),\B\rangle T,\qquad \A,\B\in\Gamma(T\hh^n).
\end{equation*}
% We recall that $\nabla$ vanishes along left-invariant vector fields (cf. \cite{MR2898770}), i.e.
%\begin{equation*}\label{phflat}
%    \nabla Z_i=0,\qquad i=1,\ldots,2n+1.
%\end{equation*}
In addition, $\hh^n$ carries a \emph{homogeneous structure} provided by \emph{intrinsic dilations} (cf. ~\cite{MR2363343}). 
Namely, we set 
\begin{equation*}
\delta_\lambda(x,y,t) = (\lambda x,\lambda y,\lambda^2 t) \qquad \text{for every $\lambda \geq 0$, $(x,y,t) \in\hh^n$}.
\end{equation*}
In this way, $\delta_\lambda$ is a Lie group isomorphism of $\hh^n$ for any $\lambda>0$. The Riemannian volume induced by $\langle\cdot,\cdot\rangle$ is the Haar measure of the group, i.e. the standard Lebesgue measure.
It satisfies the homogeneity condition
\begin{equation*} \label{eq_dilation_Lebesgue}
|\delta_\lambda(E)|=\lambda^Q|E| \qquad \text{for every $E \subseteq\hh^n$ measurable, $\lambda \geq 0$,}
\end{equation*}
where $Q\coloneqq 2n+2$ is known as \emph{homogeneous dimension} of $(\hh^n,\cdot)$ (cf. \cite{MR3587666}). 
%Therefore, the Riemannian divergence induced by $\left\langle\cdot,\cdot\right\rangle$ is the Euclidean divergence, and can be computed by 
%\begin{equation}\label{divergenza4567}
 %   \divv\A=\divv\left(\sum_{i=1}^{2n+1}A^iZ_i\right)=\sum_{i=1}^{2n+1}Z_iA^i,\qquad \A\in\Gamma(T\hh^n).
%\end{equation}
\subsection{Perimeter and perimeter minimizers} Let $\Om\subseteq\hh^n$ be an open set. If $E\subseteq \hn$ is measurable, its \emph{(horizontal) perimeter} in $\Om$ \cite{MR1871966, MR1984849,MR1404326}) is defined by
\begin{equation*}
    P_{\mathbb H}(E,\Om):=\sup\left\{\int_E\divv(\A)\,d\mathcal{L}^{2n+1}\,:\,\A\in \Gamma_c(\Om,\hhh),\,\langle\A,\A\rangle\leq 1\right\},
\end{equation*}
where $\Gamma_c(\Om,\mathcal{H})$ is the class of horizontal vector fields which are compactly supported in $\Om$. We say that $E$ has locally finite perimeter if  $P_{\mathbb H}(E,\Om)<+\infty$ whenever $\Om$ is bounded. Finally, we recall \cite{MR3587666} that an locally finite perimeter set $E$ is an \emph{global perimeter minimizer} whenever
\begin{equation*}
    P_{\mathbb H}(E,\Om)\leq P_{\mathbb H}(F,\Om)
\end{equation*}
for any $\Om \Subset\hn$ and for any $\mathbb H$-Caccioppoli set $F$ such that $E\Delta F\Subset\Om$. 
\subsection{Hypersurfaces}\label{subsechypersurf}  Let $S\subseteq\hh^n$ be a smooth, connected, embedded, two-sided hypersurface without boundary. Recall that $p\in S$ is called a \emph{characteristic point} if $\hhh_p=T_p S$. The set of characteristic points of $S$ is denoted by $S_0$. If $S_0=\emptyset$, we say that $S$ is non-characteristic. At non-characteristic points, the \emph{horizontal tangent space} $\hhh TS$ is the smooth, $(2n-1)$-dimensional distribution defined by 
\begin{equation*}
    \hhh T_p S=\hhh_p\cap T_p S,\qquad p\in S\setminus S_0.
\end{equation*}
When $n\geq 2$ and $S$ is non-characteristic, \cite{MR4103357} grants that the notion of completeness can be equivalently given by the restriction of the ambient metric on $S$ or by the intrinsic metric of $S$, provided that the latter are induced by the Euclidean, Riemannian or sub-Riemannian structure of $\hh^n$. Therefore, we can talk about completeness without ambiguity.
Denote by $\n$ the Riemannian unit normal to $S$, and by $\n^\hhh$ its orthogonal projection onto $\hhh$. 
Then, the \emph{horizontal unit normal }  $$\vh=\frac{\n^\hhh}{|\n^\hhh|}$$ 
is well-defined on $S\setminus S_0$, and is the unique, up to sign, horizontal unit vector field orthogonal to $\hhh TS$. Notice that $p$ is a characteristic point if and only if $|\n^\hhh|=0$.
%Close to every non-characteristic point, it is always possible to extend $\vh$ to a full neighborhood in $\hh^n$ by setting 
  % \begin{equation}\label{normcondist}
  %  \vh=\nabla^\hhh d,
%\end{equation}
 %  where $d$ is the signed \emph{Carnot-Carathéodory distance} from $S$ (cf. \cite{simons,MR4193432}). %Indeed, locally near $p$, $d$ is smooth and satisfies the eikonal equation $|\nabla^\hhh d|=1$ . 
 %  Henceforth, $\vh$ is always extended as in \eqref{normcondist}.
   The \emph{fundamental function} $\alpha$ is defined on $S\setminus S_0$ as the unique smooth function such that 
   \begin{equation*}\label{firstdefinitionofthevecotrs}
       \s\coloneqq T-\alpha\vh\in\Gamma(TS).
   \end{equation*}
  % It is known (cf. \cite{simons,MR4923606}) that $\alpha=Td$. Moreover, if $n=1$, then \cite[Proposition 3.1]{simons} yields
  % \begin{equation}\label{nablaessevh}
   %    \nabla_\s\vh=(J(\vh)\alpha+2\alpha^2)J(\vh).
 %  \end{equation}
   Denote by $\hhh' TS$ the distribution defined by
\begin{equation*}
    \hhh'T_p S=\hhh T_pS\cap J\left(\hhh T_p S\right),\qquad p\in S\setminus S_0.
\end{equation*}
Then, $\hhh'TS$ is a $(2n-2)$-dimensional sub-bundle of $\hhh TS$, and the latter can be orthogonally decomposed as $
    \hhh TS=\hhh' TS\oplus\spann J(\vh). $
In the first Heisenberg group, $\hhh' TS=\{0\}$. 
It is easy to check that 
\begin{equation*}
    \n=\frac{1}{\sqrt{1+\alpha^2}}\vh+\frac{\alpha}{\sqrt{1+\alpha^2}}T,\qquad \alpha=\frac{\left\langle \n,T\right\rangle}{|\n^\hhh|}.
\end{equation*}
%\begin{equation}
  %  TS=\hhh' TS\oplus\spann J(\vh)\oplus\spann \s,
%\end{equation}
%where 
%\begin{equation}\label{formadiesseepsilon}
 %   \s=-\frac{\alpha}{\sqrt{1+\eps^2\alpha^2}}\vh+\frac{1}{\sqrt{1+\eps^2\alpha^2}}\eps T.
%\end{equation}
%Moreover, set
%\begin{equation*}
%    \s\coloneqq\lim_{\eps\to 0}\frac{1}{\eps}\s^\eps=-\alpha\vh+T.
%\end{equation*}
%We recall that the \emph{pseudohermitian connection} of $\hh^n$, say $\nabla$, is the unique metric connection such that
%\begin{equation*}
%    \nabla_{Z_i}Z_j=0,\qquad i,j=1,\ldots,2n+1.
%\end{equation*}
%Accordingly, if $\A,\B,\C\in\Gamma(\hhh)$, \eqref{levi_civita} implies that
%\begin{equation}\label{relazionetraleconnessioni}
%    \left\langle\nabla^\eps_\A\B,\C\right\rangle_\eps=\left\langle\nabla_\A\B,\C\right\rangle.
%\end{equation}
The \emph{horizontal second fundamental form} $h^\hhh$ and the \emph{symmetric horizontal second fundamental form} (cf. \cite{simons,MR4193432}) $\tilde h^\hhh$ are defined on $S\setminus S_0$ respectively by
\begin{equation*}
    h^\hhh(\A,\B)\coloneqq\left\langle A^\hhh(\A),\B\right\rangle,\qquad \tilde h^\hhh(\A,\B)\coloneqq\frac{h^\hhh(\A,\B)+h^\hhh(\B,\A)}{2},\qquad\A,\B\in\Gamma(\hhh TS),
\end{equation*}
 where $ A^\hhh(\A)\coloneqq \nabla_\A\vh$ is the \emph{horizontal shape operator}. The forms $h^\hhh$ and $\tilde h^\hhh$ are related (cf. \cite{simons}) by
\begin{equation*}\label{rapptrahetildacca2026}
    \tilde h^\hhh(\A,\B)=h^\hhh(\A,\B)+\alpha\left\langle J(\A),\B\right\rangle,\qquad\A,\B\in\Gamma(\hhh TS).
\end{equation*}
%For any $\eps>0$, denote by $\sigma^\eps$ the Riemannian surface measure induced by $g_\eps$. It is easy to check that 
%\begin{equation}\label{areaelementeps}
 %   \sigma^\eps=\frac{\sqrt{1+\eps^2\alpha^2}}{\eps\sqrt{1+\alpha^2}}\sigma^1.
%\end{equation}
The \emph{horizontal mean curvature} is then defined on $S\setminus S_0$ by
\begin{equation*}
    H^\hhh=\trace h^\hhh=\trace \tilde h^\hhh.
\end{equation*}
We say that $S$ is \emph{minimal} if $H^\hhh=0$ on $S\setminus S_0$. 
Finally, the relevant sub-Riemannian surface measure $\sigma^\hhh$ is defined (cf. \cite{MR2354992, MR2262196}) by
\begin{equation}\label{areaelementorizz}
    \sigma^\hhh\coloneqq \frac{1}{\sqrt{1+\alpha^2}}\sigma,
\end{equation}
where $\sigma$ is the Riemannian surface measure induced by $\langle\cdot,\cdot\rangle$. 
%The \emph{tangent pseudohermitian connection} $\nabla^S$ is the affine connection defined on $S$ by
%\begin{equation*}
 %   \nabla^S _\A \B=\nabla_\A\B-\langle\nabla_\A\B,\vh\rangle\vh,\qquad \A,\B\in\Gamma(\hhh TS).
%\end{equation*}
%Let $\E_1,\ldots,\E_{2n-1}$ be any local orthonormal frame of $\hhh TS$. If $\varphi\in C^\infty(S\setminus S_0)$ and $\A\in\Gamma(\hhh TS)$ is supported in $S\setminus S_0$, the \emph{horizontal tangential gradient} of $\varphi$ and the  \emph{horizontal tangential divergence} of $\A$ are defined by 
%\begin{equation*}
%    \nabla ^{\hhh,S}\varphi=\sum_{i=1}^{2n-1}\left(\E_i\varphi\right)\E_i,\qquad \divv^{\hhh,S}\A\coloneqq\sum_{i=1}^{2n-1}\left\langle \nabla^S_{\E_i}\A,\E_i\right\rangle.
%\end{equation*}
%The \emph{horizontal tangential Laplacian} and the \emph{modified horizontal tangential Laplacian} are then defined by
%\begin{equation*}
 %   \Delta^{\hhh,S} \varphi=\divv^{\hhh,S}\nabla^{\hhh,S}\varphi,\qquad \hat\Delta^{\hhh,S}\varphi\coloneqq \Delta^{\hhh,S}\varphi +2\alpha J(\vh)\varphi.
%\end{equation*}
 %   Unlike $\Delta^{\hhh,S}$  (cf. \cite{MR2354992}), $\hat\Delta^{\hhh,S}$ is self-adjoint on $C^\infty_c(S\setminus S_0)$.
 In the following, if $S$ is a hypersurface as in \Cref{subsechypersurf}, we assign an upper index $\eps$ to the geometric quantities associated with $S$ and induced by $\left\langle\cdot,\cdot\right\rangle_\eps$. In particular, we denote by $\n^\eps$ its unit normal, by $h^\eps$ its second fundamental form, by $H^\eps$ its mean curvature, by $\nabla^{\eps,S}$ its gradient and by $\Delta^{\eps,S}$ its Laplace-Beltrami operator. In particular, we recall \cite{formuledivariazioneriem} that 
\begin{equation}\label{normaleriemannianaepsilon}
    \n^\eps=\frac{1}{\sqrt{1+\eps^2\alpha^2}}\vh+\frac{\eps\alpha}{\sqrt{1+\eps^2\alpha^2}}\eps T.
\end{equation}
   By \eqref{normaleriemannianaepsilon}, the Jacobi-type identity satisfied by the vertical component of the normal of a minimal (with respect to $\langle\cdot,\cdot\rangle_\eps$) hypersurface, provided by \cite[(1.5)]{MR5029815}, reads as 
    \begin{equation}\label{jacobitipeequation384969}
        -\Delta^{\eps,S}\left(\frac{\alpha}{\sqrt{1+\eps^2\alpha^2}}\right)=\Big(\left|h^\eps\right|^2+\ric^\eps(\n^\eps,\n^\eps)\Big)\left(\frac{\alpha}{\sqrt{1+\eps^2\alpha^2}}\right).
    \end{equation}
    According to \cite[Lemma 3.7]{formuledivariazioneriem}, the curvature term in \eqref{jacobitipeequation384969} admits the explicit expression
\begin{equation}\label{riemcurvaturetermspresoda3949596}
        \left|h^\eps\right|^2+\ric^\eps(\n^\eps,\n^\eps)=\frac{|\tilde h^\hhh|^2+4 J(\vh)\alpha+(2n+2)\alpha^2}{1+\eps^2\alpha^2}+\frac{2\eps^2\left|\nabla^{\hhh,S}\alpha\right|^2}{\left(1+\eps^2\alpha^2\right)^2}+\frac{\eps^4\left(\s\alpha\right)^2}{\left(1+\eps^2\alpha^2\right)^3}\qquad \text{on }S\setminus S_0.
    \end{equation}
 
\subsection{General $t$-graphs}\label{sec_tgraph}
A (smooth) \emph{$t$-graph} in $\hh^n$ is a hypersurface of the form 
\begin{equation*}
    S=\{t=f(x,y)\,:\,(x,y)\in U\},
\end{equation*}
where $U\subseteq\rr^{2n}$ is an open set and $f\in C^\infty(\Om)$. It is well-known \cite{pmc1, MR5029815} that $p=(x,y,t)\in S_0$ if and only if 
\begin{equation*}
    Df(x,y)+F(x,y)=0,\qquad F(x,y)\coloneq (-y,x).
\end{equation*}
The (upward) $\eps$-Riemannian normal and horizontal normal of $S$ are
\begin{equation*}
    \n^\eps=-\sum_{i=1}^{2n}\left(\frac{D_if+F_i}{\sqrt{|Df+F|^2+\eps^2}}\right)Z_i+\left(\frac{\eps}{\sqrt{|Df+F|^2+\eps^2}}\right)\eps T,\qquad \vh=-\sum_{i=1}^{2n}\left(\frac{D_if+F_i}{|Df+F|}\right)Z_i,
\end{equation*}
where the latter is clearly defined only when $Df+F\neq 0$.
Moreover, the Riemannian and sub-Riemannian mean curvatures of $S$ and the fundamental function read respectively as 
\begin{equation*}
    H^\eps=-\divv\left(\frac{Df+F}{\sqrt{|Df+F|^2+\eps^2}}\right),\qquad   H^\hhh=-\divv\left(\frac{Df+F}{|Df+F|}\right),\qquad\alpha=\frac{1}{|Df+F|}.
\end{equation*}
If $\psi\in C^\infty(\Om)$, we identify it as a function on $S$ by $\psi(x,y,f(x,y))=\psi(x,y)$, and we extend it vertically by $\psi(x,y,t)=\psi(x,y)$. Accordingly, defining
\begin{equation*}
    g^{ij}_\eps=\delta_{ij}-\frac{(Df+F)_i(Df+F)_j}{|Df+F|^2+\eps^2},\qquad i,j=1,\ldots,2n,
\end{equation*}
then
\cite[(3.8)]{MR5029815} yields
\begin{equation}\label{normgradtgraficogenerale}
    \left|\nabla^{\eps,S}\psi\right|^2=|D\psi|^2-\frac{\left\langle D\psi,Df+F\right\rangle^2}{|Df+F|^2+\eps^2}.
\end{equation}
    Moreover, when $H^\eps\equiv 0$, \cite[(3.8)]{MR5029815} implies
    \begin{equation}\label{laplbelttgraficogenerale}
        \Delta^{\eps,S}\psi=\sum_{i,j=1}^{2n}g^{ij}_\eps D^2\psi_{ij}-\frac{2\left\langle D\psi,J(Df+F)\right\rangle}{|Df+F|^2+\eps^2}.
    \end{equation}
  \subsection{A special class of $t$-graphs} \label{sec_specialclass} Next, we specialize the above setting when $f\in C^\infty(\Om\times\rr^n)$ has the specific form
    \begin{equation*}
        f(x,y)=\langle x,y\rangle+u(x),\qquad x\in\Om,\,y\in\rr^n,
    \end{equation*}
    where $\Om\subseteq\rr^n$ is an open set. Notice that
    \begin{equation}\label{formaspecialecoso2939469}
        Df(x,y)=\left( y+Du(x),x\right),\qquad Df(x,y)+F(x,y)=\left(Du,2x\right).
    \end{equation}
    In particular, 
    $$ |Df+F|^2=|Du|^2+4|x|^2,$$
    whence 
    \begin{equation}\label{carsetforspecialgraf}
        S_0=\{(x,y,t)\in S\,:\,x=0,\,Du(x)=0\}.
    \end{equation}
   By the above formulas, and adopting the shortcuts
    \begin{equation}
    \label{eq:Weps}
      \W\coloneqq \sqrt{|Du|^2+4|x|^2},\qquad  \We\coloneqq \sqrt{|Du|^2+4|x|^2+\eps^2},
    \end{equation}
    then
    \begin{equation}\label{normalipergraficiridotti38586}
    \n^\eps=-\sum_{i=1}^{n}\left(\frac{u_i}{\We}\right)X_i-\sum_{i=1}^{n}\left(\frac{2x_i}{\We}\right)Y_i+\left(\frac{\eps}{\We}\right)\eps T,\qquad
    \vh=-\sum_{i=1}^{n}\left(\frac{u_i}{\W}\right)X_i-\sum_{i=1}^{n}\left(\frac{2x_i}{\W}\right)Y_i,
\end{equation}
and moreover
\begin{equation}\label{espressionecurvaturamediariempergarficispeciali}
     H^\eps=-\divv\left(\frac{Du}{\sqrt{|Du|^2+4|x|^2+\eps^2}}\right),\qquad   H^\hhh=-\divv\left(\frac{Du}{\sqrt{|Du|^2+4|x|^2}}\right),\qquad \alpha=\frac{1}{\W}.
\end{equation}
Finally, if $\psi\in C^\infty(\Om)$ is extended by $\psi(x,y)=\psi(x)$, and if $\psi$ is regarded as a function over $S$, \eqref{normgradtgraficogenerale} yields
\begin{equation}\label{gradientesulgraficopercosenony38486}
    \left|\nabla^{\eps,S}\psi\right|^2=|D\psi|^2-\frac{\left\langle D\psi,Du\right\rangle^2}{\We^2}.
\end{equation}
When in addition $H^\eps=0$, \eqref{laplbelttgraficogenerale} grants
\begin{equation}\label{equazioneperlaplacianograficoridotto38486854}
    \Delta^{\eps,S}\psi=\sum_{i,j=1}^{n}g^{ij}_\eps D^2\psi_{ij}+\frac{4\left\langle D\psi,x\right\rangle}{\We^2}=\Delta\psi-\frac{\left\langle D^2 \psi\,Du,Du\right\rangle}{\We^2}+\frac{4\left\langle D\psi,x\right\rangle}{\We^2}.
\end{equation}
\subsection{$X_1$-intrinsic graphs}
Another relevant class of hypersurfaces is that of intrinsic graphs \cite{MR1871966}. To this aim, set $\mathbb{W}:=\{x_1=0\}\subseteq\hh^n$. A (smooth) intrinsic
graph along $X_1$ is a hypersurface of the form
\[
    S=\{w\cdot\exp(\varphi(w)X_1):w\in U\},
\]
where $U\subseteq\mathbb{W}$ is an open set and
$\varphi\in C^\infty(U)$. We say that $S$ is \emph{entire} if
$U=\mathbb{W}$.
The following results provides a simple criterion to prove that hypersurface is an entire intrinsic graph.
\begin{proposition}\label{propcriteriointrinsic}
Let $G\in C^\infty(\hh^n)$, and set 
$S=\{G=0\}$. If $X_1G\geqslant 1$ on $\hh^n$, then $S$ is an entire, smooth intrinsic graph along $X_1$.
\end{proposition}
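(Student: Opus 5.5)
The plan is to study $G$ along the integral curves of $X_1$, which are the orbits of the flow
\[
\Phi_s(p)=p\cdot\exp(sX_1).
\]
By the group law \eqref{eq:Hproduct}, $\exp(sX_1)=(s e_1,0,0)$. Hence
\[
\Phi_s(x,y,t)=(x+se_1,\,y,\,t+sy_1),
\]
which is a complete flow defined for all $s\in\rr$, and every point $p\in\hh^n$ can be written uniquely as $p=w\cdot\exp(sX_1)$ with $w\in\mathbb W$. Indeed, take $s=x_1$ and $w=p\cdot\exp(-x_1X_1)=(x-x_1e_1,\,y,\,t-x_1y_1)$. The resulting map
\[
\Psi\colon\mathbb W\times\rr\to\hh^n,\qquad \Psi(w,s)=w\cdot\exp(sX_1),
\]
is a smooth diffeomorphism: it is polynomial, with polynomial inverse $p\mapsto\big(p\cdot\exp(-x_1X_1),\,x_1\big)$.

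For fixed $w\in\mathbb W$, set $g_w(s)\coloneqq G(\Psi(w,s))$. Since $\Psi(w,\cdot)$ is an integral curve of $X_1$, we have $g_w'(s)=(X_1G)(\Psi(w,s))\geq 1$. Thus $g_w$ is strictly increasing, with $g_w(s)\geq g_w(0)+s$ for $s\geq0$ and $g_w(s)\leq g_w(0)+s$ for $s\leq 0$. Therefore $g_w(s)\to\pm\infty$ as $s\to\pm\infty$, and by the intermediate value theorem there is a unique $\varphi(w)\in\rr$ with $g_w(\varphi(w))=0$. This shows that
\[
S=\{\Psi(w,\varphi(w))\,:\,w\in\mathbb W\}=\{w\cdot\exp(\varphi(w)X_1)\,:\,w\in\mathbb W\},
\]
so that $S$ is the intrinsic graph of $\varphi$ over all of $\mathbb W$.

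It remains to show that $\varphi\in C^\infty(\mathbb W)$, and this is where the only real care is needed. Consider the smooth function $\tilde G(w,s)\coloneqq G(\Psi(w,s))$ on $\mathbb W\times\rr$. Its derivative in $s$ is $\partial_s\tilde G=(X_1G)\circ\Psi\geq 1\neq 0$. The implicit function theorem then yields, near each $w_0$, a smooth function solving $\tilde G(w,s)=0$, and this local solution must coincide with $\varphi$ by the uniqueness established above. Hence $\varphi$ is smooth. Finally, $S$ is a smooth embedded hypersurface: $DG\neq0$ everywhere because $X_1G\neq0$, and $S=\Psi(\operatorname{graph}\varphi)$ is the image of a graph under a diffeomorphism.
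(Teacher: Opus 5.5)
Your proposal is correct and follows the paper's proof essentially step by step. Both arguments use the diffeomorphism $(w,s)\mapsto w\cdot\exp(sX_1)$ from $\mathbb W\times\mathbb R$ onto $\hh^n$ and the strict monotonicity of $s\mapsto G(w\cdot\exp(sX_1))$ coming from $X_1G\geq 1$; both then get smoothness of $\varphi$ from the implicit function theorem combined with uniqueness of the zero. Your explicit formula $w\cdot\exp(sX_1)=(x+se_1,y,t+sy_1)$ makes concrete what the paper obtains by citation.
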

\begin{proof}
The map $(w,s)\mapsto w\cdot\exp(sX_1)$ is a smooth diffeomorphism
from $\mathbb{W}\times\mathbb{R}$ onto $\hh^n$. By assumption, recalling \cite[Lemma 2.12.1]{MR746308} and for every
$w\in\mathbb{W}$,
\[
    \frac{d}{ds}G\bigl(w\cdot\exp(sX_1)\bigr)
    =X_1G\bigl(w\cdot\exp(sX_1)\bigr)\geqslant 1.
\]
Therefore, $s\mapsto G(w\cdot\exp(sX_1))$ is strictly increasing
and tends to $\pm\infty$ as $s\to\pm\infty$. Hence it vanishes at
a unique $\varphi(w)\in\mathbb{R}$. To prove smoothness, fix $w_0\in\mathbb{W}$. The map
$(w,s)\mapsto G(w\cdot\exp(sX_1))$ is smooth, vanishes at
$(w_0,\varphi(w_0))$, and has nonzero derivative with respect to
$s$ at that point. Therefore, the implicit function theorem
provides an open neighborhood $U_0\subseteq\mathbb{W}$ of $w_0$
and a function $\varphi_0\in C^\infty(U_0)$ such that
$\varphi_0(w_0)=\varphi(w_0)$ and
\[
    G\bigl(w\cdot\exp(\varphi_0(w)X_1)\bigr)=0
    \qquad\text{for every }w\in U_0.
\]
For every $w\in U_0$, both $\varphi_0(w)$ and $\varphi(w)$
are zeros of $s\mapsto G(w\cdot\exp(sX_1))$. By the uniqueness
proved above, $\varphi_0=\varphi$ on $U_0$. Therefore
$\varphi\in C^\infty(\mathbb{W})$, and the thesis follows.
\end{proof}
\section{Proof of the main results}
Throughout this section, we fix $n\geq 2$.
\subsection{The regularized equation}\label{sec_reg_eq}
 For $0\leq\eps\leq 1$, set
 \begin{equation}
 \label{eq:Qeps}
    \Qe u\coloneqq\divv\left(\frac{Du}{\sqrt{|Du|^2+4|x|^2+\eps^2}}\right)= \frac{(|Du|^2+4|x|^2+\eps^2)\Delta u-\left\langle D^2 u\, Du,Du\right\rangle-4\langle Du,x\rangle}{\left(|Du|^2+4|x|^2+\eps^2\right)^\frac{3}{2}},
\end{equation}
\begin{equation}
\label{eq:Neps}
      \Ne u \coloneqq \left(|Du|^2+4|x|^2+\eps^2\right)^\frac{3}{2}\Qe u=(|Du|^2+4|x|^2+\eps^2)\Delta u-\left\langle D^2 u\, Du,Du\right\rangle-4\langle Du,x\rangle.  
\end{equation}

When $\eps\ge 0$, both operators are classically well-defined. Moreover, $\Qe u=0$ if and only if $\Ne u=0$. In the following, we always assume $\eps\in(0,1]$. For $x,p\in\rr^n$, set 
\begin{equation}\label{definizione_matrici_eqregeps}
    \Ae(x,p)=(|p|^2+4|x|^2+\eps^2)I_n-p\otimes p,\qquad b(x,p)=-4\langle p,x\rangle.
\end{equation}
Then 
\begin{equation*}
    \Ne u=\sum_{i,j=1}^n\Ae_{ij}(x,Du)D^2 u_{ij}+b(x,Du).
    \end{equation*}
    Notice that 
    \begin{equation}\label{derivatematriceae384869}
        \partial _{x_i}\Ae_{jk}=8x_i\delta_{jk},\qquad  \partial _{p_i}\Ae_{jk}=2p_i\delta_{jk}-p_j\delta_{ik}-p_k\delta_{ij},\qquad i,j,k=1,\ldots,n.
    \end{equation}
    For future purposes, for $\sigma\in[0,1]$, define also 
  \begin{equation*}
    \Nes u=\sum_{i,j=1}^n\Ae_{ij}(x,Du)D^2 u_{ij}+\sigma b(x,Du),
    \end{equation*}
    and define $\Qes$ accordingly. Clearly $\mathcal Q^{\eps,1}=\Qe$ and $\mathcal N^{\eps,1}=\Ne$.
Following the notation of \cite{MR1814364}, denote by $\lambda^\eps(x,p)$ and $\Lambda^\eps(x,p)$ the lowest and the highest eigenvalues of $\Ae(x,p)$. It is simple to check that 
\begin{equation}\label{eigenvaluesofaeps}
    \lambda^\eps(x,p)=4|x|^2+\eps^2,\qquad \Lambda^\eps(x,p)=|p|^2+4|x|^2+\eps^2,\qquad\frac{\Lambda^\eps(x,p)}{\lambda^\eps(x,p)}=\frac{|p|^2+4|x|^2+\eps^2}{4|x|^2+\eps^2}.
\end{equation}
Therefore, according to the definitions in \cite[Chapter 10]{MR1814364}, $\Qes$ and $\Nes$ are second-order, quasi-linear elliptic operators. Moreover, for any bounded domain $\Om\subseteq\rr^n$ and for any $u\in C^2(\Om)\cap C(\overline\Om)$ such that $Du$ is bounded on $\Om$, then $\Qes$ and $\Nes$ are uniformly elliptic with respect to $u$ over $\Om$. Accordingly, the following comparison principle holds. 
\begin{proposition}\label{prop_comparison_principle}
   Let $\Om\subseteq\rr^n$ be a bounded domain. Let $u,v\in C^2(\Om)\cap C(\overline\Om)$. Assume $\Qes u\geq0$, $\Qes v\leq 0$ on $\Om$. If $u\leq v$ on $\partial\Om$, then $u\leq v$ on $\Om$. In particular, if $\Qes u,\Qes v=0$ on $\Om$ and $u=v$ on $\partial\Om$, then $u= v$ on $\Om$. The same conclusions hold when $\Qes$ is replaced by $\Nes$.
\end{proposition}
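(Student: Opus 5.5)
The plan is to reduce everything to the linear strong maximum principle, using that the coefficients of $\Nes$ do not depend on $u$ itself. First, since $\Qes u=\We(u)^{-3}\,\Nes u$ with $\We(u)=\sqrt{|Du|^2+4|x|^2+\eps^2}>0$ on $\Om$, the sign conditions $\Qes u\geq 0$ and $\Nes u\geq 0$ are equivalent, and likewise for $v$. So it suffices to treat $\Nes$. The ``in particular'' statement follows by applying the main claim twice, with the roles of $u$ and $v$ exchanged.

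Set $w=u-v\in C^2(\Om)\cap C(\overline\Om)$. Since $\Nes u\geq 0\geq\Nes v$, we have
\begin{equation*}
0\leq \Nes u-\Nes v=\sum_{i,j}\Ae_{ij}(x,Du)D^2w_{ij}+\sum_{i,j}\big(\Ae_{ij}(x,Du)-\Ae_{ij}(x,Dv)\big)D^2v_{ij}+\sigma\big(b(x,Du)-b(x,Dv)\big).
\end{equation*}
Both $\Ae_{ij}$ and $b$ are polynomial in $p$. Writing the differences as $\int_0^1\partial_{p_k}(\cdot)(x,Dv+s(Du-Dv))\,ds\;D_kw$ and using \eqref{derivatematriceae384869}, we obtain
\begin{equation*}
Lw\coloneqq\sum_{i,j}a^{ij}(x)D^2w_{ij}+\sum_k b^k(x)D_kw\geq 0\qquad\text{on }\Om,
\end{equation*}
with $a^{ij}(x)=\Ae_{ij}(x,Du(x))$ and with $b^k$ continuous on $\Om$. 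Hence $a^{ij}$ and $b^k$ are locally bounded in $\Om$; they need not be bounded up to $\partial\Om$. There is no zero-order term, because the operator has no $z$-dependence. By \eqref{eigenvaluesofaeps}, the smallest eigenvalue of $(a^{ij})$ is $4|x|^2+\eps^2\geq\eps^2>0$. Therefore $L$ is uniformly elliptic on every compact subset of $\Om$, with locally bounded coefficients.

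Now argue by contradiction. Suppose $M\coloneqq\max_{\overline\Om}w>0$; the maximum exists because $\overline\Om$ is compact and $w\in C(\overline\Om)$. Since $w\leq 0$ on $\partial\Om$, the maximum is attained at an interior point. The strong maximum principle \cite[Theorem 3.5]{MR1814364} is a purely local statement: it requires only local uniform ellipticity and local boundedness of the coefficients, with $c=0$. Applied on balls compactly contained in $\Om$, it shows that the set $\{w=M\}$ is open in $\Om$; it is also closed in $\Om$ by continuity. Since $\Om$ is connected, $w\equiv M$ on $\Om$, and by continuity $w=M>0$ on $\partial\Om$, which is a contradiction. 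Thus $u\leq v$.

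The only delicate point, and the reason I avoid quoting the weak maximum principle or \cite[Theorem 10.1]{MR1814364} verbatim, is that $Du$ and $D^2v$ are not assumed bounded up to $\partial\Om$. Consequently the coefficients of $L$ are only locally bounded. The strong maximum principle, combined with the connectedness of $\Om$ and continuity up to the boundary, sidesteps this issue. Note also that positivity of $\eps$ is what gives a uniform ellipticity floor near $x=0$.
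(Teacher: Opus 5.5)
Your proof is correct and is essentially the paper's argument. The paper simply cites \cite[Theorem 10.1]{MR1814364}, whose proof is precisely your linearization to $Lw\geq 0$, with $L$ locally uniformly elliptic, locally bounded coefficients and no zero-order term, followed by a maximum principle. Your local strong maximum principle plus connectedness is a clean way to do that last step. Your caution about quoting Theorem 10.1 is unnecessary, though: its hypothesis is only \emph{local} uniform ellipticity with respect to $u$ (which needs $\eps>0$, as you note), so it already allows $Du$ and $D^2v$ to be unbounded near $\partial\Om$.
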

\begin{proof}
    Since $u\in C^1(\Om)$, $\Qes$ and $\Nes$ are locally uniformly elliptic with respect to $u$, in the sense of \cite[Chapter 10]{MR1814364}. The thesis then follows by \cite[Theorem 10.1]{MR1814364}.
\end{proof}
\subsection{The regularized Dirichlet problems on balls}
In this section, we solve the Dirichlet problem associated with $\Qe$, when the domain is a ball and the boundary datum has a very specific shape. Throughout the paper, we denote by $B_R=B_R(0)$ the Euclidean ball centered at $0$.
\begin{proposition}\label{propositiondiriboundcondex}
Let $R,K>0$ and $\eps\in(0,1]$. The problem
\begin{equation}\label{eq_diri_ball_eps}
 \Qe u=0\quad\text{on }B_R,
 \qquad u=Kx_1\quad\text{on }\partial B_R
\end{equation}
has a unique solution $u\in C^\infty(\overline B_R)$. Moreover, $u$ is odd in $x_1$, and invariant under orthogonal transformations of $x_2,\ldots,x_n$. 
\end{proposition}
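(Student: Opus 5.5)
The plan is to obtain existence from the Leray–Schauder (continuity) method of \cite[Theorem 11.4/11.8]{MR1814364}, applied to the family $\Nes$, $\sigma\in[0,1]$. Uniqueness is immediate from \Cref{prop_comparison_principle}, and the symmetries then follow from uniqueness. For $\eps>0$ the operator $\Ne$ has smooth coefficients. Its smallest eigenvalue is $\lambda^\eps\geq\eps^2>0$ by \eqref{eigenvaluesofaeps}, so it is elliptic on $\overline B_R$ for every bounded gradient. The boundary datum $\varphi=Kx_1$ is smooth and $\partial B_R$ is smooth. Consequently it suffices to prove an a priori $C^1(\overline B_R)$ bound for solutions $u\in C^2(\overline B_R)$ of $\Nes u=0$ in $B_R$, $u=\sigma Kx_1$ on $\partial B_R$, uniform in $\sigma$. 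I will use the version where the boundary datum is scaled by $\sigma$; the family $\Nes$ with datum $Kx_1$ would also do. Once this bound is in hand, Hölder estimates for the gradient (Ladyzhenskaya–Ural'tseva, \cite[Theorem 13.7]{MR1814364}) and Schauder theory give a $C^{2,\beta}(\overline B_R)$ solution, and bootstrapping yields $u\in C^\infty(\overline B_R)$.

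\textbf{Height bound.} Constants solve $\Nes c=0$. By \Cref{prop_comparison_principle} we get $|u|\leq KR$.

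\textbf{Boundary gradient bound.} I will build barriers at each $x_0\in\partial B_R$, using the fact that $\Nes$ is uniformly elliptic in the strong sense that $\lambda^\eps\geq\eps^2$. Since $\Lambda^\eps/\lambda^\eps$ grows like $|p|^2$, this is the mean curvature type structure. A ball is uniformly convex, and the principal part here is the minimal surface operator plus a bounded perturbation $(4|x|^2+\eps^2)\Delta u$. So I expect the standard construction of \cite[Section 14.3, Theorem 14.6 / Corollary 14.5]{MR1814364} to apply. The explicit barrier would be $w^\pm=\sigma Kx_1\pm \psi(d)$ with $\psi(d)=k\log(1+\mu d)$, where $d$ is the distance to $\partial B_R$. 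Equivalently, one checks the structural conditions (14.34) there: $|p|^2\Lambda$-type growth against $\mathcal E=\langle \Ae p,p\rangle=(4|x|^2+\eps^2)|p|^2$ and $|b|\leq 8R|p|=o(\mathcal E)\,|p|^{-1}\cdot|p|^2/\dots$. The drift $b=-4\langle p,x\rangle$ is linear in $p$, so it is dominated by $\mathcal E\,|p|$ at large $|p|$. This is the step I expect to require the most care, namely verifying the structure conditions with the degenerate-in-$\eps$ constant, but $\eps$ is fixed here, so any $\eps$-dependence is harmless.

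\textbf{Global gradient bound.} The coefficients depend on $x$, so the classical gradient maximum principle for the minimal surface equation does not apply verbatim. I will instead use \cite[Theorem 15.2]{MR1814364}, checking the structure conditions (15.22) for $\Ae,b$ via \eqref{derivatematriceae384869}. The relevant facts are $|\partial_x\Ae|\leq 8R$ and $|\partial_p\Ae|\lesssim|p|$, against $\lambda^\eps\geq\eps^2$ and $\Lambda^\eps\sim|p|^2$. If that is cumbersome, I will use the alternative of differentiating the equation and applying the maximum principle to $|Du|^2$ plus a small multiple of $|x|^2$. Together with the boundary bound, this gives $\sup|Du|\leq C(\eps,R,K)$ independent of $\sigma$.

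\textbf{Symmetry.} Let $P(x_1,x')=(-x_1,x')$ and $O$ be orthogonal in $x'$. Since $|x|$ is invariant, $\Qe$ commutes with isometries fixing $0$. Hence $-u\circ P$ and $u\circ O$ solve \eqref{eq_diri_ball_eps} with the same boundary values $Kx_1$. Uniqueness then gives $u=-u\circ P$ and $u=u\circ O$.
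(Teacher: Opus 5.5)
Your outline is correct and has the same skeleton as the paper's proof: Leray--Schauder for $\Nes$ with datum $\sigma Kx_1$, constants as height barriers, a boundary and a global gradient bound, a Schauder bootstrap, and uniqueness and symmetries via \Cref{prop_comparison_principle}. The difference is that you delegate the two gradient steps to the general theory of Gilbarg--Trudinger, Chapters 14--15, whereas the paper does both by hand.

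\textbf{Boundary gradient.} The paper uses the global barriers $\sigma Kx_1\pm(R^2-|x|^2)$. Completing a square shows directly that they are a super- and a subsolution on all of $B_R$, which gives $|Du|\leq K+2R$ on $\partial B_R$. No collar around $\partial B_R$ and no structure conditions are needed. Your barrier $\sigma Kx_1\pm\psi(d)$ also works. Set $q=\langle Dw,x\rangle/|x|$. The drift contributes at most $4|x||q|$, and this is dominated by the curvature term $(n-1)\psi'|x|^{-1}(q^2+4|x|^2)$ once $\psi'\geq R$. The paper's barrier is the case $\psi(d)=d(2R-d)$, where $\psi'=2|x|$.

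\textbf{Global gradient.} The paper proves $\max_{\overline B_R}|Du|^2\leq\max\{\max_{\partial B_R}|Du|^2,4R^2\}$ directly. At an interior maximum of $w=|Du|^2$ one has $D^2u\,Du=0$, so the equation reads $\We^2\Delta u=4\sigma\langle Du,x\rangle$. Differentiating the equation along $Du$ then gives $0\geq 4\sigma w-8\langle Du,x\rangle\Delta u\geq 4\sigma w\,(w-4|x|^2+\eps^2)\We^{-2}$, which is impossible if $w\geq 4R^2$.

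Your sketch leaves uniformity in $\sigma$ implicit, and that is where the step could fail. Suppose you absorb the term $8\langle Du,x\rangle\Delta u$, coming from $\partial_x\Ae$, into $\We^2|D^2u|^2$ by Cauchy--Schwarz. The remainder is then controlled only by the drift term $4\sigma|Du|^2$, and you get $|Du|^2\lesssim nR^2/\sigma$. That bound is useless for Leray--Schauder. The paper substitutes the equation, which makes the bad term itself $O(\sigma)$. Your variant $|Du|^2+\mu|x|^2$ also closes, because it produces the $\sigma$-independent good term $\mu\trace\Ae\geq\mu(n-1)|Du|^2$ (this uses $n\geq 2$). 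After that, crude absorption bounds $|Du|$ at an interior maximum. You should write this computation out, since it carries the content of the proposition.

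I would not rely on the Chapter 15 theorem without checking its hypotheses. They are not obviously met: $|p|\,|\partial_p\Ae|$ grows like $|p|^2$ while $\lambda^\eps$ stays bounded.

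The paper's explicit estimate has a further advantage. It is independent of $\eps$ and of the boundary datum, and it holds on every $B_\varrho$. It is reused as \Cref{quasimaximumprincipleproposition} in the compactness argument as $\eps\to0$. Constants from the general theory would depend on $\lambda^\eps\geq\eps^2$ and would be of no use there.
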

\begin{proof}
    First, we show existence. We rely on the well-known Leray-Schauder fixed point approach. To this aim, 
    Fix any $\alpha\in(0,1)$. By \cite[Theorem 13.8]{MR1814364}, \eqref{eq_diri_ball_eps} is solvable in $C^{2,\alpha}(\overline B_R)$ provided that solutions to 
    \begin{equation}\label{eq_diri_ball_eps_sigma}
 \Nes u=0\quad\text{on }B_R,
 \qquad u=\sigma Kx_1\quad\text{on }\partial B_R
\end{equation}
are bounded in $C^1(\overline B_R)$ uniformly in $u$ and $\sigma\in[0,1]$. Fix any such solution. When $\sigma=0$, then $u\equiv 0$ solves \eqref{eq_diri_ball_eps_sigma}, and it is the unique solution by \Cref{prop_comparison_principle}. Therefore, we assume that $\sigma\ge 0$. 

\smallskip
\noindent \textbf{Step 1.} We first show that $u\in C^\infty(\overline B_R)$. Indeed, setting $\mathcal A^\eps(x)=\Ae(x,Du(x))$, then $u$ satisfies 
\begin{equation}\label{linear_elliptic_forreg}
\sum_{i,j=1}^n\mathcal A^\eps(x)_{ij}D^2 u_{ij}-4\sigma \langle Du,x\rangle=0\qquad \text{on $B_R$.}
\end{equation}
The latter, is a second-order \emph{linear} equation without zero-order terms. Since $u\in C^{2,\alpha}(\overline B_R)$, then \eqref{eigenvaluesofaeps} implies that \eqref{linear_elliptic_forreg} is uniformly elliptic, and that $\mathcal A^\eps_{ij}\in C^{1,\alpha}(\overline B_R)$. Therefore, \cite[Theorem 6.19]{MR1814364} grants $u\in C^{3,\alpha}(\overline B_R)$. Bootstrapping the above argument, $u\in C^\infty(\overline B_R).$

\smallskip
\noindent \textbf{Step 2.} Next, we derive uniform estimates in $C^1(\overline B_R)$. First, notice that the constant functions $-KR$ and $KR$ are solutions to the equation. Moreover, $-KR\leq u\leq KR$ on $\partial B_R.$ Then, \Cref{prop_comparison_principle} yields $-KR\leq u\leq KR$ on $\overline B_R$, i.e. $|u|\leq KR$ on $\overline B_R$. 

\smallskip
\noindent \textbf{Step 3.} In order to achieve boundary gradient estimates, we exhibit an upper and a lower barrier realizing the boundary datum. Accordingly, set $b_\pm\coloneqq \sigma Kx_1\pm(R^2-|x|^2)$. Clearly, $b_+=b_-=u$ on $\partial B_R$. We show that $\Nes b_+\leq 0$. Indeed, since
\begin{equation*}
    Db_+=(\sigma K-2x_1,-2x_2,\ldots,-2x_n),\quad Db_-=(\sigma K+2x_1,2x_2,\ldots,2x_n),\quad D^2 b_+=-2I_n,\quad D^2 b_-= 2I_n,
\end{equation*}
then 
\begin{equation*}
    \begin{split}
        \Nes b_+\overset{\eqref{definizione_matrici_eqregeps}}&{=}-2n\big(|Db_+|^2+4|x|^2+\eps^2\big)+2|Db_+|^2-4\sigma\langle D b_+,x\rangle \\
        &=-2(n-1)|Db_+|^2-4\sigma\langle D b_+,x\rangle-8n|x|^2-2n\eps^2\\
        &=-2(n-1)\left|Db_++\frac{\sigma x}{n-1}\right|^2+\left(\frac{2\sigma^2}{n-1}-8n\right)|x|^2-2n\eps^2\\
        &\leq 0.
    \end{split}
\end{equation*}
Similarly,
\begin{equation*}
    \begin{split}
         \Nes b_-\overset{\eqref{definizione_matrici_eqregeps}}&{=}2n\big(|Db_-|^2+4|x|^2+\eps^2\big)-2|Db_-|^2-4\sigma\langle D b_-,x\rangle \\
        &=2(n-1)|Db_-|^2-4\sigma\langle D b_-,x\rangle+8n|x|^2+2n\eps^2\\
        &=2(n-1)\left|Db_--\frac{\sigma x}{n-1}\right|^2+\left(8n-\frac{2\sigma^2}{n-1}\right)|x|^2+2n\eps^2\\
        &\geq 0.
    \end{split}
\end{equation*}
Therefore, \Cref{prop_comparison_principle} yields $b_- \leq u\leq b_+$ on $\overline B_R$. Since both barriers coincide with $u$ on $\partial B_R$ and $|D b_\pm|\leq K+2R$ on $\overline B_R$, we conclude that $|D u|\leq K+2R$ on $\partial B_R$. 

\smallskip
\noindent \textbf{Step 4.} 
To conclude, we show that 
\begin{equation}\label{quasi_max_principle}
    \max_{\overline B_R} |Du|^2\leq \max\left\{ \max_{\partial B_R} |Du|^2,4R^2\right\}.
\end{equation}
By contradiction, assume \eqref{quasi_max_principle} does not hold. Let $\tilde x\in B_R$ be a point where \eqref{quasi_max_principle} fails. Then, we may assume that $\tilde x$ is an interior maximum point of $w$. For notational simplicity, set $w=|Du|^2$. Therefore
\begin{equation}\label{maximum_point_of_w_in_max_principle}
    Dw=2 D^2 u\,Du=2\left(\sum_{i=1}^n u_{1i} u_i,\ldots,\sum_{i=1}^n u_{ni} u_i\right)=0,\qquad D^2 w\leq 0\qquad\text{at $\tilde x$.}
\end{equation}
In particular, 
\begin{equation}\label{equazione_in_pt_massimo_w_max_principle_34578}
(w+4|x|^2+\eps^2)\Delta u\overset{\eqref{eq_diri_ball_eps_sigma}}{=}4\sigma \langle Du,x\rangle\qquad\text{at $\tilde x$.}
\end{equation}
By direct computation, at $\tilde x$,
\begin{equation}\label{eq_ausiliarie_28486948375}
    \partial _i\langle Du,x\rangle =u_i+\sum_{j=1}^nu_{ij} x_j,\qquad \partial _i \Ae_{jk}\coloneqq \partial_i\left(\Ae_{jk}(x,Du)\right)\overset{\eqref{maximum_point_of_w_in_max_principle}}{=}8x_i\delta_{jk}-u_{ij}u_k-u_{ik}u_j\qquad i=1,\ldots, n.
\end{equation}

Therefore, at $\tilde x$,
\begin{equation*}
    \begin{split}
        0\overset{\eqref{eq_diri_ball_eps_sigma}}&{=}\sum_{i=1}^n\partial_i\left(\Nes u\right)\,u_i\\
        \overset{\eqref{maximum_point_of_w_in_max_principle},\eqref{eq_ausiliarie_28486948375}}&{=}\sum_{i,j,k=1}^n\partial_i\Ae_{jk}\,u_{jk}\,u_i+\sum_{i,j,k=1}^n\Ae_{jk}\,u_{ijk}\,u_i-4\sigma w\\
        \overset{\eqref{maximum_point_of_w_in_max_principle},\eqref{eq_ausiliarie_28486948375}}&{=}8\Delta u\langle Du,x\rangle+\sum_{i,j,k=1}^n\Ae_{jk}\,u_{ijk}\,u_i-4\sigma w,
    \end{split}
\end{equation*}
whence
\begin{equation}\label{eq_ausiliariaperderivataaaaaa384954}
    \sum_{i,j,k=1}^n\Ae_{jk}\,u_{ijk}\,u_i=4\sigma w-8\Delta u\langle Du,x\rangle\qquad\text{at $\tilde x$.}
\end{equation}
Noticing that $\trace\left(\Ae\cdot D^2 w\right)\leq 0$ at $\tilde x$ by \eqref{maximum_point_of_w_in_max_principle}, then, at $\tilde x$,  
\begin{equation*}
\begin{split}
      0&\geq \frac{1}{2}\sum_{i,j=1}^n\Ae_{ij}w_{ij}\\
      &=\sum_{i,j,k=1}^n\Ae_{ij}\partial_i\big(u_{jk} u_k\big)\\
      &=\sum_{i,j,k=1}^n\Ae_{ij}u_{ijk} u_k+\sum_{i,j,k=1}^n\Ae_{ij}u_{jk} u_{ik}\\
      \overset{\eqref{eq_ausiliariaperderivataaaaaa384954}}&{=}4\sigma w-8\Delta u\langle Du,x\rangle+\sum_{i,j,k=1}^n\Ae_{ij}u_{jk} u_{ik}\\
      \overset{\eqref{maximum_point_of_w_in_max_principle}}&{=}4\sigma w-8\Delta u\langle Du,x\rangle+(w+4|x|^2+\eps)^2|D^2 u|^2.
\end{split}   
\end{equation*}
In particular, $4\sigma w-8\Delta u\langle Du,x\rangle\leq 0$ at $\tilde x$. On the other hand, at $\tilde x$,
\begin{equation*}
    \begin{split}
        4\sigma w-8\Delta u\langle Du,x\rangle\overset{\eqref{equazione_in_pt_massimo_w_max_principle_34578}}&{=}4\sigma \left(w-\frac{8\langle Du,x\rangle^2}{w+4|x|^2+\eps^2}\right)\\
        &\geq 4\sigma w\left(1-\frac{8|x|^2}{w+4|x|^2+\eps^2}\right)\\
        &=4\sigma w\left(\frac{w-4|x|^2+\eps^2}{w+4|x|^2+\eps^2}\right),
    \end{split}
\end{equation*}
Since $\sigma\ge 0$ and $w(\tilde x)\ge 4R^2$ by the choice of $\tilde x$, then   $4\sigma w-8\Delta u\langle Du,x\rangle\ge 0$ at $\tilde x$, a contradiction. Therefore \eqref{quasi_max_principle} holds. Combining the previous step, we conclude that \eqref{eq_diri_ball_eps} is solvable in $C^{2,\alpha}(\overline B_R)$.

\smallskip
\noindent \textbf{Step 5.} Let $u\in C^{2,\alpha}(\overline B_R)$ solve \eqref{eq_diri_ball_eps}. By Step 1, $u\in C^{\infty}(\overline B_R)$. Moreover, $u$ is unique by \Cref{prop_comparison_principle}. Since $x\mapsto -u(-x_1,x_2,\ldots,x_n)$ solves  \eqref{eq_diri_ball_eps} as well, then $u$ is odd in $x_1$ by uniqueness. Similarly, it is simple to check that $x\mapsto u(x_1, R(x_2,\ldots,x_n))$ solves \eqref{eq_diri_ball_eps} for any orthogonal transformation $R$ of $x_2,\ldots,x_n$. Again, the thesis follows by uniqueness. 
\end{proof}
We point out that Step 4 in the previous proof holds regardless of the given boundary condition in \eqref{eq_diri_ball_eps}. Accordingly, its very same proof provides the following estimate, which will be used in the next sections. 
\begin{proposition}\label{quasimaximumprincipleproposition}
    Let $0\le \varrho\leq R$, $\eps\in (0,1]$. Let $u\in C^\infty(\overline B_R)$ satisfy $\Qe u=0$ on $B_R$. Then  
\begin{equation}\label{quasi_max_principle_in_proposition}
    \max_{\overline B_\varrho} |Du|^2\leq \max\left\{ \max_{\partial B_\varrho} |Du|^2,4\varrho^2\right\}.
\end{equation}
\end{proposition}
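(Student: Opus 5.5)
The plan is to rerun Step 4 of the proof of \Cref{propositiondiriboundcondex} on $\overline B_\varrho$ instead of $\overline B_R$, with $\sigma=1$; that argument never used the boundary datum. Set $w=|Du|^2$, which is smooth on $\overline B_\varrho$ because $u\in C^\infty(\overline B_R)$. Arguing by contradiction, suppose \eqref{quasi_max_principle_in_proposition} fails. Then the maximum of $w$ over the compact set $\overline B_\varrho$ exceeds $\max_{\partial B_\varrho} w$, so it is attained at an interior point $\tilde x\in B_\varrho$. It also exceeds $4\varrho^2$, so $w(\tilde x)>4\varrho^2\geq 4|\tilde x|^2$. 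At $\tilde x$ we have $Dw=2D^2u\,Du=0$ and $D^2w\leq 0$. The degenerate case $\varrho=0$ is trivial, since then the left-hand side equals $w(0)=\max_{\partial B_0}w$.

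Next we differentiate the equation $\Ne u=0$ (valid on $B_R$, hence near $\tilde x$) in the direction $Du$. Using \eqref{derivatematriceae384869} together with $D^2u\,Du=0$ at $\tilde x$, the terms $-u_{ij}u_k-u_{ik}u_j$ in $\partial_i\Ae_{jk}$ drop out after contraction. This yields
\[
\sum_{i,j,k}\Ae_{jk}u_{ijk}u_i=4w-8\Delta u\langle Du,x\rangle \quad\text{at } \tilde x .
\]
Ellipticity of $\Ae$ and $D^2w\leq0$ give $\trace(\Ae D^2w)\leq 0$. Expanding $\tfrac12\trace(\Ae D^2 w)$ gives the third-order term above plus $\sum_{i,j,k}\Ae_{ij}u_{jk}u_{ik}$. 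The latter is nonnegative, since $\Ae$ is positive definite and the expression equals $\trace(\Ae (D^2u)^2)$. Hence
\[
4w-8\Delta u\langle Du,x\rangle\leq 0 \quad\text{at } \tilde x .
\]

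Finally, at $\tilde x$ the equation reduces to $(w+4|x|^2+\eps^2)\Delta u=4\langle Du,x\rangle$, because the term $\langle D^2u\,Du,Du\rangle$ vanishes. Substituting and applying Cauchy--Schwarz, $\langle Du,x\rangle^2\leq w|x|^2$, gives
\[
4w-8\Delta u\langle Du,x\rangle\geq 4w\,\frac{w-4|x|^2+\eps^2}{w+4|x|^2+\eps^2}.
\]
This is strictly positive, since $w(\tilde x)>4|\tilde x|^2$ and $w>0$. That contradicts the previous inequality.

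The only step needing care is the bookkeeping in the differentiated equation: one must check that every term involving $D^2u\,Du$ cancels at the critical point. This is identical to Step 4, so no new obstacle arises; the sole change is replacing $R$ by $\varrho$.
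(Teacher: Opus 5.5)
Your proposal is correct and is essentially the paper's own argument: the paper obtains this proposition by noting that Step 4 of the proof of \Cref{propositiondiriboundcondex} never uses the boundary datum. You rerun that step with $\sigma=1$ on $B_\varrho$, with the same cancellation of the $D^2u\,Du$ terms at the interior maximum and the same Cauchy--Schwarz contradiction. Your justification that $\sum_{i,j,k}\Ae_{ij}u_{jk}u_{ik}=\trace(\Ae(D^2u)^2)\geq 0$ is, if anything, cleaner than the paper's displayed expression for that term, which should read $(w+4|x|^2+\eps^2)|D^2u|^2$.
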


\subsection{Uniform height estimates}\label{erfnjtrvijng}
We provide height estimates for the solutions to \eqref{eq_diri_ball_eps},
uniformly with respect to $R>0$ and $\eps\in(0,1]$, provided that a precise boundary condition is prescribed. Accordingly, set
\begin{equation}\label{defigammaecn}
     \gamma\coloneqq\frac{\sqrt{(n-1)^2+4}-(n-1)}4,
 \qquad
 C_n\coloneqq\begin{cases}3,&n=2,\\ \frac{2^\gamma}{2\gamma},&n\geq3,\end{cases}
\end{equation}
and choose the boundary slope
\begin{equation}\label{definizionekappaperheightestimates3948568}
 K(R)\coloneqq C_n(1+R^2)^\gamma.
\end{equation}
\begin{proposition}\label{propunistimamain3948573883derivata}
For every $R>0$ and $\eps\in(0,1]$,
the solution $u\in C^\infty(\overline B_R)$ of \eqref{eq_diri_ball_eps}
with $K=K(R)$ as in \eqref{definizionekappaperheightestimates3948568}
satisfies
\begin{equation}\label{uniformestimatesproprincheigh}
 x_1\leq u(x)\leq 2(4^\gamma C_n)^{\frac{1}{1-2\gamma}} x_1(1+|x|^2)^\gamma
 \qquad\text{on }\overline B_R\cap\{x_1\geq0\}.
\end{equation}
\end{proposition}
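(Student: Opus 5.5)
The plan is to prove \eqref{uniformestimatesproprincheigh} by comparison on the half-ball $\Om_R^+\coloneqq B_R\cap\{x_1>0\}$, using \Cref{prop_comparison_principle} with $\sigma=1$. By \Cref{propositiondiriboundcondex}, $u$ is odd in $x_1$, so $u=0$ on $\{x_1=0\}\cap\overline B_R$. On the spherical part of $\partial\Om_R^+$ we have $u=K(R)x_1$. It therefore suffices to find a subsolution $\psi_-$ and a supersolution $\psi_+$ of $\Ne$ on $\Om_R^+$ with the following properties: both vanish on $\{x_1=0\}$; $\psi_-\le K(R)x_1\le\psi_+$ on $\partial B_R\cap\{x_1\ge0\}$; $\psi_-\ge x_1$; and $\psi_+$ is bounded by the right-hand side of \eqref{uniformestimatesproprincheigh}. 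We cannot simply take $\psi_-=x_1$: a direct computation gives $\Ne(x_1)=-4x_1\le 0$ on $\{x_1\ge0\}$, so $x_1$ is a supersolution, not a subsolution.

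The shape of the barriers comes from the choice of $\gamma$ in \eqref{defigammaecn}. Since $4\gamma+(n-1)=\sqrt{(n-1)^2+4}$, we have $4\gamma^2+2(n-1)\gamma=1$. Hence $\beta=2\gamma$ solves $\beta^2+(n-1)\beta-1=0$. This is exactly the condition for $x_1|x|^{\beta}$ to solve the linearization $\Delta v-\langle Dv,x\rangle/|x|^2=0$, which is what $\Ne$ reduces to when $|Du|\ll|x|$ and $\eps\to0$. We have $\Delta(x_1r^\beta)=\beta(\beta+n)x_1r^{\beta-2}$ and $\langle D(x_1r^\beta),x\rangle=(1+\beta)x_1r^\beta$. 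I will therefore use the family $\psi_A(x)=A\,x_1(1+|x|^2)^\gamma$, where the smoothing $1+|x|^2$ handles the origin.

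\textbf{Step 1.} Compute $\Ne\psi_A$ in terms of $s=1+|x|^2$. Collect the terms linear in $A$ (coefficient $(4|x|^2+\eps^2)$ times the linear operator) and the terms cubic in $A$ (coming from $|D\psi_A|^2\Delta\psi_A-\langle D^2\psi_A D\psi_A,D\psi_A\rangle$). Using $4\gamma^2+2(n-1)\gamma=1$, the linear part should reduce to an explicit multiple of $x_1 s^{\gamma-2}$ with a definite sign. For the lower barrier I take $A=1$, so that $\psi_-\ge x_1$ and $\psi_-\le C_n(1+R^2)^\gamma x_1$ on $\partial B_R$ because $C_n\ge1$. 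For the upper barrier I take $A$ large, at least $C_n$, so that $\psi_+\ge K(R)x_1$ on $\partial B_R$.

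\textbf{Step 2.} This is the main obstacle: establishing the signs $\Ne\psi_-\ge0$ and $\Ne\psi_+\le0$ on the whole of $\Om_R^+$, uniformly in $\eps\in(0,1]$. The $\eps^2$ and constant terms spoil exact homogeneity near the origin. The cubic gradient terms behave like the minimal surface operator in $n-1$ directions, and their sign depends on whether $A$ is small or large. I would first try splitting $\Om_R^+$ into the region $|x|\le1$, where everything is explicit and the case distinction $n=2$ versus $n\ge3$ (reflected in $C_n$) enters, and the region $|x|\ge1$, where $s\sim|x|^2$ and the homogeneous computation dominates. If $\psi_+$ is not globally a supersolution, I would pass to a modified barrier, namely a maximum or rescaling of $\psi_A$ with $A=(4^\gamma C_n)^{1/(1-2\gamma)}$. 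This value is what the constant in \eqref{uniformestimatesproprincheigh} suggests: it arises from balancing $A^3|x|^{6\gamma}$ against $A|x|^{2}$, and the extra factor $2$ absorbs the case of small $|x|$.

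\textbf{Step 3.} Conclude by comparison. By \Cref{prop_comparison_principle} on $\Om_R^+$, we get $x_1\le\psi_-\le u\le\psi_+$, and $\psi_+$ is at most $2(4^\gamma C_n)^{1/(1-2\gamma)}x_1(1+|x|^2)^\gamma$. Since none of the constants depend on $R$ or $\eps$, this gives \eqref{uniformestimatesproprincheigh}.
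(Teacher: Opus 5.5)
There is a genuine gap, and it sits in Step 2, which carries the whole content of the proposition: the functions you propose are not barriers. Write $v=x_1h(|x|^2)$, $s=|x|^2$, $t=1+s$. By \Cref{prop:barrier-profile}, $\Ne v/x_1=C^\eps(s)+x_1^2D(s)$, which tends to $C^\eps(0)=\bigl(2(n-1)h(0)^2+2(n+2)\eps^2\bigr)h'(0)-4h(0)$ as $x\to0$.

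For $\psi_1$, i.e.\ $h=t^\gamma$, this limit is $\bigl(2(n-1)+2(n+2)\eps^2\bigr)\gamma-4<0$ (note $2(n-1)\gamma=1-4\gamma^2<1$). So near the origin $\psi_1$ behaves like $x_1$ and is a strict \emph{super}solution. For $\psi_A$ with $A\geq C_n$ the limit is $A\bigl((2(n-1)A^2+2(n+2)\eps^2)\gamma-4\bigr)>0$, so your upper barrier is a strict subsolution near $0$. Large $A$ pushes toward subsolutions, not supersolutions.

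Moreover, no multiple of $x_1(1+|x|^2)^\gamma$ is a supersolution on large balls. Since $x_1|x|^{2\gamma}$ solves $|x|^2\Delta v=\langle Dv,x\rangle$ exactly (your own heuristic), the sign at infinity is decided by lower-order terms. On the $x_1$-axis, where $x_1^2=s$:
\begin{itemize}
\item the $\eps$-free linear part is $-4At^{\gamma-2}\bigl[(1-2\gamma)^2t+4\gamma(1-\gamma)\bigr]$;
\item the $\eps^2$-terms are positive;
\item the cubic part $2(n-1)h'(h+2sh')^2\geq 2(n-1)\gamma A^3t^{3\gamma-1}$ decays more slowly than the linear part.
\end{itemize}
Hence $\Ne\psi_A>0$ for $|x|$ large, for every $A>0$ and every $\eps\geq0$. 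Rescaling does not cure this, because $\Ne[\lambda^2\psi(\cdot/\lambda)](x)=\lambda^2\mathcal N^{\eps/\lambda}\psi(x/\lambda)$ and the failure occurs for every value of the regularization parameter. A maximum of supersolutions is not a supersolution either, and it is not $C^2$ as \Cref{prop_comparison_principle} requires.

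What is missing are the paper's two devices for the upper barrier, and a different profile for the lower one.

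\textbf{Upper barrier.} One adds a pure linear term, $U=x_1\bigl[1+(1+|x|^2/4)^\gamma\bigr]$ (\Cref{prop:barrier-upper}). The summand $x_1$ contributes $-4x_1$ through the drift $-4\langle Du,x\rangle$. This dominates the cubic terms, of order $x_1|x|^{6\gamma-2}$, at infinity because $\gamma<1/3$. Near $0$ the slope $h'$ is small, so the cubic terms are harmless there. The large coefficient at infinity then comes from the intrinsic dilation $u_+(x)=\lambda^2U(x/\lambda)$. This is still a supersolution since $\eps/\lambda\leq1$, keeps $h'(0)=\gamma/(4\lambda)$ small, and has asymptotic coefficient $\lambda^{1-2\gamma}4^{-\gamma}$. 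The value $\lambda=(4^\gamma C_n)^{1/(1-2\gamma)}$ is chosen precisely so that $u_+\geq C_nx_1(1+|x|^2)^\gamma$, which equals $K(R)x_1$ on $\partial B_R$. The factor $2$ comes from $U\leq 2x_1(1+|x|^2)^\gamma$. So the constant is not a balance of $A^3|x|^{6\gamma}$ against $A|x|^2$.

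\textbf{Lower barrier.} Here one needs the opposite mechanism: $h(0)=1$ but $2(n-1)h'(0)\geq 4$, so that the cubic terms win at the origin. The paper's $h(s)=1+\frac{(1+2s)^\gamma-1}{2\gamma}$ has $h'(0)=1$, hence $C^0(0)=2n-6$, which is why it works only for $n\geq3$. Its negative linear component $-\frac{1-2\gamma}{2\gamma}x_1$ supplies the positive constant $4(1-2\gamma)$ in $2\gamma C^0$ that controls large $|x|$. For $n=2$ a different ansatz, $x_1(1+8|x|^2)^\gamma+3x_1^3(1+8|x|^2)^{3\gamma-2}$, with a long polynomial verification, is needed (\Cref{prop:barrier-lower-planar}).

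Your reduction to comparison on the half-ball via oddness is the same as in the paper, but it is the easy part.
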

The proof of \Cref{propunistimamain3948573883derivata} will follow by exhibiting an upper and a lower barrier for $\Ne$ which are uniform with respect to $\eps\in(0,1]$ and $R\ge0$, and which are compatible with the boundary condition dictated by \eqref{definizionekappaperheightestimates3948568}. In the following computations, we use
\begin{equation}\label{eq:barrier-gamma}
 4\gamma^2+2(n-1)\gamma=1,\qquad
 0<\gamma<\tfrac13,\qquad 0<\gamma<\tfrac14\text{ if }n\geq 3,\qquad
 2\big(1-(n+1)\gamma\big)=(1-2\gamma)^2.
\end{equation}
\begin{proposition}\label{prop:barrier-profile}
Let $h:\rr_+\to\rr$ be a smooth function. Set $v(x)=x_1h(|x|^2)$. Then 
\begin{equation}\label{eq:barrier-profile}
 \frac{\Ne v}{x_1}=C^\eps(|x|^2)+x_1^2D(|x|^2),\qquad
 \frac{\Delta v}{x_1}=2(n+2)h'+4|x|^2 h''\qquad\text{on } \{x_1\ge 0\},
\end{equation}
where
\begin{align*}
 C^\eps(s)&\coloneqq 4s(h^2+4s+\eps^2)h''
       +[2(n-1)h^2+8(n+1)s+2(n+2)\eps^2]h'
       -8s h(h')^2-4h,\\
 D(s)&\coloneqq 8nh(h')^2+8(n-1)s(h')^3-4h^2h''.
\end{align*}
In particular, if $h>0$, $h'>0$ and $h''<0$, then $D>0$ and 
\begin{equation}
\label{eq:cepsin}
C^\eps<\frac{\Ne v}{x_1}  \leq C^\eps+sD
\end{equation}
where
\begin{equation}\label{eq:barrier-endpoint}
 \begin{split}
 C^\eps+sD={}&4s(4s+\eps^2)h''
       +[8(n+1)s+2(n+2)\eps^2]h'-4h+2(n-1)(h+2s h')^2h'.
 \end{split}
\end{equation}
\end{proposition}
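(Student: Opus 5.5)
This is a direct computation, organised as follows. Write $s=|x|^2$ and $v=x_1h(s)$. Then
\[
Dv=h\,e_1+2x_1h'\,x,\qquad D^2v=2h'(e_1\otimes x+x\otimes e_1)+2x_1h'\,I_n+4x_1h''\,x\otimes x .
\]
The Laplacian formula follows at once by taking the trace: $\Delta v=4x_1h'+2nx_1h'+4x_1 s h''$.

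For $\Ne v=(|Dv|^2+4s+\eps^2)\Delta v-\langle D^2v\,Dv,Dv\rangle-4\langle Dv,x\rangle$ I will compute three auxiliary quantities.
\begin{itemize}
\item[(a)] $|Dv|^2=h^2+4x_1^2hh'+4x_1^2s(h')^2$.
\item[(b)] $\langle Dv,x\rangle=x_1(h+2sh')$.
\item[(c)] $\langle D^2v\,Dv,Dv\rangle=4h'\langle Dv,e_1\rangle\langle Dv,x\rangle+2x_1h'|Dv|^2+4x_1h''\langle Dv,x\rangle^2$, where $\langle Dv,e_1\rangle=h+2x_1^2h'$.
\end{itemize}
Every term carries exactly one factor $x_1$ times a polynomial in $x_1^2$. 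I then divide by $x_1$ and collect powers of $x_1^2$.

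The $x_1^0$ part should give $C^\eps$. Concretely, the terms are
\[
(h^2+4s+\eps^2)(2(n+2)h'+4sh'')-4hh'(h+2sh')-2h'h^2-4h(h+2sh')\cdot 0\ \ldots
\]
I will track these carefully. The $x_1^4$ contributions should cancel: $4x_1^2(hh'+s(h')^2)\cdot 4sh''$ against $4h''\cdot x_1^2\cdot 4s\cdot\ldots$. What remains in degree $x_1^2$ should give $D$. This bookkeeping is the main obstacle. If a stray $x_1^4$ term survives, I will recheck the expansion of $\langle Dv,x\rangle^2$, since that expansion is exact.

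For the "in particular" part, the signs $h>0$, $h'>0$, $h''<0$ make each summand of $D$ positive, so $D>0$. This gives $C^\eps<C^\eps+x_1^2D$ wherever $x_1\neq0$, and equality at $x_1=0$. I read the strict inequality as holding off the hyperplane $\{x_1=0\}$, where $\Ne v/x_1$ is understood by continuity.

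Since $x_1^2\le s$ and $D>0$, we get $C^\eps+x_1^2D\le C^\eps+sD$. Finally, I will expand $C^\eps+sD$. The $h''$ terms combine as
\[
4s(h^2+4s+\eps^2)-4sh^2=4s(4s+\eps^2),
\]
and the $(h')^2$ and $(h')^3$ terms regroup as
\[
2(n-1)h^2h'+8nsh(h')^2-8sh(h')^2+8(n-1)s^2(h')^3=2(n-1)h'(h+2sh')^2 .
\]
This yields \eqref{eq:barrier-endpoint}.
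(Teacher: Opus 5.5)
Your proposal is correct and is the same direct computation the paper uses: the same formulas for $Dv$, $D^2v$, $|Dv|^2$, $\langle x,Dv\rangle$ and $\langle D^2v\,Dv,Dv\rangle$, followed by collecting powers of $x_1$. Your reading of the strict inequality as holding only on $\{x_1>0\}$ is also right.

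Two bookkeeping slips need fixing when you write it out. After dividing by $x_1$ there are no $x_1^4$ terms at all; the cancelling pair $\pm16x_1^2\bigl(shh'h''+s^2(h')^2h''\bigr)$ lies in the $x_1^2$ coefficient. Also, the $x_1^0$ part must contain $-4(h+2sh')$ from the drift term $-4\langle Dv,x\rangle$, not a zero term. That term supplies the $-4h$ in $C^\eps$ and turns $8(n+2)sh'$ into $8(n+1)sh'$.
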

\begin{proof}
Direct differentiation gives
\begin{align*}
 Dv&=he_1+2x_1h'x,\\
 D^2v&=2h'(e_1\otimes x+x\otimes e_1+x_1I_n)+4x_1h''x\otimes x,\\
 |Dv|^2&=h^2+4x_1^2hh'+4x_1^2|x|^2(h')^2,\qquad
 \langle x,Dv\rangle=x_1(h+2|x|^2 h').
\end{align*}
Tracing the second equation gives the Laplacian in \eqref{eq:barrier-profile}. Moreover,
\[
 \langle D^2v\,Dv,Dv\rangle
 =4h'v_1\langle x,Dv\rangle+2x_1h'|Dv|^2
   +4x_1h''\langle x,Dv\rangle^2.
\]
Therefore, the rest of the claims easily follow.
\end{proof}

\begin{proposition}\label{prop:barrier-upper}
For every $n\geq2$ and for every $\eps\in(0,1]$, the smooth function
\[
 U(x)\coloneqq x_1\left[1+\left(1+\frac{|x|^2}{4}\right)^\gamma\right]
\]
satisfies
\begin{equation}\label{eq:barrier-upper}
 \Ne U\leq 0,\qquad
 4^{-\gamma}x_1(1+|x|^2)^\gamma
 \leq U(x)\leq2x_1(1+|x|^2)^\gamma\qquad \text{on $\{x_1>0\}$.}
\end{equation}
\end{proposition}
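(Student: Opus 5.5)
We apply \Cref{prop:barrier-profile} with the profile $h(s)=1+(1+s/4)^\gamma$, so that $U=x_1h(|x|^2)$. For $s\geq0$,
\[
h'=\tfrac{\gamma}{4}(1+s/4)^{\gamma-1}>0,\qquad h''=\tfrac{\gamma(\gamma-1)}{16}(1+s/4)^{\gamma-2}<0,
\]
and $h>0$. Hence \eqref{eq:cepsin} applies on $\{x_1>0\}$, and it suffices to prove $C^\eps+sD\leq 0$ for every $s\geq0$ and $\eps\in(0,1]$. By \eqref{eq:barrier-endpoint} this quantity is affine in $\eps^2$, namely
\[
C^\eps+sD=\bigl(C^0+sD\bigr)+\eps^2\bigl(4sh''+2(n+2)h'\bigr).
\]
So the plan is to check the sign of the $\eps$-free part and of the coefficient of $\eps^2$, or more simply to verify nonpositivity at $\eps=0$ and at $\eps=1$ and conclude by linearity.

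Set $\tau=1+s/4\geq1$, so that $s=4(\tau-1)$. The coefficient of $\eps^2$ is
\[
4sh''+2(n+2)h'=\gamma\tau^{\gamma-2}\bigl[(\gamma-1)(\tau-1)+\tfrac{n+2}{2}\tau\bigr].
\]
This is positive, so the $\eps^2$ term does not help. The key comparison is therefore at $\eps=1$, where we must beat it with the term $-4h\leq -8$. We expand everything in powers of $\tau^\gamma$:
\begin{align*}
4s\cdot 4s\,h''&=16\gamma(\gamma-1)(\tau-1)^2\tau^{\gamma-2},\\
8(n+1)s\,h'&=8(n+1)\gamma(\tau-1)\tau^{\gamma-1},\\
h+2sh'&=1+\tau^\gamma+2\gamma(\tau-1)\tau^{\gamma-1}.
\end{align*}
The cubic term $2(n-1)(h+2sh')^2h'$ behaves like $\frac{(n-1)\gamma}{2}(1+2\gamma)^2\tau^{3\gamma-1}$ for large $\tau$. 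Since $\gamma<1/3$, we have $3\gamma-1<0$, so it decays. The dominant terms at infinity are of order $\tau^\gamma$:
\[
16\gamma(\gamma-1)\tau^\gamma+8(n+1)\gamma\tau^\gamma-4\tau^\gamma=4\tau^\gamma\bigl(4\gamma^2+2(n-1)\gamma+4\gamma\cdot\tfrac{n+1-(n-1)}{2}\cdot\ldots\bigr).
\]
Using the identity $4\gamma^2+2(n-1)\gamma=1$ from \eqref{eq:barrier-gamma}, the combination simplifies to $4\tau^\gamma\bigl(4\gamma^2-4\gamma+2(n+1)\gamma-1\bigr)=4\tau^\gamma\cdot 8\gamma\cdot\tfrac12\cdots$. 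I would recompute this carefully: the choice of $\gamma$ is precisely what makes the leading coefficient vanish or become negative, with the remaining lower-order contributions, including $-4$ from the constant $1$ in $h$, providing strict negativity. The main obstacle is exactly this bookkeeping. One must show that a one-variable function of $\tau\geq1$, built from the powers $\tau^\gamma,\tau^{\gamma-1},\tau^{\gamma-2},\tau^{2\gamma-1},\tau^{3\gamma-1}$ and constants, is nonpositive, uniformly in $n\geq2$. I would first check $\tau=1$, where $s=0$ and the expression reduces to $2(n+2)\eps^2h'(0)-4h(0)+2(n-1)h(0)^2h'(0)$, i.e. $\frac{(n+2)\gamma\eps^2}{2}-8+2(n-1)\gamma$. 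This is negative because $2(n-1)\gamma<1$ and $(n+2)\gamma<2$. Then I would bound each term for $\tau\geq1$, using $\tau^{3\gamma-1}\leq\tau^\gamma$, $\gamma<1/3$ (and $\gamma<1/4$ for $n\geq3$), and $(1-2\gamma)^2=2(1-(n+1)\gamma)$ to absorb the positive cubic and $\eps^2$ terms into the negative terms $-4$ and $16\gamma(\gamma-1)(\tau-1)^2\tau^{\gamma-2}$.

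Finally, the two-sided estimate in \eqref{eq:barrier-upper} is elementary. The upper bound follows from $1\leq(1+|x|^2)^\gamma$ and $(1+|x|^2/4)^\gamma\leq(1+|x|^2)^\gamma$, which give $U\leq 2x_1(1+|x|^2)^\gamma$. The lower bound follows from $1+|x|^2/4\geq\frac14(1+|x|^2)$, which gives $U\geq x_1(1+|x|^2/4)^\gamma\geq 4^{-\gamma}x_1(1+|x|^2)^\gamma$ on $\{x_1>0\}$.
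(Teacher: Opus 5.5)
Your reduction matches the paper's. With $h(s)=1+(1+s/4)^\gamma$, \Cref{prop:barrier-profile} and $x_1^2\leq s$ reduce $\Ne U\leq0$ to $C^\eps+sD\leq0$, and your growth bounds for $U$ are correct. The gap is that the core inequality $C^\eps+sD\leq0$ is never proved. You leave it as ``bookkeeping'', and the intermediate claims you make point the wrong way.

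The coefficient of $\tau^\gamma$ in $16s^2h''+8(n+1)sh'-4h$ is $16\gamma^2+8(n-1)\gamma-4=4\bigl(4\gamma^2+2(n-1)\gamma-1\bigr)$, which is exactly $0$. You equate it to a nonzero multiple of $\gamma$ and then hedge between ``vanish or become negative''. The missing step is the exact identity
\[
16s^2h''+8(n+1)sh'-4h=-4-4\tau^{\gamma-2}\bigl[1+(1-2\gamma)^2(\tau-1)\bigr].
\]
It follows by expanding in $\tau-1$. The quadratic coefficient vanishes by $4\gamma^2+2(n-1)\gamma=1$. The linear coefficient is $8(n+1)\gamma-8=-4(1-2\gamma)^2$ by the last relation in \eqref{eq:barrier-gamma}.

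This identity also shows why your absorption plan cannot work. The negative term $16\gamma(\gamma-1)(\tau-1)^2\tau^{\gamma-2}$ and the $-4\tau^\gamma$ part of $-4h$ are entirely used up cancelling the growing term $8(n+1)sh'$. So the only non-decaying negative budget is the constant $-4$ coming from the $1$ in $h$, not ``$-4h\leq-8$''.

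Consequently the positive terms must be bounded by absolute constants, not by multiples of $\tau^\gamma$. Your bound $\tau^{3\gamma-1}\leq\tau^\gamma$ would leave a growing positive term with nothing left to absorb it. What works is the following. First, $\eps^2[4sh''+2(n+2)h']\leq 2(n+2)h'\leq\frac{(n+2)\gamma}{2}<\frac34$, using $(n-1)\gamma<\frac12$ and $3\gamma<1$. Second, since $h+2sh'\leq(2+2\gamma)\tau^\gamma$,
\[
2(n-1)(h+2sh')^2h'\leq\frac{(n-1)\gamma}{2}(2+2\gamma)^2\tau^{3\gamma-1}<2,
\]
using $3\gamma-1<0$, $\tau\geq1$ and $(2+2\gamma)^2<8$. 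Hence $C^\eps+sD<-4+\frac34+2<0$.

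Your check at $\tau=1$ is correct. It says nothing about large $\tau$, however, and that is exactly where the cancellation forced by the choice of $\gamma$ is needed.
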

\begin{proof}
Set $h(s)=1+(1+s/4)^\gamma$ and $t=1+s/4\geq1$. In the following, for the sake of notational simplicity, we let $s=|x|^2.$ Thus
\begin{equation*}
    h'=\frac\gamma4t^{\gamma-1}>0,\qquad h''=\frac{\gamma(\gamma-1)}{16}t^{\gamma-2}<0.
\end{equation*}
Since $x_1^2\leq s$, \eqref{eq:barrier-profile}, \eqref{eq:cepsin} and \eqref{eq:barrier-endpoint}
give
\[
 \frac{\Ne U}{x_1}\leq
 16s^2h''+8(n+1)s h'-4h
 +\eps^2[4s h''+2(n+2)h']
 +2(n-1)(h+2s h')^2h'.
\]
By \eqref{eq:barrier-gamma}, the first three terms combine as
\[
 16s^2h''+8(n+1)s h'-4h
 =-4-4t^{\gamma-2}[1+(1-2\gamma)^2(t-1)]<-4.
\]
Indeed, expansion in $t-1$ cancels its quadratic coefficient, while the
linear coefficient is $8(n+1)\gamma-8=-4(1-2\gamma)^2$.
Next, $(n-1)\gamma<1/2$ and $\gamma<1/3$ imply
\[
 \eps^2[4s h''+2(n+2)h']
 \leq2(n+2)h'\leq\frac{(n+2)\gamma}{2}<\frac34.
\]
Finally, $h+2s h'\leq(2+2\gamma)t^\gamma$, whence
\[
 2(n-1)(h+2s h')^2h'
 \leq\frac{(n-1)\gamma}{2}(2+2\gamma)^2t^{3\gamma-1}<2.
\]
Here $(2+2\gamma)^2<(8/3)^2<8$ and $3\gamma-1<0$.
Consequently $\Ne U/x_1<-4+3/4+2<0$. The growth bounds follow from
$(1+s)/4\leq1+s/4\leq1+s$ and $1\leq(1+s)^\gamma$.
\end{proof}

\begin{proposition}\label{prop:barrier-lower-higher}
For every $n\geq3$ and for every $\eps\in(0,1]$, the smooth function
\[
 u_-(x)\coloneqq x_1h(|x|^2),\qquad
 h(s)\coloneqq1+\frac{(1+2s)^\gamma-1}{2\gamma}
\]
satisfies
\begin{equation}\label{eq:barrier-lower-higher}
 \Ne u_-\geq 0,\qquad
 x_1\leq u_-(x)\leq\frac{2^\gamma}{2\gamma}x_1(1+|x|^2)^\gamma\qquad\text{on }\{x_1\ge 0\}.
\end{equation}
\end{proposition}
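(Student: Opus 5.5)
The plan is to reduce everything to \Cref{prop:barrier-profile}. The growth bounds in \eqref{eq:barrier-lower-higher} are elementary. Bernoulli's inequality gives $(1+2s)^\gamma\geq 1$, hence $h\geq1$ and $u_-\geq x_1$ on $\{x_1\geq0\}$. For the upper bound, $1-\frac1{2\gamma}<0$ and $(1+2s)^\gamma\leq 2^\gamma(1+s)^\gamma$ give $h(s)\leq \frac{(1+2s)^\gamma}{2\gamma}\leq\frac{2^\gamma}{2\gamma}(1+s)^\gamma$. For the differential inequality, set $\tau=1+2s\geq1$. Then
\[
h=1+\frac{\tau^\gamma-1}{2\gamma},\qquad h'=\tau^{\gamma-1}>0,\qquad h''=2(\gamma-1)\tau^{\gamma-2}<0,
\]
so \eqref{eq:cepsin} gives $\Ne u_-/x_1> C^\eps(s)$ on $\{x_1>0\}$. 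It therefore suffices to prove $C^\eps\geq0$ for all $s\geq0$ and $\eps\in(0,1]$. By oddness of $u_-$ in $x_1$ and continuity, this yields the inequality on $\{x_1\geq0\}$ up to $x_1=0$, which is what the barrier argument needs.

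I would split $C^\eps$ into three groups.

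First, the $\eps^2$-terms are $\eps^2[4sh''+2(n+2)h']=\eps^2\tau^{\gamma-2}[4(\gamma-1)(\tau-1)+2(n+2)\tau]$. This is nonnegative, since $2(n+2)+4(\gamma-1)>0$. So $\eps$ can be dropped and it suffices to treat $\eps=0$.

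Second, the terms without $h^2$ or $(h')^2$ are
\[
16s^2h''+8(n+1)sh'-4h=8(\gamma-1)(\tau-1)^2\tau^{\gamma-2}+4(n+1)(\tau-1)\tau^{\gamma-1}-4-\tfrac{2}{\gamma}(\tau^\gamma-1).
\]
Third, the cubic terms are
\[
4sh^2h''+2(n-1)h^2h'-8sh(h')^2=hh'\Big[h\Big(2(n-1)+4(\gamma-1)\tfrac{\tau-1}{\tau}\Big)-4(\tau-1)\tau^{\gamma-1}\Big].
\]

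The key algebraic input is \eqref{eq:barrier-gamma}, in the form $\frac{2}{\gamma}=8\gamma+4(n-1)$. With it, the coefficient of $\tau^\gamma$ in the second group is $8(\gamma-1)+4(n+1)-\frac2\gamma=0$. So the leading growth cancels exactly, and one is left with terms of order $\tau^{\gamma-1}$, $\tau^{\gamma-2}$, constants, and the cubic group, which is of order $\tau^{3\gamma-1}$. Since $n\geq3$ forces $\gamma<\frac14$, the cubic group decays, while the remainder of the second group is an explicit combination. Multiplying through by $\tau^{2-\gamma}$, or working with the variable $t=\tau^\gamma$, turns $C^0\geq0$ into a one-variable inequality on $[1,\infty)$. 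I would check this inequality first at $\tau=1$. There $s=0$, $h=1$, $h'=1$, and $C^0=2(n-1)-4=2n-6\geq0$ for $n\geq3$, which explains the restriction $n\geq3$. I would then show the expression is nondecreasing in $\tau$, or control it by comparing each group with its value at $\tau=1$. In doing so I would use the lower bounds $h\geq1$ and $h\geq\tau^\gamma/(2\gamma)\cdot(1-\text{correction})$, together with $2(n-1)+4(\gamma-1)\frac{\tau-1}\tau\geq 2(n-1)+4\gamma-4\geq 4\gamma$ when $n\geq3$.

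I expect the main obstacle to be this final one-variable inequality. The cancellation of the leading $\tau^\gamma$ terms means no crude asymptotic estimate works, and both the identity \eqref{eq:barrier-gamma} and the smallness $\gamma<\frac14$ must be used sharply. I would first try to write $C^0(\tau)-(2n-6)$ as $\int_1^\tau$ of a derivative and show that the integrand is nonnegative term by term after grouping positive and negative contributions. If that fails, I would bound the negative cubic piece $-4hh'(\tau-1)\tau^{\gamma-1}$ by the positive term $2(n-1)h^2h'$, using $h\geq 1+\frac{\tau^\gamma-1}{2\gamma}\geq$ a multiple of $(\tau-1)\tau^{\gamma-1}$. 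I would then handle the lower-order leftover of the second group separately, using $\tau\geq1$.
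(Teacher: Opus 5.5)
Your reduction matches the paper's and is correct. This covers the growth bounds, the use of \eqref{eq:cepsin} (valid since $h'>0>h''$), and the observation that the $\eps^2$-part of $C^\eps$ is $\eps^2\Delta u_-/x_1\geq0$, so only $C^0\geq0$ matters. It also covers the cancellation of the $\tau^\gamma$-coefficient via $2/\gamma=8\gamma+4(n-1)$ and the value $C^0=2n-6$ at $\tau=1$.

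The gap is that $C^0\geq0$ on $[1,\infty)$ is the entire content of the proposition, and you do not prove it. You list strategies, and the concrete ones fail.

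\textbf{Monotonicity.} $C^0$ is not monotone in $\tau$ in general. For $n=20$ one has $C^0\to 2/\gamma-4\approx 72.2$ as $\tau\to\infty$, while $C^0\approx 81.5$ at $\tau=e^2$. So $C^0$ overshoots its limit and then decreases, and neither the \emph{nondecreasing} route nor the $\int_1^\tau$-of-a-nonnegative-integrand route can work.

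\textbf{Separate groups.} Handling the two groups separately, or comparing each with its value at $\tau=1$, also fails. Your second group equals $-4h(0)=-4$ at $\tau=1$. For $n=3$ your cubic group is negative for large $\tau$, since its leading term is $-\gamma^{-1}\tau^{3\gamma-1}$. Hence no lower bound on the cubic group alone closes the argument, including absorbing $-4hh'(\tau-1)\tau^{\gamma-1}$ into $2(n-1)h^2h'$. Since $C^0(1)=0$ when $n=3$, the groups must be traded against each other exactly, with no loss at $\tau=1$.

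\textbf{What is missing.} You need an exact rewriting of $C^0$ in which positivity becomes manifest. The paper sets $t=1+2s$ (your $\tau$), $c=1-2\gamma$ and $h=(t^\gamma-c)/(2\gamma)$. It factors the cubic terms through the identity (L) and uses $4\gamma^2+2(n-1)\gamma=1$ to collect coefficients. This produces a second exact cancellation, of the coefficient of $t^{\gamma-2}$, and leads to \eqref{giustificazione}:
\begin{equation*}
\begin{split}
2\gamma C^0={}&4(1-2\gamma)-2t^{3\gamma-1}+(10+8\gamma)t^{3\gamma-2}-8(1+2\gamma)t^{2\gamma-2}-4(1-4\gamma)t^{\gamma-1}\\
&+(n-3)\Big\{4t^{\gamma-2}\big[(t^\gamma-1)^2+\gamma(t-1)\big]+\frac1\gamma\, t^{\gamma-1}(t^\gamma-1+2\gamma)^2\Big\}.
\end{split}
\end{equation*}
The $(n-3)$-part is manifestly nonnegative for $n\geq3$. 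The $n$-independent part equals
\[
2(1-t^{3\gamma-1})+2(1-4\gamma)(1-t^{\gamma-1})^2+(10+8\gamma)t^{2\gamma-2}(t^\gamma-1),
\]
and each summand is nonnegative because $t\geq1$, $3\gamma<1$ and $\gamma<\frac14$. Thus $n\geq3$ enters not only through $C^0(1)=2n-6\geq0$ but also through $\gamma<\frac14$. Without an identity of this kind, or an equally exact substitute, your argument for $\Ne u_-\geq0$ is incomplete.
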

\begin{proof}
We have $h(0)=1$, $h'=(1+2s)^{\gamma-1}>0$ and
$h''=2(\gamma-1)(1+2s)^{\gamma-2}<0$.
Thus $D>0$ in \eqref{eq:barrier-profile}. Therefore we get \eqref{eq:cepsin} for $v=u_-$. Since 
\[
\Ne u_-= \mathcal{N}^0u_-+ \eps^2 \Delta  u_-,
\]
and \eqref{eq:barrier-profile} yields
\[
 \frac{\Delta u_-}{x_1}
 =2(1+2s)^{\gamma-2}[n+2+(2n+4\gamma)s]>0,
\]
we have $\frac{\Ne v}{x_1} \ge C^0$.
If $C^0\geq0$, this proves the strict subsolution inequality.
For this calculation set $t=1+2s$ and $c=1-2\gamma$, so that
$h=(t^\gamma-c)/(2\gamma)$. Direct substitution gives
\begin{equation}
\label{eq:2C0start}
 \begin{split}
 2\gamma C^0={}&4c-4t^{\gamma-2}[1+c^2(t-1)]\\
 &+\frac1\gamma t^{\gamma-2}(t^\gamma-c)
 \big\{(n-1)(t^\gamma-c)t
       -2[(\gamma+1)t^\gamma+(\gamma-1)c](t-1)\big\}.
 \end{split}
\end{equation}
The first line collects $16s^2h''+8(n+1)s h'-4h$; the second collects
$4s h^2h''+2(n-1)h^2h'-8s h(h')^2$, each multiplied by $2\gamma$. The defining relation $4\gamma^2+2(n-1)\gamma=1$ gives
\[
  \frac1\gamma=4+4\gamma+2(n-3),\qquad
  2(n-3)\gamma=1-4\gamma-4\gamma^{2}.
\]

\medskip
\noindent
We first simplify the second line of \eqref{eq:2C0start}. Let
$R:=\frac1\gamma\,t^{\gamma-2}(t^\gamma-c)\,B$, where
\[
  B=(n-1)(t^\gamma-c)\,t-2\big((\gamma+1)t^\gamma+(\gamma-1)c\big)(t-1).
\]
Then $B=2\Theta+(n-3)\,t\,(t^\gamma-c)$ with
\[
  \Theta:=t(t^\gamma-c)-\big((\gamma+1)t^\gamma+(\gamma-1)c\big)(t-1)
  =-\gamma t^{\gamma+1}+(\gamma+1)t^{\gamma}-\gamma c\,t+(\gamma-1)c .
\]
Multiplying by $t^\gamma-c$ and collecting terms gives the identity
\begin{equation}\tag{L}
  (t^\gamma-c)\,\Theta=(t^\gamma-c)^2-\gamma\,(t-1)\,(t^{2\gamma}-c^{2}).
\end{equation}
Indeed, the left-hand side equals
$-\gamma t^{2\gamma+1}+(\gamma+1)t^{2\gamma}-2c\,t^{\gamma}+\gamma c^2 t+(1-\gamma)c^2$,
and the right-hand side expands to the same expression. Hence
\[
  R=\frac{2+(n-3)t}{\gamma}\,t^{\gamma-2}(t^\gamma-c)^2-2(t-1)\,t^{\gamma-2}(t^{2\gamma}-c^2).
\]

\medskip
\noindent
The first line of \eqref{eq:2C0start} is $4c-4t^{\gamma-2}\big[1+c^2(t-1)\big]$. In the sum with $R$ the terms in $c^2$ combine as $-4c^2+2c^2=-2c^2$, so
\begin{equation}\label{A}
  2\gamma C^0=4c-4t^{\gamma-2}-2c^2(t-1)\,t^{\gamma-2}-2(t-1)\,t^{3\gamma-2}
  +\frac{2+(n-3)t}{\gamma}\,t^{\gamma-2}(t^\gamma-c)^2 .
\end{equation}

\medskip
\noindent
Next we isolate the dependence on $n-3$. Since $\frac2\gamma=8+8\gamma+4(n-3)$,
\[
  \frac{2+(n-3)t}{\gamma}\,t^{\gamma-2}(t^\gamma-c)^2
  =(8+8\gamma)\,t^{\gamma-2}(t^\gamma-c)^2
  +(n-3)\Big[4\,t^{\gamma-2}(t^\gamma-c)^2+\tfrac1\gamma\, t^{\gamma-1}(t^\gamma-c)^2\Big].
\]
Only the piece $4(n-3)\,t^{\gamma-2}(t^\gamma-c)^2$ is rewritten, so that its $(n-3)$-part has a manifestly nonnegative form. Using $t^\gamma-c=t^\gamma-1+2\gamma$, we have
\[
  (t^\gamma-c)^2-(t^\gamma-1)^2-\gamma(t-1)=\gamma\,[\,4t^\gamma-t-3+4\gamma\,],
\]
and therefore
\[
  4(n-3)\,t^{\gamma-2}(t^\gamma-c)^2
  =4(n-3)\,t^{\gamma-2}\big[(t^\gamma-1)^2+\gamma(t-1)\big]
  +4(n-3)\gamma\big[\,4t^{2\gamma-2}-t^{\gamma-1}-(3-4\gamma)\,t^{\gamma-2}\big].
\]

\medskip
\noindent
Inserting this into \eqref{A} and expanding $(t^\gamma-c)^2=t^{2\gamma}-2c\,t^\gamma+c^2$, the coefficients of the powers of $t$ in the part not containing $(n-3)$ are:
\begin{align*}
   -2 & \quad \text{for }\quad t^{3\gamma-1},\\
   2+(8+8\gamma)=10+8\gamma &\quad \text{for }\quad t^{3\gamma-2},\\
  -16(1+\gamma)c+16(n-3)\gamma=-8(1+2\gamma) &\quad \text{for }\quad  t^{2\gamma-2},\\
  -2c^2-4(n-3)\gamma=-4(1-4\gamma) &\quad \text{for }\quad t^{\gamma-1}, \\
  -4+(10+8\gamma)c^2-4(n-3)\gamma(3-4\gamma)=0 &\quad \text{for }\quad t^{\gamma-2},\\
  4c=4(1-2\gamma)&\quad \text{for }\quad t^{0}.
\end{align*}
The vanishing of the coefficient of $t^{\gamma-2}$ follows from
$(10+8\gamma)c^2=10-32\gamma+8\gamma^2+32\gamma^3=4(n-3)\gamma(3-4\gamma)+4$.
We thus obtain 
\begin{equation}\label{giustificazione}
 \begin{split}
 2\gamma C^0={}&4(1-2\gamma)-2t^{3\gamma-1}
 +(10+8\gamma)t^{3\gamma-2}-8(1+2\gamma)t^{2\gamma-2}
 -4(1-4\gamma)t^{\gamma-1}\\
 &+(n-3)\left\{
 4t^{\gamma-2}[(t^\gamma-1)^2+\gamma(t-1)]
 +\frac1\gamma t^{\gamma-1}(t^\gamma-1+2\gamma)^2\right\}.
 \end{split}
\end{equation}
The expression in braces is $\geq 0$ because $t\ge1$ and $\gamma>0$. then we have 
\[
 2\gamma C^0 \geq 4(1-2\gamma)-2t^{3\gamma-1}
 +(10+8\gamma)t^{3\gamma-2}-8(1+2\gamma)t^{2\gamma-2}
 -4(1-4\gamma)t^{\gamma-1}
\]
Setting $q=t^{\gamma-1}$, so that $q^2=t^{2\gamma-2}$ we get 
\[
  2\gamma C^0 \geq 2\big(1-t^{3\gamma-1}\big)+2(1-4\gamma)(1-q)^2+(10+8\gamma)\,t^{2\gamma-2}\big(t^{\gamma}-1\big),
\]
which is checked by comparing coefficients: the constant term is $2+2(1-4\gamma)=4(1-2\gamma)$; the coefficient of $q$ is $-4(1-4\gamma)$; the coefficient of $q^2$ is $2(1-4\gamma)-(10+8\gamma)=-8(1+2\gamma)$. All three summands are nonnegative, since $3\gamma<1$, $\gamma<\tfrac14$ (this is another place where $n\geq 3$ is used), $\gamma>0$ and $t\geq1$. Hence
\[
  2\gamma C^0\ \geq \ 0 .
\]
The other assertions are immediate from the formula.
\end{proof}
In the previous proof, the constraint $n\geq 3$ was essential to discard the last non-negative contribution in \eqref{giustificazione}. Accordingly, the case $n=2$ must be treated separately.
\begin{proposition}\label{prop:barrier-lower-planar}
Let $n=2$ and $\delta=3\gamma-2$. For every $\eps\in(0,1]$, the smooth function
\[
 u_-(x)=x_1(1+8|x|^2)^\gamma+3x_1^3(1+8|x|^2)^\delta
\]
satisfies
\begin{equation}\label{subsoluzionejernfvojnttijvnrtvjnr}
   \Ne u_-\geq 0,\qquad x_1\leq u_-\leq 3x_1(1+|x|^2)^\gamma,  \qquad \text{on }\{x_1\ge 0\}.
\end{equation}
\end{proposition}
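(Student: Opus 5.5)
The plan is to verify the two claims in \eqref{subsoluzionejernfvojnttijvnrtvjnr} separately, with the growth bounds being elementary and the subsolution inequality requiring an explicit but structured computation. For the bounds, write $a\coloneqq 1+8|x|^2\geq 1$ and note $\delta=3\gamma-2<0$. On $\{x_1\geq0\}$ both summands of $u_-$ are nonnegative, so $u_-\geq x_1a^\gamma\geq x_1$. For the upper bound, use $x_1^2\leq |x|^2\leq a/8$ to get $3x_1^3a^\delta\leq \tfrac38 x_1 a^{\delta+1}=\tfrac38x_1a^{\gamma}a^{2\gamma-1}\leq \tfrac38 x_1a^\gamma$, because $2\gamma-1<0$. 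Then $u_-\leq \tfrac{11}{8}x_1a^\gamma\leq \tfrac{11}{8}8^\gamma x_1(1+|x|^2)^\gamma$. Since $\gamma=(\sqrt5-1)/4\approx0.309$ for $n=2$, we have $8^\gamma<2$, and the constant is below $3$.

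For the subsolution property, I would split as in the proof of \Cref{prop:barrier-lower-higher}: $\Ne u_-=\mathcal N^0u_-+\eps^2\Delta u_-$, and prove separately that $\Delta u_-\geq0$ and $\mathcal N^0u_-\geq0$ on $\{x_1\geq0\}$. This makes the bound uniform in $\eps\in(0,1]$. Write $u_-=x_1h(s)+x_1^3k(s)$ with $s=|x|^2$, $h=a^\gamma$, $k=3a^\delta$, $a=1+8s$. Then compute, generalizing the proof of \Cref{prop:barrier-profile}:
\[
Du_-=(h+3x_1^2k)e_1+2x_1(h'+x_1^2k')x,
\]
\[
D^2u_-=2(h'+3x_1^2k')(e_1\otimes x+x\otimes e_1)+6x_1k\,e_1\otimes e_1+2x_1(h'+x_1^2k')I_2+4x_1(h''+x_1^2k'')x\otimes x .
\]
Since $n=2$, $\Delta u_--\langle D^2u_-\,\nu,\nu\rangle$ (with $\nu$ unit along $Du_-$) equals the second derivative in the direction orthogonal to $Du_-$. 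This should simplify $\mathcal N^0u_-=|Du|^2\Delta u-\langle D^2u\,Du,Du\rangle+4|x|^2\Delta u-4\langle Du,x\rangle$ to the form $\langle D^2u_-\,Du_-^\perp,Du_-^\perp\rangle+4s\Delta u_--4\langle Du_-,x\rangle$. The Laplacian part reduces to $x_1$ times an explicit combination of powers of $a$ with coefficients linear in $x_1^2$, and its positivity should follow as in \Cref{prop:barrier-lower-higher}, using $x_1^2\leq s$.

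After dividing by $x_1$, I would set $\tau=x_1^2/s\in[0,1]$ and use $a\geq1$ as the second variable. This turns $\mathcal N^0u_-/x_1$ into a polynomial in $\tau$ (of degree at most three) whose coefficients are combinations of $a^{\gamma},a^{\delta}$ and their powers. I would then multiply through by a suitable power of $a$ and use the identity $4\gamma^2+2\gamma=1$ from \eqref{eq:barrier-gamma} to cancel the leading large-$s$ terms. The role of the cubic correction $3x_1^3a^\delta$ is precisely to supply the positive contribution that the $(n-3)$-bracket in \eqref{giustificazione} gave when $n\geq3$, which is missing for $n=2$. The exponent $\delta=3\gamma-2$ is chosen so that this correction is homogeneous of the same order as the defect term $-2t^{3\gamma-1}$ at infinity.

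The main obstacle is this last sign check: proving that a two-variable expression in $(\tau,a)\in[0,1]\times[1,\infty)$ is nonnegative. I would first try to group it, as in the end of the proof of \Cref{prop:barrier-lower-higher}, into a sum of manifestly nonnegative pieces such as $(1-a^{3\gamma-1})$, squares like $(1-a^{\gamma-1})^2$, and $a^{p}(a^\gamma-1)$, together with $\tau$ and $1-\tau$ factors. If that fails, I would fall back on showing that the expression is monotone or concave in $\tau$, which reduces the check to the endpoints $\tau=0$ and $\tau=1$. Each endpoint is a one-variable inequality in $q=a^{\gamma-1}\in(0,1]$, to be verified with the explicit value $\gamma=(\sqrt5-1)/4$.
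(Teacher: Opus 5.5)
Your proof of the growth bounds is correct, and slightly sharper than needed since $\tfrac{11}{8}\,8^\gamma<3$. Your plan for the differential inequality follows the paper's: split $\Ne u_-=\mathcal N^0u_-+\eps^2\Delta u_-$, pass to $\tau=x_1^2/|x|^2$ and a radial variable, and reduce to $\tau\in\{0,1\}$ by concavity. But the proposal stops where the content of the proposition begins: neither $\mathcal N^0u_-\geq0$ nor $\Delta u_-\geq0$ is actually proved. Even the Laplacian does not go ``as in'' the case $n\geq3$, where $\Delta u_-/x_1$ was manifestly positive. Here $\Delta u_-/x_1=8h'+4|x|^2h''+6k+16x_1^2k'+4x_1^2|x|^2k''$ has $h''<0$ and $k'<0$. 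For $\tau=1$ and $|x|\to\infty$ the $k$-part alone is negative, with leading coefficient $18+12\delta(3+\delta)<0$. So you must check that the $h$-part dominates everywhere. The paper does this by writing the quotient as an affine function of $\eta=a^{2\gamma-1}$, with coefficients polynomial in $z=a^{-1}$.

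For $\mathcal N^0u_-$, your description of the final sign check is inaccurate in ways that matter.
\begin{itemize}
\item After division by $x_1$ the expression has degree four in $\tau$, not three. For example, the piece $2x_1(h'+x_1^2k')|Du_-|^2$ of the cubic part produces $8\tau^4|x|^{10}(k')^3$, and the paper's polynomial indeed contains $\tau^4w^3D_3$ with $D_3\not\equiv0$. So concavity in $\tau$ is a genuine inequality to prove; the paper gets $P_{\tau\tau}\le0$ from the signs of $D_1+wC_2$ and $D_2+wC_3+2\tau wD_3$.
\item More importantly, the coefficients involve two incommensurable powers of $a=1+8|x|^2$. One is $z=a^{-1}$, from $|x|^2=(1-z)/(8z)$. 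The other is $\eta=a^{2\gamma-1}$, from the ratio $3x_1^2a^{\delta-\gamma}=\tfrac38\tau(1-z)\eta$ of the cubic correction to the main term. Since $\gamma=(\sqrt5-1)/4$ is irrational, the endpoint inequalities are not polynomial in any single power of $a$, including your $q=a^{\gamma-1}$. Checking them ``with the explicit value of $\gamma$'' would at best be numerical.
\end{itemize}

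The missing idea is the paper's decoupling.
\begin{itemize}
\item It proves the exact identity $\frac{z^\gamma}{x_1\eta}\mathcal N^0u_-=P(z,w,\tau)+4[2-6\gamma+(6\gamma-1)z](1-2\gamma+2\gamma z-z^{2\gamma})$, with $w=(1-z)\eta$ and $P$ an explicit polynomial.
\item The second summand is discarded via the tangent-line bound $z^{2\gamma}\le1-2\gamma+2\gamma z$.
\item $w$ is then treated as independent of $z$, ranging in $[0,\tfrac12]$ because $z\ge\eta^3$.
\end{itemize}
Only after this relaxation is the problem polynomial on the box $[0,1]\times[0,\tfrac12]\times[0,1]$. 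There, concavity in $\tau$, and then concavity in $w$ at $\tau=1$ (from $C_2+D_2<0$, $C_3+D_3<0$), reduce it to one-variable polynomial checks: $P(z,w,0)\ge0$, $P(z,0,1)\ge P(1,0,1)>0$ and $P(z,\tfrac12,1)\ge P(1,\tfrac12,1)>0$. Without this, or an equivalent explicit verification, your text is a plan for the subsolution inequality rather than a proof of it.
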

\begin{proof}
 Since $n=2$, a direct computation yields
\begin{equation}\label{eq_planar_exponent_bounds}
  4\gamma^2+2\gamma=1,\qquad \frac{309}{1000}<\gamma<\frac{31}{100},\qquad
 -\frac{11}{10}<\delta<-1.
\end{equation}
When $x_1\geq 0$, $u_-\geq x_1$ by definition.
Since 
\begin{equation}\label{splittingdeglioperatioriermerijn}
    \Ne u_-= \mathcal{N}^0u_-+ \eps^2 \Delta  u_-,
\end{equation}
we estimate separately the signs of the two differential operators.

\smallskip
\noindent\textbf{Step 1.}
Fix $x\in\rr^2$ with $x_1>0$. 
Define $V(s)=(1+8s)^\gamma$ and $W(s)=(1+8s)^\delta$, so that 
\begin{equation*}
    u_-(x)=x_1 V(|x|^2)+3x_1^3 W(|x|^2).
\end{equation*} 
 Then,
\begin{equation}\label{variederivatrebdin39gfnvitvnei}
 \begin{split}
(u_-)_1&=V+2x_1^2V'+9x_1^2W+6x_1^4W',\\
 (u_-)_2&=2x_1x_2(V'+3x_1^2W'),\\
 (u_-)_{11}&=6x_1V'+4x_1^3V''+18x_1W+42x_1^3W'+12x_1^5W'',\\
 (u_-)_{22}&=2x_1V'+4x_1x_2^2V''+6x_1^3W'+12x_1^3x_2^2W'',\\
 (u_-)_{12}&=2x_2V'+4x_1^2x_2V''+18x_1^2x_2W'+12x_1^4x_2W''.
 \end{split}
\end{equation}
We claim that 
\begin{equation}\label{eq_planar_numerator_identity_manonevero}
\mathcal N^0u_-
 \ge 0.
\end{equation}
Notice that 
\[
 \mathcal N^0u_-=\big((u_-)_2^2+4|x|^2\big)(u_-)_{11}
 +\big((u_-)_1^2+4|x|^2\big)(u_-)_{22}
 -2(u_-)_1(u_-)_2(u_-)_{12}-4\big(x_1(u_-)_1+x_2(u_-)_2\big).
\]
Set
\[
 z=(1+8|x|^2)^{-1},\quad \eta=z^{1-2\gamma},\quad
 w=(1-z)\eta,\quad \tau=\frac{x_1^2}{|x|^2}.
\]
Notice that $z,\eta\ge 0$. Define the following polynomials with respect to $z:$
\begin{align*}
 a(z)&=(32\gamma-12)z^2+(27-80\gamma)z+64\gamma-19,\\
 b(z)&=\tfrac12\big[(234\gamma-99)z^3+(173-448\gamma)z^2
                   +(194\gamma-82)z+52\gamma\big],\\
 C_1(z)&=3\big[(2\gamma-15)z+16\gamma+11\big],\\
 D_1(z)&=9(z-1)\big[(20\gamma-7)z+2\gamma+6\big],\\
 C_2(z)&=-\tfrac{27}{8}\big[(14\gamma+7)z+1-32\gamma\big],\\
 D_2(z)&=\tfrac98(z-1)\big[(348\gamma-99)z+34-6\gamma\big],\\
 C_3(z)&=-\tfrac{81}{64}\big[(30\gamma-1)z+13-48\gamma\big],\\
 D_3(z)&=\tfrac{27}{64}(z-1)\big[(612\gamma-199)z+46-126\gamma\big].
\end{align*}
Define
\begin{equation*}
 P(z,w,\tau)=(1-\tau)a+\tau b
       +w(\tau C_1+\tau^2D_1)+w^2(\tau^2C_2+\tau^3D_2)
       +w^3(\tau^3C_3+\tau^4D_3).
\end{equation*}
Notice that $P$ is a polynomial function with respect to $(z,w,\tau)$.
A direct computation shows that 
\begin{equation}\label{duecoseederftrgrrgt}
    x_1^2=\frac{\tau(1-z)}{8z},\qquad x_2^2=\frac{(1-\tau)(1-z)}{8z}, 
\end{equation}
and
\begin{equation}\label{formuleernciefjvntijgvntrvijnvijnvr}
    W=z\eta V,\qquad V'=8\gamma zV,\qquad V''=64\gamma(\gamma-1)z^2V,\qquad W'=8\delta zW,\qquad W''=64\delta(\delta-1)z^2W.
\end{equation}
Combining these identities with \eqref{eq_planar_exponent_bounds} and \eqref{variederivatrebdin39gfnvitvnei}, a lengthy computation grants
\begin{equation}\label{eq_planar_numerator_identity}
 \frac{z^\gamma}{x_1\eta}\mathcal N^0u_-
 =P(z,w,\tau)+4\big[2-6\gamma+(6\gamma-1)z\big]
       \left(1-2\gamma+2\gamma z-\frac z\eta\right).
\end{equation}
First, we show that the last term in \eqref{eq_planar_numerator_identity} is non-negative. Indeed, 
\begin{equation}\label{cjenrcierjvnrtgvjnrivnrvinjrivjn}
    4\big[2-6\gamma+(6\gamma-1)z\big]\geq 0,
\end{equation}
since $z\ge 0$, and $2-6\gamma\ge 0$ and $6\gamma-1\ge 0$ by \eqref{eq_planar_exponent_bounds}. Moreover, since $2\gamma\le 1$ by \eqref{eq_planar_exponent_bounds}, then $z\eta^{-1}=z^{2\gamma}$ is concave as a function of $z$. Therefore, it is bounded above by the tangent line to its graph at $(1,1)$. Namely, \begin{equation}\label{eq_planar_concavity}
 \frac z\eta=z^{2\gamma}\leq 1-2\gamma+2\gamma z.
\end{equation}
Combining \eqref{cjenrcierjvnrtgvjnrivnrvinjrivjn} and \eqref{eq_planar_concavity},
\begin{equation}\label{eq_planar_numerator_identity_2}
 \frac{z^\gamma}{x_1\eta}\mathcal N^0u_-
 \geq P(z,w,\tau).
\end{equation}
In order to conclude the proof of \eqref{eq_planar_numerator_identity_manonevero}, we provide a lower bound for $P$. To this aim, notice that $0<z<1$. Moreover, since $1-2\gamma\ge 0$ by \eqref{eq_planar_exponent_bounds}, then $0\le \eta\le 1$. In addition, $0<\tau\leq 1$. Finally,
since $2-6\gamma\ge  0$, then $\eta^3z^{-1}=z^{2-6\gamma}\leq 1$, whence $z\geq \eta^3$. Therefore,
\[
 0\leq w\leq\eta-\eta^4
 =\eta(1-\eta)+\eta^2(1-\eta^2)\leq\frac12.
\]
Therefore, we are left to provide a positive lower bound for $P$ when $(z,w,\tau)\in[0,1]\times\left[0,\frac{1}{2}\right]\times [0,1]$. Notice that $(z,w,\tau)$ are now simply meant as three independent variables, thus regardless of their original definition. 

\smallskip
\noindent\textbf{Step 2}
We claim that 
\begin{equation}\label{eq_planar_angular_endpoints}
    P(z,w,\tau)\geq\min\{P(z,w,0),P(z,w,1)\}\qquad\text{ in }[0,1]\times\left[0,\frac{1}{2}\right]\times [0,1].
\end{equation}
In order to show \eqref{eq_planar_angular_endpoints} it suffices to prove that, for any fixed $z,w$, the polynomial $\tau\mapsto P(z,w,\tau)$ is concave. To this aim, we show that $P_{\tau\tau}\leq 0$. Indeed,
\[
\begin{aligned}
P_{\tau\tau}
&=2wD_1+2w^2C_2+6\tau w^2D_2
  +6\tau w^3C_3+12\tau^2w^3D_3\\
&=2w\big[D_1+wC_2
  +3\tau w(D_2+wC_3+2\tau wD_3)\big].
\end{aligned}
\]
Since \(w,\tau\geq0\), it suffices to show that
\begin{equation}
\label{duesdifuneficjner48eirufn}
E_1(z,w)\coloneqq D_1(z)+wC_2(z)\leq0,
\qquad
E_2(z,w,\tau)\coloneqq D_2(z)+wC_3(z)+2\tau wD_3(z)\leq0.
\end{equation}
First, \eqref{eq_planar_exponent_bounds} grants that 
\begin{equation}\label{eueduseg5688}
    D_1\leq 0,\qquad D_2\leq 0.
\end{equation}
Moreover, notice that $E_1$ can be written as
\[
E_1
=(1-2w)D_1+2w\left(D_1+\frac12C_2\right).
\]
The first term is non-positive by \eqref{eueduseg5688}. Moreover, \eqref{eq_planar_exponent_bounds} yields
\begin{equation*}
    D_1(1)+\frac12C_2(1)
=\frac{27(9\gamma-4)}8\le 0,\quad D_1'(1)+\frac12C_2'(1)
=\frac{2790\gamma-333}{16}\ge 0,\quad D_1''+\frac12C_2''
=360\gamma-126\le 0.
\end{equation*}
Therefore,
\begin{equation}\label{stimasullaltro38rjrtnrecvi}
     D_1+\frac12C_2\leq  D_1(1)+\frac12C_2(1)\le 0.
\end{equation}
Combining \eqref{eueduseg5688} and \eqref{stimasullaltro38rjrtnrecvi}, we conclude that the first inequality in \eqref{duesdifuneficjner48eirufn} holds.
For the second inequality, as above, we rewrite $E_2$ as 
\begin{equation*}
    E_2=(1-2w)D_2+2w\left(D_2+\frac{1}{2}C_3+\tau D_3\right).
\end{equation*}
The first term is non-positive by \eqref{eueduseg5688}. We deal with the second one. It can be written as 
\[
D_2+\frac12C_3+\tau D_3
=(1-\tau)\left(D_2+\frac12C_3\right)
+\tau\left(D_2+\frac12C_3+D_3\right).
\]
We therefore check its signs at \(\tau=0,1\).
By \eqref{eq_planar_exponent_bounds}, it is simple to verify that both $z$-polynomials in the previous expression are quadratic, convex polynomials, both with negative values at $0$ and at $1$. By convexity, they are negative for every $z\in[0,1]$. Therefore,  we conclude that 
\[
D_2+\frac12C_3+\tau D_3
\leq 0,
\]
whence \eqref{duesdifuneficjner48eirufn} holds.
%\[
%\begin{aligned}
%\left(-D_2-\frac12C_3\right)(0)
%&=\frac{9(661-528\gamma)}{128}>0,\\
%\left(-D_2-\frac12C_3-D_3\right)(0)
%&=\frac{9(937-1284\gamma)}{128}>0,\\
%\left(-D_2-\frac12C_3\right)(1)
%=\left(-D_2-\frac12C_3-D_3\right)(1)
%&=\frac{243(2-3\gamma)}{64}>0.
%\end{aligned}
%\]
Therefore $P_{\tau\tau}\leq 0$, and 
\eqref{eq_planar_angular_endpoints} holds.

\smallskip
\noindent\textbf{Step 3.} By \eqref{eq_planar_angular_endpoints}, we are left to estimate $P(z,w,0)$ and $P(z,w,1)$. Notice that $P(z,w,0)=a(z)$. By \eqref{eq_planar_exponent_bounds}, $a$ is concave, and moreover $a(0)\ge 0$ and $a(1)\ge 0$. Therefore $P(z,w,0)=a(z)\ge 0$. Next, we prove that $P(z,w,1)\ge 0$. In this regard, we claim that 
\begin{equation}\label{eq_planar_angular_endpoints_222222}
    P(z,w,1)\geq\min\left\{P(z,0,1),P\left(z,\frac{1}{2},1\right)\right\}\qquad\text{ in }[0,1]\times\left[0,\frac{1}{2}\right].
\end{equation}
First, notice that 
\begin{equation*}
 P(z,w,1)= b
       +w( C_1+D_1)+w^2(C_2+D_2)
       +w^3(C_3+D_3).
\end{equation*}
Hence,
\[
P_{ww}(z,w,1)=2(C_2+D_2)+6w(C_3+D_3).
\]
Recalling the definition of $\delta$ and exploiting \eqref{eq_planar_exponent_bounds},
it is not difficult to show that
\[
\begin{aligned}
C_2+D_2
&=\frac94\big[3+2\delta(1-z)\big]
  \big[9\gamma-4+6\gamma\delta(1-z)\big],\\
C_3+D_3
&=\frac{27\delta}{32}\big[3+2\delta(1-z)\big]^2.
\end{aligned}
\]
The second expression is clearly negative by \eqref{eq_planar_exponent_bounds}. For the first one, 
\eqref{eq_planar_exponent_bounds} grants that 
\[
3+2\delta(1-z)\geq3+2\delta>0,
\qquad
9\gamma-4+6\gamma\delta(1-z)\leq9\gamma-4<0.
\]
Therefore \(P_{ww}(z,w,1)<0\), and 
\eqref{eq_planar_angular_endpoints_222222} follows.

\smallskip
\noindent \textbf{Step 4.}
By \eqref{eq_planar_angular_endpoints_222222}, it suffices to estimate $P(z,0,1)$ and $P\left(z,\frac{1}{2},1\right)$.
Notice that $P(z,0,1)=b(z)$.
By \eqref{eq_planar_exponent_bounds},
\begin{equation*}
b'(z)=(97\gamma-41)+(173-448\gamma)z
  +\left(351\gamma-\frac{297}{2}\right)z^2<-10+36z(1-z)\leq-1.
\end{equation*}
Therefore
\[
P(z,0,1)=b(z)\geq b(1)=4(4\gamma-1)>0.
\]
Finally, we estimate $P\left(z,\frac{1}{2},1\right)$. Since
\[
P\left(z,\frac12,1\right)
=b(z)+\frac12(C_1+D_1)
+\frac14(C_2+D_2)+\frac18(C_3+D_3),
\]
then $P\left(z,\frac{1}{2},1\right)$ has the explicit expression
\[
\frac1{512}\big[
42970\gamma-12999
 +(20264-69652\gamma)z+(8531-1972\gamma)z^2
 +(59904\gamma-25344)z^3
\big].
\]
We claim that 
\(P\left(z,\frac{1}{2},1\right)\geq P\left(1,\frac12,1\right)\).
Notice that 
\[
P\left(1,\frac12,1\right)
=\frac{31250\gamma-9548}{512}.
\]
Therefore,
\[
P\left(z,\frac12,1\right)-P\left(1,\frac12,1\right)
=\frac{1-z}{512}\big[
11720\gamma-3451
+(16813-57932\gamma)z
+(25344-59904\gamma)z^2
\big].
\]
By \eqref{eq_planar_exponent_bounds},
\[
\frac{11720\gamma-3451}{512}>\frac14,
\qquad
\frac{16813-57932\gamma}{512}>-3,
\qquad
\frac{25344-59904\gamma}{512}>12.
\]
Therefore,
\[
P\left(z,\frac12,1\right)-P\left(1,\frac12,1\right)
\geq(1-z)\left(\frac14-3z+12z^2\right)
\geq0.
\]
In conclusion,
\[
P\left(z,\frac12,1\right)
\geq P\left(1,\frac12,1\right)
=\frac{31250\gamma-9548}{512}
>0,
\]
where the last inequality follows again from
\eqref{eq_planar_exponent_bounds}. Combining this information with \eqref{eq_planar_angular_endpoints} and \eqref{eq_planar_angular_endpoints_222222}, we conclude that $P\ge 0$. In view of \eqref{eq_planar_numerator_identity_2}, \eqref{eq_planar_numerator_identity_manonevero} follows.

\smallskip
\noindent\textbf{Step 5.} In order to conclude the proof of the differential inequality in \eqref{subsoluzionejernfvojnttijvnrtvjnr}, it suffices to show that $\Delta u_-\geq0$ in view of \eqref{splittingdeglioperatioriermerijn} and \eqref{eq_planar_numerator_identity_manonevero}.
To this aim, by the above computations,
\[
\frac{\Delta u_-}{x_1}
=8V'+4|x|^2V''+18W+48x_1^2W'+12x_1^2|x|^2W''.
\]
Therefore, by \eqref{duecoseederftrgrrgt} and \eqref{formuleernciefjvntijgvntrvijnvijnvr}
\begin{equation*}
    \begin{split}
        \frac{\Delta u_-}{x_1z^{1-\gamma}}
&=64\gamma+32\gamma(\gamma-1)(1-z)+\eta\big[18+48\delta\tau(1-z)
          +12\delta(\delta-1)\tau(1-z)^2\big]\\
&=32\gamma\big[2+(\gamma-1)(1-z)\big]+\eta\big(18+12\delta\tau(1-z)
                  [4+(\delta-1)(1-z)]\big).
    \end{split}
\end{equation*}
Since \(0\leq1-z\leq1\) and \(\delta\le1\), we have
\[
4+(\delta-1)(1-z)\geq3+\delta>0,
\]
 so that, since $\delta\le 0$ and $\tau\leq 1$,
 \begin{equation*}
    \begin{split}
        \frac{\Delta u_-}{x_1z^{1-\gamma}}
\geq 32\gamma\big[2+(\gamma-1)(1-z)\big]+\eta\big(18+12\delta(1-z)
                  [4+(\delta-1)(1-z)]\big).
    \end{split}
\end{equation*}
Since the above expression is affine in $\eta$ it is bounded below by the minimum between its values at $\eta=0$ and $\eta=1$.
First, at $\eta=0$,  \eqref{eq_planar_exponent_bounds} yelds
\[
32\gamma\big[2+(\gamma-1)(1-z)\big]
\geq32\gamma(1+\gamma)
=8+16\gamma>12\ge 0.
\]
Moreover, at $\eta=1$,
\[
32\gamma\big[2+(\gamma-1)(1-z)\big]
 +18+12\delta(1-z)[4+(\delta-1)(1-z)]=
29-74\gamma+(372\gamma-110)z
+12\delta(\delta-1)z^2.
\]
Again by \eqref{eq_planar_exponent_bounds}, all the three coefficients are non-negative. Therefore, since $0\le z\le 1$, we conclude that $\Delta u_-\geq 0.$ 
 Therefore, the differential inequality in \eqref{subsoluzionejernfvojnttijvnrtvjnr} follows.
Finally, since $2\gamma-2<-1$,
\[
 \frac{u_-}{x_1(1+8|x|^2)^\gamma}
 =1+3x_1^2(1+8|x|^2)^{2\gamma-2}
 \leq1+\frac{3|x|^2}{1+8|x|^2}<\frac32.
\]
Since $(1+8|x|^2)^\gamma\leq8^\gamma(1+|x|^2)^\gamma
<2(1+|x|^2)^\gamma$, this proves the upper bound in \eqref{subsoluzionejernfvojnttijvnrtvjnr}. The thesis follows.
\end{proof}

We exploit the above barriers to prove \Cref{propunistimamain3948573883derivata}.
\begin{proof}[Proof of \Cref{propunistimamain3948573883derivata}]
Let $u$ be a solution to \eqref{eq_diri_ball_eps}, with $\eps\in(0,1]$, $R\ge 0$ and $K=K(R)$ as in \eqref{definizionekappaperheightestimates3948568}. Let $u_-$ be the lower barrier of \Cref{prop:barrier-lower-higher} or
\Cref{prop:barrier-lower-planar}, according to wether $n\geq 3$ or $n=2$. In both cases,
\begin{equation}\label{barierabassaprontissima}
    x_1\leq u_-(x)\leq C_nx_1(1+|x|^2)^\gamma
 \qquad\text{in }\{x_1\geq 0\},\qquad \Ne u_-\geq0\qquad\text{in }\{x_1\ge 0\}, 
\end{equation}
where $C_n$ is as in \eqref{defigammaecn}.
Let $U$ be as in \Cref{prop:barrier-upper}, and set
\[
 \lambda\coloneqq(4^\gamma C_n)^{\frac{1}{1-2\gamma}}\geq1,
 \qquad u_+(x)\coloneqq\lambda^2U\left(\frac x\lambda\right).
\]
Since
\begin{equation*}
    Du_+(x)=\lambda DU\left(\frac x\lambda\right),\qquad D^2u_+(x)=D^2U\left(\frac x\lambda\right),
\end{equation*} 
then
\begin{equation}\label{eq:barrier-dilation}
 \Ne u_+(x)=\lambda^2
       \left(\mathcal N^{\frac\eps\lambda}U\right)\left(\frac x\lambda\right)\leq0
       \qquad\text{on }\{x_1\ge 0\}
\end{equation}
By \Cref{prop:barrier-upper}, because $0\le \eps\lambda^{-1}\leq 1$.
Moreover, by
\eqref{eq:barrier-upper} and by definition fo $\lambda$,
\begin{equation}\label{barieraaltaprontissima}
    C_nx_1(1+|x|^2)^\gamma
 \leq u_+(x)\leq2\lambda x_1(1+|x|^2)^\gamma
 \qquad\text{in }\{x_1\geq 0\}.
\end{equation}
Set $\Om=B_R\cap \{x_1\ge 0\}$. Notice that $\partial\Om=\left(\overline B_R\cap\{x_1=0\}\right)\cup\left(\partial B_R\cap \{x_1\ge 0\}\right)$. The choice of $K$, combined with \eqref{barierabassaprontissima}, \eqref{barieraaltaprontissima} and the oddness of $u$ in $x_1$, yields
\[
 u_-=0=u=u_+\text{ on }\overline B_R\cap\{x_1=0\},\qquad u_-\leq C_n(1+R^2)^\gamma x_1=K(R)x_1=u\leq u_+\text{ on }\partial B_R\cap \{x_1\ge 0\}.
\]
Therefore,
\begin{equation*}
    \Ne u_-\geq\Ne u\geq \Ne u_+\quad\text{on }\Om,\qquad u_-\leq u\leq u_+\quad\text{on }\partial\Om.
\end{equation*}
By
\Cref{prop_comparison_principle},  we conclude that $u_-\leq u\leq u_+ \text{ on }\Om,$ whence \eqref{uniformestimatesproprincheigh} follows by \eqref{barierabassaprontissima} and  \eqref{barieraaltaprontissima}.
\end{proof}

\subsection{Uniform interior gradient estimates}
In this section, we provide interior gradient estimates for solutions to $\Qe u=0$, uniformly with respect to $\eps\in(0,1]$, on domains which are detached from the origin. Our approach follows closely that of \cite[Theorem 1.2]{MR5029815}. The main difference consists of the uniform control on the Riemannian curvature terms provided by \eqref{riemcurvaturetermspresoda3949596}.
\begin{proposition}\label{thm_uniform_gradient}
Let $0\le\varrho_1\le\varrho_2\le R$.   Let $u\in C^\infty(\overline B_R)$ satisfy $\Qe u=0$ on $B_R$.
Let $r,\gamma_1\ge 0$ and $\hat x\in B_R$ be such that
\begin{equation*}\label{gamma_1_bound}
 B_r(\hat x)\subseteq \{\varrho_1\le|x|\le\varrho_2\},\qquad |u|\leq\gamma_1\text{ on $\overline { B_r(\hat x)}$}.
\end{equation*}
Then there exists $C=C(n,\gamma_1,\varrho_1,\varrho_2,r)$, whence independent of $\eps\in(0,1]$ and $R\ge 0$, such that 
\begin{equation}\label{eq_ug_estimate}
 |Du(\hat x)|\leq C(n,\gamma_1,\varrho_1,\varrho_2,r).
\end{equation}
\end{proposition}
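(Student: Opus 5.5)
We follow the scheme of \cite[Theorem 1.2]{MR5029815}: a Korevaar-type maximum principle argument on the minimal (with respect to $\langle\cdot,\cdot\rangle_\eps$) $t$-graph $S=\{t=\langle x,y\rangle+u(x)\}$. Since $\Qe u=0$ means $H^\eps=0$ by \eqref{espressionecurvaturamediariempergarficispeciali}, we work with the vertical component of the normal,
\[
\omega\coloneqq\frac{\alpha}{\sqrt{1+\eps^2\alpha^2}}=\frac{1}{\We},
\]
which is positive and, by \eqref{jacobitipeequation384969}, satisfies $-\Delta^{\eps,S}\omega=(|h^\eps|^2+\ric^\eps(\n^\eps,\n^\eps))\,\omega$. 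A lower bound on $\omega(\hat x)$ is equivalent to an upper bound on $|Du(\hat x)|$. All the functions involved depend only on $x$, so we regard them as functions on $B_r(\hat x)$ and use \eqref{gradientesulgraficopercosenony38486} and \eqref{equazioneperlaplacianograficoridotto38486854} to compute $\nabla^{\eps,S}$ and $\Delta^{\eps,S}$.

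\textbf{Step 1: controlling the curvature term.} By \eqref{riemcurvaturetermspresoda3949596}, every term is non-negative except possibly $4J(\vh)\alpha/(1+\eps^2\alpha^2)$. On $\{\varrho_1\le|x|\le\varrho_2\}$ we have $\alpha=1/\W\le 1/(2\varrho_1)$, so $\alpha$ is bounded uniformly in $\eps$. We then absorb the bad term by Cauchy--Schwarz:
\[
4|J(\vh)\alpha|\le |\nabla^{\hhh,S}\alpha|\cdot 4 .
\]
Writing $\alpha=1/\W$ and expressing $J(\vh)\W$ through $D^2u$ and $x$, this should be bounded by $\tfrac12|\tilde h^\hhh|^2+C(n,\varrho_1)$. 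The outcome we aim for is
\[
-\Delta^{\eps,S}\omega\ge -C_0\,\omega,\qquad C_0=C_0(n,\varrho_1,\varrho_2),
\]
uniformly in $\eps\in(0,1]$. This is the step I expect to be the main obstacle. The worry is the factor $1/(1+\eps^2\alpha^2)$ and the term $J(\vh)\alpha$, which involves second derivatives of $u$. The inequality $\tilde h^\hhh(J\vh,J\vh)\sim J(\vh)$-derivative of the normal should make the absorption possible, possibly at the cost of a term $|\nabla^{\eps,S}\omega|^2/\omega$, which is harmless in Step 2.

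\textbf{Step 2: the maximum principle.} Following Korevaar, we consider the test function
\[
\eta(x)=\Big(1-\frac{|x-\hat x|^2}{r^2}+\frac{u(x)-\gamma_1}{4\gamma_1}\cdot\mu\Big)^+ ,
\]
or, more simply, the cutoff $\phi=\big((r^2-|x-\hat x|^2)^+\big)^2 e^{Ku}$ with $K=K(\gamma_1,r)$. We look at an interior maximum of $\phi/\omega=\phi\We$. At that point the first-order condition gives $\nabla^{\eps,S}\omega/\omega=\nabla^{\eps,S}\phi/\phi$, and the second-order condition gives $\Delta^{\eps,S}(\phi/\omega)\le0$. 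Combining this with Step 1 and with
\[
\Delta^{\eps,S}u=\frac{4\langle Du,x\rangle}{\We^2}\ \ (\text{bounded on the annulus}),\qquad |\nabla^{\eps,S}u|^2=|Du|^2\frac{4|x|^2+\eps^2}{\We^2},
\]
which follow from \eqref{gradientesulgraficopercosenony38486} and \eqref{equazioneperlaplacianograficoridotto38486854}, the term $K^2|\nabla^{\eps,S}u|^2$ dominates if $|Du|$ is large. This uses that $4|x|^2\ge4\varrho_1^2$, so the tangential gradient of $u$ does not degenerate. The derivatives of $|x-\hat x|^2$ are controlled by $\varrho_2$ and $r$ through \eqref{equazioneperlaplacianograficoridotto38486854}.

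\textbf{Step 3: conclusion.} The contradiction of Step 2 bounds $\phi\We$ at its maximum point by $C(n,\gamma_1,\varrho_1,\varrho_2,r)$. Evaluating at $\hat x$, where $\phi(\hat x)=r^4e^{Ku(\hat x)}\ge r^4e^{-K\gamma_1}$, yields \eqref{eq_ug_estimate}. No constant depends on $R$ or on $\eps$, since only $\eps\le1$ and $|x|\in[\varrho_1,\varrho_2]$ were used.
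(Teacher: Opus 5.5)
Your overall frame matches the paper: Korevaar's method on the $t$-graph, driven by the Jacobi equation \eqref{jacobitipeequation384969} for $\We^{-1}$. However, Step 2, with the cutoff you actually use, does not close.

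\textbf{Step 2 fails with the $x$-only cutoff.} Take $\phi=\zeta^2e^{Ku}$ with $\zeta=(r^2-|x-\hat x|^2)^+$. At a maximum of $\phi\We$, the first- and second-order conditions reduce to $\Delta^{\eps,S}\phi\leq C_0\phi$, and $\Delta^{\eps,S}\phi/\phi$ contains the term $2\Delta^{\eps,S}\zeta/\zeta$. By \eqref{equazioneperlaplacianograficoridotto38486854}, $\Delta^{\eps,S}\zeta=-2n+2|Du|^2\We^{-2}-8\langle x-\hat x,x\rangle\We^{-2}\to-2(n-1)$ as $|Du|\to\infty$. So this term behaves like $-4(n-1)/\zeta$ and is unbounded as the maximum point approaches $\partial B_r(\hat x)$. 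Three things prevent you from absorbing it:
\begin{itemize}
\item The term you want to dominate, $K^2|\nabla^{\eps,S}u|^2=K^2|Du|^2(4|x|^2+\eps^2)\We^{-2}$, does not grow with $|Du|$; it stays below $K^2(4\varrho_2^2+1)$.
\item The positive term $2|\nabla^{\eps,S}\zeta|^2/\zeta^2$ does not help. Indeed $|\nabla^{\eps,S}\zeta|^2=|D\zeta|^2-\langle D\zeta,Du\rangle^2\We^{-2}$ is $O(\We^{-2})$ when $D\zeta$ is parallel to $Du$.
\item Nothing bounds $\zeta$ from below at the maximum point.
\end{itemize}
This is the classical obstruction for mean-curvature-type operators: the tangential gradient of a cutoff depending only on $x$ degenerates in the direction of $Du$. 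That is why Korevaar's cutoff contains $u$ and is never divided by. The paper takes $\varphi$ a paraboloid with $(u-\varphi)(\hat x)=1$ and $u\leq\varphi$ on $\partial B_r(\hat x)$, sets $\eta(s)=(e^{Ks}-1)e^{-K(\gamma_1+\gamma_2)}$, and studies $\eta((u-\varphi)_+)\We$ by comparing $\eta(u-\varphi)$ with $M\We^{-1}$. When $M$ is large, $|\nabla^{\eps,S}(u-\varphi)|^2\geq\varrho_1^2/2$ at the maximum point, and $\eta''=K\eta'$ gives the contradiction. Your first option is of this kind, but it needs the exponential composition and this computation.

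\textbf{Step 1 is unproven, and your heuristic targets the wrong component.} You rightly call this step the crux. But $\tilde h^\hhh(J(\vh),J(\vh))=4(\langle x,Du\rangle-\langle D^2u\,x,x\rangle)\W^{-3}$, whereas $J(\vh)\alpha=-2(\langle D^2u\,Du,x\rangle+4|x|^2)\W^{-4}$.

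The paper needs no pointwise curvature bound. Since $M\We^{-1}\cdot 4J(\vh)\alpha/(1+\eps^2\alpha^2)=4M\alpha J(\vh)(\We^{-1})$, the first-order condition $J(\vh)\Psi=0$ turns the bad term into $4\alpha\eta'J(\vh)(u-\varphi)=8\eta'\langle D(u-\varphi),x\rangle\W^{-2}\geq-(2+2\gamma_2/\varrho_1)\eta'$. This involves only first derivatives and is absorbed by the choice of $K$.

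Your bound is nevertheless true in this setting. For $Du\neq0$, the unit vector $E'=\frac{2|x|}{|Du|\W}\sum_iu_iX_i-\frac{|Du|}{|x|\W}\sum_ix_iY_i$ lies in $\hhh TS$. A direct computation gives $J(\vh)\alpha=\frac{|Du|}{2|x|\W}\,\tilde h^\hhh(J(\vh),E')-2\W^{-2}$. Hence $4J(\vh)\alpha\geq-\frac12|\tilde h^\hhh|^2-4\varrho_1^{-2}$, and \eqref{riemcurvaturetermspresoda3949596} yields $-\Delta^{\eps,S}\We^{-1}\geq-4\varrho_1^{-2}\We^{-1}$. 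Combine this with Korevaar's cutoff: at the maximum, $\Delta^{\eps,S}\big(\eta(u-\varphi)\big)\leq 4\varrho_1^{-2}\eta\leq 4\varrho_1^{-2}\eta'/K$. This gives a valid alternative proof. It replaces the paper's one-line use of the first-order condition with an explicit second-fundamental-form computation, which you would have to supply.
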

\begin{proof}
   Let $S$ be the $t$-graph of $\langle x,y\rangle+u(x)$ in $\hh^n$ over $B_R\times\rr^n$. Since $u$ solves $\Qe u=0$, $H^\eps\equiv 0$ by \eqref{espressionecurvaturamediariempergarficispeciali}. 
   
   \smallskip
   \noindent \textbf{Step 1.}
   Define $\varphi\in C^\infty(\overline{B_r(\hat x)})$ by 
   \begin{equation*}
 \varphi(x)\coloneqq u(\hat x)-1+
 \bigl(\gamma_1-u(\hat x)+1\bigr)\frac{|x-\hat x|^2}{r^2}.
\end{equation*}
By construction, $(u-\varphi)(\hat x)=1$ and $u-\varphi\leq 0$ on $\partial B_r(\hat x)$. Let $\gamma_2=\gamma_2(\gamma_1,r)\ge 0$ be such that
\begin{equation*}
    |\varphi|\leq \gamma_2,\qquad |D\varphi|\leq \gamma_2,\qquad \langle D^2\varphi\,v,v\rangle\leq\gamma_2|v|^2\text{ for any }v\in\rr^n,\qquad \text{on }\overline{B_r(\hat x)}.
\end{equation*}
Fix $K\ge 0$ to be specified later, and set 
\begin{equation*}
 \eta(s)\coloneqq (e^{Ks}-1)e^{-K(\gamma_1+\gamma_2)},\qquad s\in\rr.
\end{equation*}
By definition of $\gamma_1$ and $\gamma_2,$ then 
\begin{equation}\label{cosedieta3848685}
    0\leq \eta\big((u-\varphi)_+\big)\leq 1\quad\text{on }\overline{B_r(\hat x)},\qquad  \eta\big((u-\varphi)_+\big)=0\quad\text{on }\partial B_r(\hat x).
\end{equation}
Define
\begin{equation*}
 \Phi\coloneqq
     \eta((u-\varphi)_+)W_\varepsilon\quad\text{on }\overline{B_r(\hat x)},
 \qquad M\coloneqq \max_{\overline{B_r(\hat x)}}\Phi,
\end{equation*}
where $W_{\eps}$ is defined in \eqref{eq:Weps}.
By \eqref{cosedieta3848685}, $\Phi=0$ on $\partial B_r(\hat x)$. Therefore, since $(u-\varphi)(\hat x)=1$, then $\Phi(\hat x)\ge 0$, the maximum $M$ is achieved at some $\bar x\in B_r(\hat x)$. In particular, $(u-\varphi)(\bar x)\ge 0$ at $\bar x$, whence $\Phi=
     \eta(u-\varphi)W_\varepsilon\text{ locally near }\bar x$.
Recalling the identification of \Cref{sec_tgraph}, define $\Psi\in C^\infty(S)$ by
\begin{equation*}
    \Psi(x,y)=\Psi(x)\coloneqq  \eta(u-\varphi)-M\We^{-1}.
\end{equation*}
By construction, for any $y\in\rr^n$, $(\bar x,y)$ is a local maximum point of $\Psi$, whence
\begin{equation}\label{condizionimassimoinuge3485886}
    \nabla^{\eps,S}\Psi(\bar x)=0,\qquad \Delta^{\eps,S}\Psi(\bar x)\leq 0.
\end{equation}
Therefore, since
\begin{equation}\label{esprimereweconalpha38486848}
    \We^{-1}\overset{\eqref{normaleriemannianaepsilon},\eqref{normalipergraficiridotti38586}}{=}\frac{\alpha}{\sqrt{1+\eps^2\alpha^2}},
\end{equation}
then, at $\bar x$,
\begin{equation}\label{laplacianouno384959649}
    \begin{split}
        0&\geq \Delta^{\eps,S}\Psi\\
        &=\eta''(u-\varphi)|\nabla^{\eps,S}(u-\varphi)|^2+\eta'(u-\varphi)\Delta^{\eps,S}(u-\varphi)-M\Delta^{\eps,S}\big(\We^{-1})\\
        \overset{\eqref{jacobitipeequation384969}}&{=}\eta''(u-\varphi)|\nabla^{\eps,S}(u-\varphi)|^2+\eta'(u-\varphi)\Delta^{\eps,S}(u-\varphi)+M\We^{-1}\Big(\left|h^\eps\right|^2+\ric^\eps(\n^\eps,\n^\eps)\Big).
    \end{split}
\end{equation}
\textbf{Step 2.}
We claim (without intention to be sharp) that 
\begin{equation}\label{boundpermuniformeo48576884}
    M\leq M_0\coloneqq \sqrt{2+8\varrho_2^2+4\gamma_2^2}.
\end{equation}
Suppose not. We show that appropriate lower bounds in \eqref{laplacianouno384959649} and a suitable choice of $K$ lead to a contradiction. First, we bound the gradient term. Since $\eta\big((u-\varphi)(\bar x)\big)\leq 1$ by \eqref{cosedieta3848685} and $\Psi(\bar x)=0$ by construction, the choice of $\bar x$ yields $ M_0\le M\le \We$. Therefore
\begin{equation}\label{aux1uge3848697}
4|D(u-\varphi)|^2\geq 2|Du|^2-4|D\varphi|^2\geq
   |Du|^2+2+8\varrho_2^2+4\gamma_2^2-4|x|^2-\eps^2-4|D\varphi|^2\geq |Du|^2\qquad\text{at }\bar x
\end{equation}
and
\begin{equation}\label{auxdueuge83496974}
      |Du|^2\geq 2+8\varrho_2^2+4\gamma_2^2-4|x|^2-\eps^2\geq 4\varrho_2^2+1\geq 4|x|^2+\eps^2\qquad\text{at }\bar x,
\end{equation}
whence, at $\bar x$,
\begin{equation}\label{sitmagradumenophiuge34869473}
    \begin{split}
        \left|\nabla^{\eps,S}(u-\varphi)\right|^2\overset{\eqref{gradientesulgraficopercosenony38486}}&{=}\frac{(|Du|^2+4|x|^2+\eps^2)|D(u-\varphi)|^2-\left\langle D(u-\varphi),Du\right\rangle^2}{\We^2}\\
        &\geq \left(\frac{4|x|^2+\eps^2}{\We^2}\right)|D(u-\varphi)|^2\\
        \overset{\eqref{aux1uge3848697}}&{\geq}\left(\frac{|Du|^2}{\We^2}\right)\left(\frac{4|x|^2+\eps^2}{4}\right)\\
        \overset{\eqref{auxdueuge83496974}}&{\geq}\frac{4|x|^2+\eps^2}{8}\\
        &\geq \frac{\varrho_1^2}{2}.
    \end{split}
\end{equation}
\noindent \textbf{Step 3.} Next, we bound the second-order term. To this aim, observe that
\begin{equation*}
    g^{ij}_\eps(x,y)\overset{\eqref{formaspecialecoso2939469}}{=}\delta_{ij}-\frac{u_i(x)u_j(x)}{\We(x)^2}=\We(x)^{-2}\Ae_{ij}(x,Du(x)),\qquad i,j=1,\ldots, n.
\end{equation*}
Therefore, since $\Qe u=0$, 
\begin{equation*}
        \Delta^{\eps,S}u\overset{\eqref{equazioneperlaplacianograficoridotto38486854}}{=}\frac{1}{\We^2}\sum_{i,j=1}^{n}\Ae_{ij} D^2u_{ij}+\frac{4\left\langle Du,x\right\rangle}{\We^2}=\frac{8\left\langle Du,x\right\rangle}{\We^2}\geq -2.
\end{equation*}
Moreover, by definition of $\gamma_2$,
\begin{equation*}
    \Delta^{\eps,S}\varphi \overset{\eqref{equazioneperlaplacianograficoridotto38486854}}{=}\Delta\varphi-\frac{\left\langle D^2 \varphi\,Du,Du\right\rangle}{\We^2}+\frac{4\left\langle D\varphi,x\right\rangle}{\We^2}\leq (n+1)\gamma_2+\frac{\gamma_2}{|x|}\leq (n+1)\gamma_2+\frac{\gamma_2}{\varrho_1}.
\end{equation*}
Therefore,
\begin{equation}\label{stimalapluge3784869847374}
    \Delta^{\eps,S}(u-\varphi)\geq -2-\left(n+1+\frac{1}{\varrho_1}\right)\gamma_2.
    \end{equation}
    \noindent \textbf{Step 4.} Finally, we bound the curvature term. Notice that $S$ is non-characteristic over $\{\varrho_1\le |x|\le \varrho_2\}\times\rr^n$. The following quantities are tacitly evaluated at $\bar x$. Since
    \begin{equation}\label{chainrulealphawe3885764}
    \alpha \We J(\vh)\left(\We^{-1}\right)\overset{\eqref{esprimereweconalpha38486848}}{=}\frac{J(\vh)\alpha}{1+\eps^2\alpha^2},
    \end{equation}
    we discard the non-negative terms in \eqref{riemcurvaturetermspresoda3949596}, obtaining 
    \begin{equation*}
        M\We^{-1}\Big(\left|h^\eps\right|^2+\ric^\eps(\n^\eps,\n^\eps)\Big)\geq 4M\We^{-1}\left(\frac{J(\vh)\alpha}{1+\eps^2\alpha^2}\right)\overset{\eqref{chainrulealphawe3885764}}{=}4M\alpha J(\vh)\left(\We^{-1}\right).
    \end{equation*}
    Noticing that 
    \begin{equation*}
        0\overset{\eqref{condizionimassimoinuge3485886}}{=}J(\vh)\Psi=\eta'(u-\varphi)J(\vh)(u-\varphi)-MJ(\vh)\left(\We^{-1}\right),
    \end{equation*}
    then 
    \begin{equation*}
        M\We^{-1}\Big(\left|h^\eps\right|^2+\ric^\eps(\n^\eps,\n^\eps)\Big)\geq \eta'(u-\varphi)\big(4\alpha J(\vh)(u-\varphi)\big).
    \end{equation*}
    By \eqref{normalipergraficiridotti38586} and \eqref{espressionecurvaturamediariempergarficispeciali}, and arguing as in Step 3,
    \begin{equation*}
        4\alpha J(\vh)(u-\varphi)=\frac{8\left\langle Du,x\right\rangle}{\W^2}-\frac{8\left\langle D\varphi,x\right\rangle}{\W^2}\geq -2-\frac{2\gamma_2}{\varrho_1},
    \end{equation*}
    In conclusion, and noticing that $\eta'\geq 0$, 
    \begin{equation}\label{outputofstep4uge3845764}
        M\We^{-1}\Big(\left|h^\eps\right|^2+\ric^\eps(\n^\eps,\n^\eps)\Big)\geq \eta'(u-\varphi)\left(-2-\frac{2\gamma_2}{\varrho_1}\right).
    \end{equation}
    \noindent \textbf{Step 5.} 
    Choose
    \begin{equation*}
        K=\frac{2}{\varrho_1^2}\left(5+\left(n+1+\frac{3}{\varrho_1}\right)\gamma_2\right).
    \end{equation*}
    Since $\eta',\eta''\geq 0$ and $\eta''=K\eta'$, we combine \eqref{laplacianouno384959649} with \eqref{sitmagradumenophiuge34869473}, \eqref{stimalapluge3784869847374} and \eqref{outputofstep4uge3845764}. Therefore, at $\bar x$, 
    \begin{equation*}
    \begin{split}
          0&\geq \eta''(u-\varphi)|\nabla^{\eps,S}(u-\varphi)|^2+\eta'(u-\varphi)\Delta^{\eps,S}(u-\varphi)+M\We^{-1}\Big(\left|h^\eps\right|^2+\ric^\eps(\n^\eps,\n^\eps)\Big)\\
          &\geq  \eta''(u-\varphi)\left(\frac{\varrho_1^2}{2}\right)+\eta'(u-\varphi)\left(-4-\left(n+1+\frac{3}{\varrho_1}\right)\gamma_2\right)\\
          &=\eta'(u-\varphi)\left(\frac{K\varrho_1^2}{2}-4-\left(n+1+\frac{3}{\varrho_1}\right)\gamma_2\right)\\
          &=\eta'(u-\varphi)\\
          &\ge 0,
    \end{split}
    \end{equation*}
    a contradiction. Hence \eqref{boundpermuniformeo48576884} holds. By definition of $M$, recalling that $(u-\varphi)(\hat x)=1$, we conclude that
    \begin{equation*}
        |Du(\hat x)|\leq \We(\hat x)=\frac{\Phi(\hat x)}{\eta(1)}\leq \frac{M}{\eta(1)}\overset{\eqref{boundpermuniformeo48576884}}{\leq}\frac{M_0}{\eta(1)}.
    \end{equation*}
    The thesis follows. 
\end{proof}
\subsection{Uniform positive lower bounds on the first partial derivative}\label{fvrtg5ytbgtygvtr}
Throughout this section, we fix $\eps\in(0,1]$, $R\ge 0$, $K=K(R)$ as in \eqref{definizionekappaperheightestimates3948568}, and the solution $u\in C^\infty(\overline B_R)$ given by \eqref{eq_diri_ball_eps}. We provide the following positive lower bound for $u_1$. 
\begin{proposition}\label{prooffromboundtoboundderprop3848}
    It holds that 
    \begin{equation}\label{ugequnoonbrproop}
        u_1\geq 1 \qquad\text{on } B_R.
    \end{equation}
\end{proposition}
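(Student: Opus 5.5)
The plan is to run a minimum principle on $v\coloneqq u_1$, using the equation obtained by differentiating $\Ne u=0$ in $x_1$, and to control the boundary values separately. Since the only explicit $x$-dependence of the coefficients is through $4|x|^2$ and $b$, \eqref{derivatematriceae384869} gives, on $B_R$,
\begin{equation*}
\sum_{j,k=1}^n\Ae_{jk}v_{jk}+2\langle Du,Dv\rangle\Delta u-2\sum_{j,k=1}^nu_{jk}u_jv_k-4\langle Dv,x\rangle+8x_1\Delta u-4v=0 .
\end{equation*}
This is a linear elliptic equation for $v$ with zero-order coefficient $-4\le 0$ and source term $8x_1\Delta u$. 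At an interior minimum point of $v$ all first-order terms vanish and $\sum\Ae_{jk}v_{jk}\ge0$, so there $4v\geq 8x_1\Delta u$. By oddness of $u$ in $x_1$, $v$ is even in $x_1$, so it suffices to work on $\overline B_R\cap\{x_1\ge0\}$. On $\{x_1=0\}$ the source term vanishes, and there $\Delta u$ is also controlled because $u=0$ on that hyperplane.

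\textbf{Boundary values.} On $\partial B_R\cap\{x_1\ge 0\}$ I will use the comparison $u_-\le u\le u_+$ from the proof of \Cref{propunistimamain3948573883derivata}. All three functions vanish on $\{x_1=0\}$, which gives $u_1\ge (u_-)_1\ge 1$ there: indeed $(u_-)_1=h\ge1$ on $\{x_1=0\}$ by \Cref{prop:barrier-profile}. On the spherical part, $u=K(R)x_1$, so the tangential component of $Du$ along the direction $e_1-\frac{x_1}{R^2}x$ is explicit. The normal derivative is bounded from below by the barrier $b_-$ of \Cref{propositiondiriboundcondex}, Step~3, or better by $u_-$ translated so as to touch the boundary. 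I expect this, together with $K(R)\ge C_n\ge 1$, to yield $u_1\ge1$ on $\partial B_R$, possibly after checking that $\partial_\nu(u-u_-)\le 0$ combines with the positive tangential part $K(1-x_1^2/R^2)$.

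\textbf{Interior (the main obstacle).} The difficulty is the sign of the source $8x_1\Delta u$ at a minimum point of $v$ in $\{x_1>0\}$. My first attempt is to rewrite $\Delta u$ through the equation itself: $\We^2\Delta u=\langle D^2u\,Du,Du\rangle+4\langle Du,x\rangle$, and at a critical point of $v$ the first component of $D^2u\,Du$ is $\langle Dv,Du\rangle=0$. If this is not enough, I will replace $v$ by an auxiliary function $v-\theta\,(u/x_1-1)$, or by $v$ multiplied by a suitable weight. The idea is to absorb $8x_1\Delta u$ through the zero-order term. I also plan to exploit the invariance of $u$ under orthogonal transformations of $x_2,\ldots,x_n$, which reduces the problem to two variables $(x_1,|x'|)$.

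\textbf{Fallback.} If the pointwise argument fails, I will use a monotonicity/continuity argument in $R$ (or $\eps$). Assuming $\min v<1$, I will pick the first parameter at which $\min v$ reaches $1$, and then derive a contradiction from the strong maximum principle applied to the linearized equation above, the constraint coming from the height bounds \eqref{uniformestimatesproprincheigh}, and the boundary step.
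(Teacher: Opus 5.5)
Your proposal has a genuine gap: the interior step cannot produce the level $1$, and the obstruction is not the sign of the source term. Write the differentiated equation as $(\mathcal L^\varepsilon-4)u_1=-8x_1\Delta u$. Since $(\mathcal L^\varepsilon-4)1=-4<0$, the constant $1$ is a strict supersolution, not a subsolution. Comparing $u_1$ with $1$ from below would therefore require $2x_1\Delta u\geq 1$, and nothing in the problem provides this; it is false wherever $x_1$ is small, because $\Delta u$ is bounded on $\overline B_R$. At an interior minimum you only get $u_1\geq 2x_1\Delta u$. So even a complete proof that $x_1\Delta u\geq 0$ in the half-ball would give $u_1\geq 0$, not $u_1\geq 1$.

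Your fallback fails for the same reason. At a first interior touching point with $\min u_1=1$, ellipticity only yields $2x_1\Delta u\leq 1$. The strong maximum principle does not apply to $u_1-1$, because $(\mathcal L^\varepsilon-4)(u_1-1)=4-8x_1\Delta u$ has no sign.

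The missing idea is that the constant $1$ comes from the height bound $u\geq x_1$ of \Cref{propunistimamain3948573883derivata}. It is transferred to the derivative through $x_1u_1-u\geq 0$ on $B_R\cap\{x_1\geq 0\}$ (\Cref{prop:mono-dilation}), so that $x_1u_1\geq u\geq x_1$. Your auxiliary function with $\theta=1$ points that way, since $u_1-(u/x_1-1)=1+(x_1u_1-u)/x_1$. However, proving $x_1u_1-u\geq 0$ is the real work, and your plan contains none of it:
\begin{itemize}
\item At an interior minimum of $v=x_1u_1-u$ one finds $\mathcal L^\varepsilon v=-8x_1^2\Delta u-2(n-2)u_r^3/r$.
\item After expressing $\Delta u$ through the equation, this is negative only because $u_r\geq 0$, $u_1>0$ and $\langle x,Du\rangle\geq 0$ in the half-ball.
\item The paper obtains these signs by a chain of maximum principle arguments for $\mathcal L^\varepsilon$, using the $O(n-1)$-invariance. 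First, $\mathcal L^\varepsilon\omega_i=0$ for $\omega_i=x_iu_1-x_1u_i$ gives $ru_1-x_1u_r\geq 0$. This yields $\langle x,Du\rangle\geq 0$, which in turn gives $u_r\geq 0$ via \eqref{prole7}.
\end{itemize}

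Your boundary step also has the inequality reversed. A lower barrier ($b_-$, or $u_-$) that coincides with $u$ on $\partial B_R$ bounds the outward normal derivative of $u$ from above. On $\partial B_R\cap\{x_1>0\}$ you need a lower bound, since $u_1=K(1-x_1^2/R^2)+(x_1/R)\,\partial_\nu u$ there. The correct comparison is with the supersolution $Kx_1$: indeed $\mathcal N^\varepsilon(Kx_1)=-4Kx_1\leq 0$ on the half-ball, so $u\leq Kx_1$ there. Hence $Du=Ke_1+d\,x/R$ with $d\geq 0$, which gives $u_1\geq K\geq 1$ and $x_1u_1-u=d\,x_1^2/R\geq 0$ on the spherical boundary. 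The upper barrier $b_+$ of Step 3 would only give $u_1\geq K-2x_1$, which is not enough for large $R$.
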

We prove  \Cref{prooffromboundtoboundderprop3848} in several steps, combining \Cref{propunistimamain3948573883derivata} with several applications of elementary maximum-principle-related techniques for the linearized operator associated with $\Ne$. Accordingly, set
\begin{equation}\label{linearizeddef3845}
    \Le\varphi\coloneqq \sum_{i,j=1}^n\Ae(x,Du)_{ij}D^2\varphi_{ij}+\left\langle 2\Delta u\,Du-2D^2 u\,Du-4x,D\varphi\right\rangle.
\end{equation}
Throughout this section, we write  $\Ae=\Ae(x,Du)$. By \Cref{sec_reg_eq}, $\Le$ is a linear, second-order, uniformly elliptic operator over $ B_R$. In the following, we denote by $e_1,\ldots, e_n$ the canonical basis of $\rr^n$.  At a fixed point $x\in \overline B_R$, we adopt the notations
\begin{equation*}
    r=r(x)\coloneqq \sqrt{x_2^2+\cdots+x_n^2},\qquad e_r=e_r(x)\coloneqq\frac{1}{r}\sum_{i=2}^nx_i e_i,\qquad u_r\coloneqq \langle Du,e_r\rangle =\frac{1}{r}\sum_{i=2}^nx_i u_i.
\end{equation*}
Notice that $e_r$ and $u_r$ are defined only when $r\neq 0$. Nevertheless, by invariance with respect to $x_2,\ldots,x_n$, $u_r$ extends continuously to $0$ on $r=0$. Accordingly, with abuse of notation we will write $u_r=0$ on when $r=0$. When instead $r\ge 0$, since $u$ is invariant under orthogonal transformations of $x_2,\ldots,x_n$,
\begin{equation}\label{gradienteinroti394969}
    Du(x)=u_1(x) e_1+u_r(x)e_r(x).
\end{equation}
In particular, for any fixed $\tilde x$ such that $r(\tilde x)\ge 0$, there are constant vector fields $\tau_3,\ldots,\tau_n$ such that $e_1,e_r(\tilde x),\tau_3,\ldots,\tau_n$ is a orthonormal basis at $\tilde x$ and 
\begin{equation}\label{campiortogonali384956995}
    \langle Du(\tilde x),\tau_i\rangle=0\qquad i=3,\ldots,n. 
\end{equation}
Moreover, if $i=3,\ldots,n$ is fixed,  \eqref{gradienteinroti394969} and the fact that $\langle e_1,\tau_i\rangle =0$ yield
\begin{equation*}
    \langle Du(x),\tau_i\rangle =u_r(x)\langle e_r(x),\tau_i\rangle=u_r(x)\frac{\langle x,\tau_i\rangle}{r(x)}\qquad \text{locally near $\tilde x$}.
\end{equation*}
Differentiating the previous identity, recalling that $\tau_i$ is constant, gives
\begin{equation}\label{eigenvectorsipreliminaries394596}
    D^2u(x)\,\tau_i =D\left(\frac{u_r(x)}{r(x)}\right) \langle x,\tau_i\rangle+\frac{u_r(x)}{r(x)}\tau_i\quad \text{locally near $\tilde x$},\qquad  D^2 u(\tilde x)\,\tau _i=\frac{u_r(\tilde x)}{r(\tilde x)}\tau_i.
\end{equation}
In particular, $\tau_3,\ldots,\tau_n$ are eigenvectors of $D^2 u(\tilde x)$, with common eigenvalue $r(\tilde x)^{-1}u_r(\tilde x)$. We first collect some basic features of $\Le$. 
\begin{proposition}
    The following facts hold on $B_R$.
    \begin{align}
    \Le u&=8\langle x, Du\rangle -2(4|x|^2+\eps^2)\,\Delta u,\label{prole1}\\
    \Le x_i&=2 u_i\Delta u-2(D^2 u\,Du)_i-4x_i,\qquad i=1,\ldots,n,\label{prole2}\\
    \Le u_i&=4u_i-8x_i\,\Delta u,\label{prole3}\\
    \Le(\varphi\psi)&=\varphi\Le \psi+\psi\Le\varphi+2\sum_{i,j=1}^n\Ae_{ij}\varphi_i\psi_j,\label{prole4}\\
   \Le\langle x,Du\rangle&=16\langle x,Du\rangle-(16|x|^2+2\varepsilon^2)\Delta u.\label{prole5},\\
\Le\left(\frac{x_i}{r}\right)
&=-(n-2)W_\varepsilon^2\frac{x_i}{r^3}
\qquad r>0,\,i=2,\ldots,n,\label{prole6}\\
\Le u_r
&=
4u_r-8r\Delta u
+(n-2)W_\varepsilon^2\frac{u_r}{r^2},\qquad r>0.\label{prole7}
    \end{align}
\end{proposition}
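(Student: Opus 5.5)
The plan is to recognise $\Le$ as the linearization of $\Ne$ at $u$, and then derive all seven identities from $\Ne u=0$ together with elementary calculus.

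\textbf{Linearization.} From \eqref{derivatematriceae384869},
\[
\sum_{j,k}\partial_{p_l}\Ae_{jk}(x,Du)\,u_{jk}=2u_l\Delta u-2(D^2u\,Du)_l .
\]
Hence
\[
\frac{d}{dt}\Big|_{t=0}\Ne(u+t\varphi)=\sum_{j,k}\Ae_{jk}\varphi_{jk}+\left\langle 2\Delta u\,Du-2D^2u\,Du-4x,D\varphi\right\rangle=\Le\varphi .
\]

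\textbf{The key consequence of $\Ne u=0$.} The equation $\Ne u=0$ reads $\We^2\Delta u-\langle D^2u\,Du,Du\rangle=4\langle x,Du\rangle$. Equivalently,
\[
\langle D^2u\,Du,Du\rangle=\We^2\Delta u-4\langle x,Du\rangle .
\]
I will use this repeatedly.

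\textbf{Identities \eqref{prole1}--\eqref{prole4}.}
\begin{itemize}
\item For \eqref{prole1}, substitute $\varphi=u$ into \eqref{linearizeddef3845}. Replace $\sum\Ae_{jk}u_{jk}$ by $4\langle x,Du\rangle$ and $\langle D^2u\,Du,Du\rangle$ by the identity above. The $|Du|^2\Delta u$ terms cancel against $\We^2\Delta u$, leaving $8\langle x,Du\rangle-2(4|x|^2+\eps^2)\Delta u$.
\item \eqref{prole2} is immediate, since $D^2x_i=0$.
\item \eqref{prole4} is the Leibniz rule for a second-order operator with symmetric coefficient matrix $\Ae$.
\item For \eqref{prole3}, differentiate $\Ne u=0$ with respect to $x_i$. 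The derivative splits into three pieces:
\begin{itemize}
\item the $p$-dependence of $\Ae$ together with $\sum\Ae_{jk}u_{ijk}$, which reproduces $\Le u_i$ except for the term $-4\langle x,Du_i\rangle$;
\item the explicit $x$-dependence of $\Ae$, which contributes $8x_i\Delta u$;
\item the derivative of $-4\langle Du,x\rangle$, which equals $-4u_i-4(D^2u\,x)_i$; its second term is exactly the missing $-4\langle x,Du_i\rangle$ of $\Le u_i$.
\end{itemize}
Rearranging gives $\Le u_i=4u_i-8x_i\Delta u$.
\end{itemize}

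\textbf{Identity \eqref{prole5}.} Apply \eqref{prole4} to $\sum_i x_iu_i$, using \eqref{prole2}, \eqref{prole3}, and $2\sum\Ae_{ij}u_{ij}=8\langle x,Du\rangle$. One obtains
\[
4\langle x,Du\rangle-8|x|^2\Delta u+2|Du|^2\Delta u-2\langle D^2u\,Du,Du\rangle-4\langle x,Du\rangle+8\langle x,Du\rangle .
\]
Eliminating $\langle D^2u\,Du,Du\rangle$ once more by the key identity yields \eqref{prole5}.

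\textbf{Identity \eqref{prole6}.} Set $f=x_i/r$. It is $0$-homogeneous in $x'=(x_2,\ldots,x_n)$, independent of $x_1$, and $Df=r^{-1}\big(e_i-\tfrac{x_i}{r}e_r\big)$ is orthogonal to $e_1$ and $e_r$. I will show each part of $\Le f$ reduces to $\We^2\Delta f$.
\begin{itemize}
\item By \eqref{gradienteinroti394969}, $\langle Du,Df\rangle=0$.
\item By homogeneity, $\langle x,Df\rangle=0$.
\item The vector $Df$ lies in $\mathrm{span}\{\tau_3,\ldots,\tau_n\}$, the eigenspace of $D^2u$ from \eqref{eigenvectorsipreliminaries394596}. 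Hence $\langle D^2u\,Du,Df\rangle=\tfrac{u_r}{r}\langle Du,Df\rangle=0$.
\item Finally, $\langle D^2f\,Du,Du\rangle=u_r^2\,\partial_{rr}f=0$, because $f$ is constant along radial rays and independent of $x_1$.
\end{itemize}
Therefore $\Le f=\We^2\Delta f$. The Euclidean Laplacian of $x_i/|x'|$ in $\rr^{n-1}$ equals $-(n-2)x_i/r^3$, which gives \eqref{prole6}.

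\textbf{Identity \eqref{prole7}.} Write $u_r=\sum_{i\ge2}(x_i/r)\,u_i$ and apply \eqref{prole4}. There are three contributions.
\begin{itemize}
\item From \eqref{prole3}: $4u_r-8r\Delta u$.
\item From \eqref{prole6}: $-(n-2)\We^2u_r/r^2$.
\item The cross term $2\sum\Ae\big(D(x_i/r),Du_i\big)$. The $Du\otimes Du$ part of $\Ae$ drops out, because $\langle Du,D(x_i/r)\rangle=0$. What remains is $2\We^2$ times the trace of $D^2u$ against $r^{-1}$ times the projection onto $\mathrm{span}\{\tau_3,\ldots,\tau_n\}$. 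By \eqref{eigenvectorsipreliminaries394596} this is $2(n-2)\We^2u_r/r^2$.
\end{itemize}
Summing the three contributions gives \eqref{prole7}.

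\textbf{Main obstacle.} I expect \eqref{prole6}--\eqref{prole7} to be the delicate step. They rely on the rotational symmetry: the orthogonality $Du\perp D(x_i/r)$ and the eigenvector structure of $D^2u$ on the $\tau$-directions must be invoked at each point with $r>0$.
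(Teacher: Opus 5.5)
Your proposal is correct and follows the paper's proof essentially step by step. You obtain \eqref{prole1}--\eqref{prole5} by differentiating $\Ne u=0$, applying the Leibniz rule, and eliminating $\langle D^2u\,Du,Du\rangle$ through the equation, and \eqref{prole6}--\eqref{prole7} from the rotational symmetry $Du=u_1e_1+u_re_r$; the only cosmetic difference is that you invoke the eigenvector structure \eqref{eigenvectorsipreliminaries394596} of $D^2u$ on $\mathrm{span}\{\tau_3,\ldots,\tau_n\}$, whereas the paper expands $u_{jk}=\partial_k(x_j/r)u_r+(x_j/r)\partial_k u_r$ in coordinates, which is the same computation.
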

\begin{proof}
    The proof of \eqref{prole1} is a simple consequence of $\Ne u=0$. The proof of \eqref{prole2} follows directly by \eqref{linearizeddef3845}. We prove \eqref{prole3}. To this aim, recalling that $\Ne u=0,$
    \begin{equation*}
        \begin{split}
            0&=\partial_i\left(\Ne u\right)\\
            &=\sum_{j,k=1}^n\partial_{x_i}\Ae_{jk}u_{jk}+\sum_{j,k,l=1}^n\partial_{p_l}\Ae_{jk}u_{il}u_{jk}+\sum_{j,k=1}^n\Ae_{jk}(u_i)_{jk}-4 u_i-4\sum_{j=1}^n x_j u_{ij}\\
            \overset{\eqref{derivatematriceae384869}}&{=}8x_i\Delta u+2\langle Du,Du_i\rangle \Delta u-2\langle D^2 u\,Du, Du_i\rangle +\sum_{j,k=1}^n\Ae_{jk}(u_i)_{jk}-4 u_i-4\langle x, Du_i\rangle\\
            &=\Le u_i-4 u_i+8x_i\Delta u.
        \end{split}
    \end{equation*}
    This proves \eqref{prole3}. In addition, \eqref{prole4} follows directly by linearity of $\Le$ and the product rule for derivatives. Next, we prove \eqref{prole5}. Accordingly,
    \begin{equation*}
        \begin{split}
            \Le\langle x,Du\rangle &=\sum_{i=1}^n\Le (x_i u_i)\\
            \overset{\eqref{prole4}}&{=}\sum_{i=1}^nx_i\Le ( u_i)+\sum_{i=1}^nu_i\Le ( x_i)+2\sum_{i,j,k=1}^n \Ae_{jk}\delta_{ij}u_{ik}\\
            \overset{\eqref{prole2},\eqref{prole3}}&{=}4\langle x,Du\rangle -8|x|^2\Delta u+2|Du|^2\Delta u -2\langle D^2 u\,Du,Du\rangle -4\langle x,Du\rangle+2\sum_{i,k=1}^n \Ae_{ik}u_{ik}\\
            \overset{\eqref{eq_diri_ball_eps}}&{=}16\langle x,Du\rangle-(16|x|^2+2\varepsilon^2)\Delta u.
        \end{split}
    \end{equation*}
    We prove \eqref{prole6}.
    Fix $i=2,\ldots,n$. For $j,k=2,\ldots,n$, a direct computation gives
\begin{equation}\label{deriprimedixisur385896}
    \partial_j\left(\frac{x_i}{r}\right)
=\frac{\delta_{ij}}{r}-\frac{x_i x_j}{r^3}
\end{equation}
and
\[
\partial_{jk}\left(\frac{x_i}{r}\right)
=
-\frac{
\delta_{ij}x_k+\delta_{ik}x_j+\delta_{jk}x_i
}{r^3}
+\frac{3x_i x_j x_k}{r^5}.
\]
All first and second derivatives involving differentiation
with respect to $x_1$ vanish. Consequently,
\[
\sum_{j=1}^n
x_j\partial_j\left(\frac{x_i}{r}\right)
=
\frac{x_i}{r}-\frac{x_i r^2}{r^3}
=0,
\]
\[
\Delta\left(\frac{x_i}{r}\right)
=
-\frac{(n+1)x_i}{r^3}
+\frac{3x_i r^2}{r^5}
=
-\frac{(n-2)x_i}{r^3},
\]
and
\[
\sum_{j,k=2}^n
x_j x_k\partial_{jk}\left(\frac{x_i}{r}\right)
=
-\frac{3x_i r^2}{r^3}
+\frac{3x_i r^4}{r^5}
=0.
\]
Hence, recalling that \eqref{gradienteinroti394969} implies
\begin{equation}\label{derisoluorvijtrvntrijn48495966}
    u_j=\frac{x_j\,u_r}{r},
\qquad u_{jk}=\partial_k\left(\frac{x_j}{r}\right)u_r+\frac{x_j}{r}\partial _k u_r,\qquad  j,k=2,\ldots,n ,
\end{equation}
then
\begin{equation*}
\sum_{j,k=1}^n
\Ae_{jk}
\partial_{jk}\left(\frac{x_i}{r}\right)
=
\We^2\Delta\left(\frac{x_i}{r}\right)
-\frac{u_r^2}{r^2}
\sum_{j,k=2}^n
x_j x_k
\partial_{jk}\left(\frac{x_i}{r}\right)=-(n-2)W_\varepsilon^2\frac{x_i}{r^3}.
\end{equation*}
Moreover, recalling again \eqref{gradienteinroti394969},
\begin{equation}\label{384969548375685}
2\sum_{k=1}^n u_k u_{jk}
=\partial_j(|Du|^2)
=\frac{x_j}{r}\,\partial_r(|Du|^2).
\end{equation}
Therefore, combining \eqref{derisoluorvijtrvntrijn48495966} and \eqref{384969548375685},
\[
2(\Delta u)u_j
-2\sum_{k=1}^n u_k u_{jk}
-4x_j
=
\frac{x_j}{r}
\left(
2(\Delta u)u_r-\partial_r(|Du|^2)-4r
\right).
\]
In conclusion,
\[
\left\langle 2\Delta u\,Du-2D^2 u\,Du-4x,D\left(\frac{x_i}{r}\right)\right\rangle=
\left(\frac{2(\Delta u)u_r-\partial_r(|Du|^2)-4r}{r}
\right)\sum_{j=2}^n
x_j\partial_j\left(\frac{x_i}{r}\right)
=0.
\]
Finally, we prove \eqref{prole7}. First,
\begin{equation*}
\Le u_r\overset{\eqref{prole4}}{=}
\sum_{i=2}^n\frac{x_i}{r}\Le u_i
+\sum_{i=2}^n u_i\Le\left(\frac{x_i}{r}\right)
+
2\sum_{i=2}^n\sum_{j,k=1}^n
\Ae_{jk}
\partial_j\left(\frac{x_i}{r}\right)u_{ik}.
\end{equation*}
By \eqref{prole3} and \eqref{prole6},
\begin{align*}
\sum_{i=2}^n\frac{x_i}{r}\Le u_i
&=
\sum_{i=2}^n\frac{x_i}{r}
\left(4u_i-8x_i\Delta u\right)
=4u_r-8r\Delta u,\\
\sum_{i=2}^n u_i\Le\left(\frac{x_i}{r}\right)
&=
-\frac{(n-2)W_\varepsilon^2}{r^3}
\sum_{i=2}^n x_i u_i
=-(n-2)W_\varepsilon^2\frac{u_r}{r^2}.
\end{align*}
Moreover, for $i=2,\ldots,n$, \eqref{deriprimedixisur385896} and \eqref{derisoluorvijtrvntrijn48495966} yield
\begin{equation*}
\sum_{j=1}^n u_j
\partial_j\left(\frac{x_i}{r}\right)
=
u_r\sum_{j=2}^n\frac{x_j}{r}
\left(\frac{\delta_{ij}}r-\frac{x_i x_j}{r^3}\right)=
u_r\left(
\frac{x_i}{r^2}
-\frac{x_i}{r^4}\sum_{j=2}^n x_j^2
\right)
=0.
\end{equation*}
Therefore,
\begin{equation}\label{partialuno3845969679796}
2\sum_{i=2}^n\sum_{j,k=1}^n
\Ae_{jk}
\partial_j\left(\frac{x_i}{r}\right)u_{ik}=
2W_\varepsilon^2
\sum_{i,j=2}^n
\partial_j\left(\frac{x_i}{r}\right)u_{ij}.
\end{equation}
Since
\[
\sum_{i=2}^n\left(\frac{x_i}{r}\right)^2=1,
\]
then
\[
\sum_{i=2}^n
\frac{x_i}{r}\,
\partial_j\left(\frac{x_i}{r}\right)
=0.
\]
Therefore
\begin{equation}\label{partialdue39549968483}
\sum_{i,j=2}^n
\partial_j\left(\frac{x_i}{r}\right)u_{ij}
=
u_r\sum_{i,j=2}^n
\left(
\partial_j\left(\frac{x_i}{r}\right)
\right)^2
+
\sum_{j=2}^n\partial_j u_r
\sum_{i=2}^n
\frac{x_i}{r}\,
\partial_j\left(\frac{x_i}{r}\right)
=
u_r\sum_{i,j=2}^n
\left(
\partial_j\left(\frac{x_i}{r}\right)
\right)^2.
\end{equation}
Therefore, noticing that 
\begin{equation*}
\sum_{i,j=2}^n
\left(
\partial_j\left(\frac{x_i}{r}\right)
\right)^2=
\sum_{i,j=2}^n
\left(\frac{\delta_{ij}}r-\frac{x_i x_j}{r^3}\right)^2=
\frac{n-1}{r^2}
-\frac{2}{r^4}\sum_{i=2}^n x_i^2
+\frac1{r^6}
\sum_{i,j=2}^n x_i^2x_j^2=\frac{n-2}{r^2},
\end{equation*}
we conclude by \eqref{partialuno3845969679796} and \eqref{partialdue39549968483} that 
\[
2\sum_{i=2}^n\sum_{j,k=1}^n
\Ae_{jk}
\partial_j\left(\frac{x_i}{r}\right)u_{ik}
=
2(n-2)W_\varepsilon^2\frac{u_r}{r^2}.
\]
Combining the above computations, \eqref{prole7} follows.
\end{proof}
The following basic properties of $u$ follow easily by its construction.
\begin{proposition}\label{prop:mono-boundary}
It holds that 
\begin{equation}\label{eq:mono-basic}
    u\leq Kx_1\qquad\text{in }B_R\cap\{x_1>0\}.
\end{equation}
\begin{equation}\label{eq:mono-boundary}
 Du=Ke_1+d(x)\frac{x}{R}\quad \text{on }\partial B_R,\qquad \text{where }d(x)\coloneqq \left\langle D(u-Kx_1),\frac{x}{R}\right\rangle\geq 0\quad\text{on } \partial B_R\cap \{x_1\ge 0\}.
\end{equation}
\end{proposition}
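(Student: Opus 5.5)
The plan is to prove the two claims separately. For \eqref{eq:mono-basic} we compare $u$ with the linear function $Kx_1$ on the half ball; for \eqref{eq:mono-boundary} we read off the boundary gradient from the Dirichlet condition and then use \eqref{eq:mono-basic} to fix the sign of $d$.

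\textbf{Proof of \eqref{eq:mono-basic}.} Set $\Om=B_R\cap\{x_1>0\}$ and $v(x)=Kx_1$. Since $Dv=Ke_1$ and $D^2v=0$, we get
\begin{equation*}
\Ne v=-4\langle Ke_1,x\rangle=-4Kx_1\leq 0\qquad\text{on }\Om,
\end{equation*}
so $v$ is a supersolution. On $\partial\Om\cap\{x_1=0\}$, oddness of $u$ in $x_1$ (\Cref{propositiondiriboundcondex}) gives $u=0=v$. On $\partial\Om\cap\partial B_R$ we have $u=Kx_1=v$ by the boundary condition. Since $\Ne u=0$, \Cref{prop_comparison_principle} applied on $\Om$ (with $\sigma=1$) yields $u\leq v$ on $\Om$. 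The comparison principle is stated for domains, and $\Om$ is connected, so it applies directly.

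\textbf{Decomposition of $Du$ on $\partial B_R$.} The function $g\coloneqq u-Kx_1$ is smooth up to $\partial B_R$ (again by \Cref{propositiondiriboundcondex}) and vanishes identically there. Hence its tangential derivatives vanish on $\partial B_R$, and $Dg$ is parallel to the outer unit normal $x/R$. Concretely,
\begin{equation*}
Dg=\left\langle Dg,\frac{x}{R}\right\rangle\frac{x}{R}=d(x)\frac{x}{R}\qquad\text{on }\partial B_R,
\end{equation*}
which is exactly $Du=Ke_1+d\,x/R$.

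\textbf{Sign of $d$.} Fix $x_0\in\partial B_R$ with $x_{0,1}\geq 0$. For small $s>0$ the point $(1-s)x_0$ lies in $B_R$. If $x_{0,1}>0$, it also lies in $\{x_1>0\}$, so \eqref{eq:mono-basic} gives $g((1-s)x_0)\leq 0=g(x_0)$. Therefore
\begin{equation*}
-\frac{d}{ds}\Big|_{s=0}g((1-s)x_0)=\langle Dg(x_0),x_0\rangle=R\,d(x_0)\geq 0,
\end{equation*}
i.e.\ $d(x_0)\geq 0$. If $x_{0,1}=0$, we conclude by continuity of $d$, approximating $x_0$ by boundary points with positive first coordinate.

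This lemma is essentially routine. The only point needing care is the use of oddness to obtain the boundary inequality on the flat part $\{x_1=0\}$ of $\partial\Om$, together with the continuity argument for points of $\partial B_R$ on that hyperplane.
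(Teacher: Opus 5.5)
Your proof is correct and follows the paper's argument. You compare $u$ with the supersolution $Kx_1$ on the half-ball, using oddness on the flat part of the boundary, then read off $D(u-Kx_1)=d\,x/R$ on $\partial B_R$ from the vanishing tangential derivatives and get $d\geq 0$ from the one-sided radial derivative. The differences are cosmetic: you invoke the quasilinear comparison principle of \Cref{prop_comparison_principle} where the paper uses the linear maximum principle for the frozen-coefficient equation \eqref{linear_elliptic_forreg} (equivalent here since $D^2(Kx_1)=0$), and you handle boundary points with $x_1=0$ explicitly by continuity.
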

\begin{proof}
First, notice that 
\begin{equation*}
    \Ne Kx_1=-4Kx_1\le 0\qquad\text{on }B_R\cap\{x_1\ge 0\},\qquad u\equiv K x_1\qquad\text{on }\partial \big(B_R\cap\{x_1\ge 0\}\big).
\end{equation*}
The last identity follows by \eqref{eq_diri_ball_eps} and because $u(0,x_2,\ldots,x_n)=0$ by oddness. Therefore, the comparison principle \cite[Theorem 3.3]{MR1814364} (cf. \eqref{linear_elliptic_forreg} and the subsequent observation) yields \eqref{eq:mono-basic}. Next, we prove \eqref{eq:mono-boundary}. Since $u-Kx_1=0$ on $\partial B_R$, its tangential derivative vanish there, and, when $x_1\ge 0$, its outward radial derivative is non-negative by \eqref{eq:mono-basic}. Therefore
\begin{equation*}
    D(u-Kx_1)=\left\langle D(u-Kx_1),\frac{x}{R}\right\rangle\frac{x}{R},\qquad d(x)\coloneqq \left\langle D(u-Kx_1),\frac{x}{R}\right\rangle\geq 0,\qquad x\in \partial B_R\cap \{x_1\ge 0\}.
\end{equation*}
The thesis follows.
\end{proof}
We begin with the first monotonicity results.
\begin{proposition}\label{propositionboh39549695}
   It holds that 
   \begin{equation}\label{eq:mono-angular-positive}
 \omega_i\coloneqq x_i u_1-x_1u_i>0\qquad\text{in }B_R\cap\{x_i>0\},\qquad i=2,\ldots,n
\end{equation}
    and
    \begin{equation}\label{tesirotazioneultimapropriowufnj4834}
   r u_1-x_1u_ r\ge 0\quad \text{in }B_R\cap \{r\ge 0\}.
\end{equation}
\end{proposition}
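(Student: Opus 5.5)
Fix $i\in\{2,\ldots,n\}$. The function $\omega_i=x_iu_1-x_1u_i$ is the derivative of $u$ along the rotation field $x_i\partial_1-x_1\partial_i$. Since $\Ne$ involves only $|Du|^2$, $|x|^2$ and $\langle Du,x\rangle$ besides the Hessian, it is invariant under rotations in the $(x_1,x_i)$-plane. Hence $\omega_i$ solves the linearized equation $\Le\omega_i=0$. I would check this directly from \eqref{prole2}, \eqref{prole3} and \eqref{prole4}. The product rule gives
\[
\Le(x_iu_1)-\Le(x_1u_i)=x_i(4u_1-8x_1\Delta u)-x_1(4u_i-8x_i\Delta u)+u_1\Le x_i-u_i\Le x_1+2(\Ae_{i1}-\Ae_{1i}),
\]
with the last two cross terms being $2\sum_k\Ae_{ik}u_{1k}-2\sum_k\Ae_{1k}u_{ik}$. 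I expect every term to cancel. The $\Delta u$ terms cancel outright. The terms $u_1\Le x_i-u_i\Le x_1$ produce $-2u_1(D^2uDu)_i+2u_i(D^2uDu)_1-4(u_1x_i-u_ix_1)$. Using $\Ae=\We^2I-Du\otimes Du$, the cross terms equal $-2u_i(D^2uDu)_1+2u_1(D^2uDu)_i$. Together with $4(x_iu_1-x_1u_i)$ from the first line, the total is $0$.

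Next I would work on $\Om_i=B_R\cap\{x_i>0\}$ with the maximum principle for $\Le$, which is linear, uniformly elliptic and has no zero-order term. On the flat part $\{x_i=0\}$, the invariance of $u$ under orthogonal maps of $x_2,\ldots,x_n$ (reflection $x_i\mapsto-x_i$) makes $u$ even in $x_i$. So $u_i=0$ there and $\omega_i=0$. On $\partial B_R\cap\{x_i>0\}$, \eqref{eq:mono-boundary} gives $Du=Ke_1+d\,x/R$. Then $\omega_i=x_iK+d\,x_ix_1/R-x_1d\,x_i/R=Kx_i>0$. The weak maximum principle therefore gives $\omega_i\ge0$ on $\Om_i$. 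The strong maximum principle upgrades this to $\omega_i>0$ in the interior, since $\omega_i\not\equiv0$ because of its boundary values. Note that no sign of $d$ is needed here; the $d$-terms cancel identically.

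For \eqref{tesirotazioneultimapropriowufnj4834}, I would fix a point with $r>0$ and rotate in $x_2,\ldots,x_n$ so that $x=(x_1,r,0,\ldots,0)$. By \eqref{gradienteinroti394969}, $u_r=u_2$ there. Hence $ru_1-x_1u_r=\omega_2$ at a point with $x_2=r>0$, which is positive by the first part. At $r=0$ we have $u_r=0$, and the quantity is $0\ge0$. The main obstacle is the cancellation verifying $\Le\omega_i=0$. If the bookkeeping is messy, I would instead derive it abstractly: differentiate the identity $\Ne(u\circ \mathcal R_\theta)=0$ at $\theta=0$, where $\mathcal R_\theta$ is the rotation in the $(x_1,x_i)$-plane, which preserves both $B_R$ and the operator.
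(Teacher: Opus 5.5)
Your proposal is correct and follows the paper's proof step by step. You verify $\Le\omega_i=0$ from the product rule \eqref{prole4} together with \eqref{prole2}--\eqref{prole3}; you get the boundary values $\omega_i=0$ on $\{x_i=0\}$ from evenness in $x_i$, and $\omega_i=Kx_i$ on the spherical part from \eqref{eq:mono-boundary}; you conclude with the weak and strong maximum principles, and pass to the second claim by rotation invariance in $x_2,\ldots,x_n$. Your explicit reduction $ru_1-x_1u_r=\omega_2$ at the rotated point $(x_1,r,0,\ldots,0)$, and your remark that the $d$-terms cancel (so no sign of $d$ is needed where $x_1<0$), only spell out what the paper leaves implicit. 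The term $2(\Ae_{i1}-\Ae_{1i})$ in your display is a typo for the cross terms $2\sum_k\Ae_{ik}u_{1k}-2\sum_k\Ae_{1k}u_{ik}$ that you write out right afterwards.
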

\begin{proof}
Fix $i=2,\ldots,n$.
A direct computation yields
\begin{equation}\label{eq:mono-angular}
 \Le\omega_i=0\qquad \text{on } B_R\cap \{x_i\ge 0\}.
\end{equation}
Indeed,
\begin{equation*}
\begin{split}
      \Le \omega_i\overset{\eqref{prole4}}&{=}x_i\Le u_1+u_1\Le x_i-x_1\Le u_i-u_i\Le x_1+2\sum_{j,k=1}^n\Ae_{jk}\delta_{ik}u_{1j}-2\sum_{j,k=1}^n\Ae_{jk}\delta_{1k}u_{ij}\\
    \overset{\eqref{prole2},\eqref{prole3}}&{=}4 x_i u_1-8 x_1 x_i\Delta u+2u_1u_i\Delta u-2(D^2 u\,Du)_i u_1-4 x_iu_1 \\
    &\quad -4 x_1 u_i+8 x_1 x_i\Delta u-2u_1u_i\Delta u+2(D^2 u\,Du)_1 u_i+4 x_1 u_i +2\sum_{j=1}^n\Ae_{ji}u_{1j}-2\sum_{j=1}^n\Ae_{j1}u_{ij}\\
    &=0.
\end{split}
\end{equation*}
When $x_i=0$, $\omega_i=0$, because $u$ is even in $x_i$ and thus $u_i$ is odd on $x_i$ . On the other hand, \eqref{eq:mono-boundary} grants 
\begin{equation}\label{auxcontra394959}
    \omega_i=Kx_i\ge 0\qquad\text{on $\partial B_R\cap \{x_i\ge 0\}$}.
\end{equation} 
Therefore, the weak maximum principle \cite[Theorem 3.1]{MR1814364} yields $\omega_i\geq 0$ on $B_R\cap\{x_i>0\}$. If by contradiction there exists $\tilde x\in B_R\cap\{x_i>0\}$ such that $\omega_i(\tilde x)=0$, then by the previous step $\tilde x$ is an interior global minimum of $\omega_i$. Therefore, the strong maximum principle  \cite[Theorem 3.5]{MR1814364} implies that $\omega_i\equiv 0$  on $B_R\cap\{x_i>0\}$, a contradiction with \eqref{auxcontra394959}. Therefore, \eqref{eq:mono-angular-positive} follows. Eventually, \eqref{tesirotazioneultimapropriowufnj4834} is a direct consequence of \eqref{eq:mono-angular-positive} and the invariance of $u$ with respect to $x_2,\ldots,x_n$. 
\end{proof}
With the next result, we provide a sign to $\langle Du,x\rangle$ over the half-ball $B_R\cap\{x_1\ge 0\}$.
\begin{proposition}\label{prop:mono-radial}
It holds that
\begin{equation}\label{segnoxdeu38459694993}
     \langle x,Du\rangle\geq0\qquad\text{in }B_R\cap\{x_1\geq0\}.
\end{equation}
\end{proposition}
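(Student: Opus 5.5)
The plan is to apply the maximum principle to $v\coloneqq\langle x,Du\rangle$ on the half-ball $\Om\coloneqq B_R\cap\{x_1>0\}$, using the linearized operator $\Le$ and the identities \eqref{prole1} and \eqref{prole5}.

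\textbf{Boundary values.} On $\overline B_R\cap\{x_1=0\}$, the oddness of $u$ in $x_1$ gives $u(0,x_2,\ldots,x_n)=0$. Hence $u_i=0$ there for $i\geq 2$, and so $v=x_1u_1=0$. On $\partial B_R\cap\{x_1\geq0\}$, \eqref{eq:mono-boundary} gives
\[
v=\Big\langle x,Ke_1+d\frac{x}{R}\Big\rangle=Kx_1+Rd\geq0 .
\]
So $v\geq0$ on $\partial\Om$. It remains to rule out a negative interior minimum.

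\textbf{Interior minimum.} Assume by contradiction that $\min_{\overline\Om}v<0$, attained at $\tilde x\in\Om$. At $\tilde x$ we have $Dv=Du+D^2u\,x=0$, i.e. $D^2u\,x=-Du$, and $D^2v\geq0$. Since $\Ae$ is positive definite, this gives $\Le v(\tilde x)=\trace(\Ae D^2v)\geq0$. Inserting this into \eqref{prole5} yields
\[
(16|x|^2+2\eps^2)\Delta u\leq 16v<0\qquad\text{at }\tilde x,
\]
so $\Delta u(\tilde x)<0$.

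Next I would use the equation $\Ne u=0$ at $\tilde x$, namely $\We^2\Delta u-\langle D^2u\,Du,Du\rangle=4v$, together with the constraint $D^2u\,x=-Du$. The latter gives
\[
\langle D^2u\,x,x\rangle=-v>0,\qquad \langle D^2u\,x,Du\rangle=-|Du|^2 .
\]
Thus $D^2u$ has a positive direction $x$ and a strongly negative mixed term. The goal is to show that these relations are incompatible with $\Delta u<0$ combined with the equation. Concretely, I would decompose $D^2u(\tilde x)$ in the frame $e_1,e_r,\tau_3,\ldots,\tau_n$ of \eqref{campiortogonali384956995}. By \eqref{eigenvectorsipreliminaries394596} the block on the $\tau_i$ is $\frac{u_r}{r}I$, and $u_r\geq0$ there since $x_1>0$ and \eqref{tesirotazioneultimapropriowufnj4834} holds. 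The problem then reduces to a $2\times2$ computation in the plane spanned by $e_1$ and $e_r$. In that plane, $x$ and $Du$ are explicit, and $D^2u\,x=-Du$ determines the $2\times2$ Hessian up to one free parameter. The sign information from \Cref{propositionboh39549695}, $ru_1\geq x_1u_r$, together with $u_1>0$ near the boundary, should close the argument.

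\textbf{Main obstacle and fallback.} The hard step is the last one: turning $\Delta u<0$, the equation and $D^2u\,x=-Du$ into a contradiction. The quantity $\Delta u$ in \eqref{prole5} has no a priori sign, and the $\eps$-terms do not cancel cleanly. If the pointwise algebra fails, the fallback is an auxiliary function $w=v-\mu u$. Here \eqref{prole1} and \eqref{prole5} give
\[
\Le w=(16-8\mu)v-\big((16-8\mu)|x|^2+(2-2\mu)\eps^2\big)\Delta u ,
\]
and with $\mu=1$ this becomes $\Le(v-u)=8\big(v-|x|^2\Delta u\big)$. I would then try to combine this with the height bounds $x_1\leq u\leq Kx_1$ from \Cref{propunistimamain3948573883derivata} and \eqref{eq:mono-basic}, which control the boundary behaviour of $w$. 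Finally, I would run a continuity argument in $R$ or in $\eps$, starting from a configuration where $v\geq0$ is clear, to exclude the first touching point with $v=0$ via the strong maximum principle.
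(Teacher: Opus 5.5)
Your framework is the paper's: a negative interior minimum $\tilde x$ of $v=\langle x,Du\rangle$, the relation $D^2u\,x=-Du$, the inequality $\Le v(\tilde x)\ge 0$, and a reduction to the plane of $e_1,e_r$ with one free Hessian entry. However, \textbf{there is a genuine gap}: the decisive step is only announced (``should close the argument''), and the sign you propose to feed into it is wrong.

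\textbf{The sign of $u_r$.} The inequality $ru_1\ge x_1u_r$ from \eqref{tesirotazioneultimapropriowufnj4834} gives an \emph{upper} bound on $u_r$, not $u_r\ge 0$. The bound $u_r\ge0$ is only proved in the next proposition, and that proof uses the present one. In fact, at $\tilde x$ (with $r>0$) one necessarily has $u_r<0$. Use the orthonormal basis $e_\varrho=(x_1e_1+re_r)/|x|$, $e_\theta=(re_1-x_1e_r)/|x|$, $\tau_3,\ldots,\tau_n$, and write $Du=ae_\varrho+be_\theta$ with $a=v/|x|<0$ and $b=(ru_1-x_1u_r)/|x|\ge 0$. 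Then $u_r=(ra-x_1b)/|x|<0$. The information ``$u_1>0$ near the boundary'' tells you nothing at the interior point $\tilde x$ and is not needed.

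\textbf{How the correct sign closes the argument.} The constraint gives $D^2u(e_\varrho,e_\varrho)=-a/|x|$ and $D^2u(e_\varrho,e_\theta)=-b/|x|$. The $\tau_i$ are eigenvectors with eigenvalue $u_r/r$. Eliminating the free entry $D^2u(e_\theta,e_\theta)$ between $\Delta u$ and the equation $\Ne u=0$ yields
\[
(a^2+4|x|^2+\eps^2)\Delta u=4v-\frac{a(a^2+b^2)}{|x|}-(n-2)b^2\frac{u_r}{r}>4v .
\]
The last term is non-negative precisely because $u_r<0$; with your claimed sign it would work against you. Since $v<0$ and $4/(4s+\eps^2)<8/(8s+\eps^2)$, this gives
\[
\Delta u>\frac{4v}{4|x|^2+\eps^2}>\frac{8v}{8|x|^2+\eps^2}.
\]
This contradicts the bound $\Delta u\le 8v/(8|x|^2+\eps^2)$ that you correctly derived from \eqref{prole5}. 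Equivalently, inserting it into \eqref{prole5} gives $\Le v(\tilde x)<0$.

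\textbf{The case $r=0$.} You must also treat separately the points on the positive $x_1$-axis, where $e_r$ is undefined. There $Du=u_1e_1$ with $u_1<0$ and $x_1u_{11}=-u_1$. The equation then gives $(u_1^2+4x_1^2+\eps^2)\Delta u=4v-u_1^3/x_1>4v$, and you conclude in the same way.

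\textbf{The fallback.} The auxiliary function $v-\mu u$ combined with a continuity argument in $R$ or $\eps$ is not developed into an argument, and it is unnecessary once the pointwise algebra above is done with the correct sign.
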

\begin{proof}
Notice that, by oddness of $x_1\mapsto u(x_1,\ldots,x_n)$,
\begin{equation*}
    u_j(0,x_2,\ldots, x_n)=0\qquad j=2,\ldots,n.
\end{equation*}
Therefore, $\langle Du,x\rangle=0$ when $\{x_1=0\}$. 
On the other hand, \eqref{eq:mono-boundary} gives $\langle x,Du\rangle=Kx_1+dR\geq0$ on $\partial B_R\cap \{x_1\ge 0\}$. Suppose by contradiction that \eqref{segnoxdeu38459694993} fails. Therefore, there exists $\tilde x=\in B_R\cap \{x_1\ge 0\}$ such that $\langle Du,x\rangle$ has a negative local minimum at $\tilde x$. Assume first that $r>0$ at $\tilde x$. In the forthcoming computations, evaluation at $\tilde x$ is tacitly assumed. Let $\tau_3,\ldots,\tau_n$ be as in \eqref{campiortogonali384956995}.
Define
\begin{equation*}
 e_\varrho\coloneqq\frac{x_1e_1+r e_r}{|x|},
 \qquad e_\theta\coloneqq\frac{r e_1-x_1e_r}{|x|}.
\end{equation*}
Notice that $e_\varrho,e_\theta, \tau_3,\ldots,\tau_n$ is still an orthonormal basis. Moreover,
\begin{equation}\label{segnidiaeb}
     a\coloneqq \left\langle Du,e_\varrho\right\rangle =\frac{\left\langle x,Du\right\rangle}{|x|}<0,
 \qquad
 b\coloneqq\left\langle Du, e_\theta\right\rangle=\frac{r u_1-x_1u_r}{|x|}\overset{\eqref{tesirotazioneultimapropriowufnj4834}}{\ge }0,\qquad |Du|^2=a^2+b^2.
\end{equation}
 A direct computation grants
\begin{equation}\label{eq:mono-radial-ur}
 u_r=\frac{ra-x_1b}{\varrho}<0.
\end{equation}
Since $\tilde x$ is an interior minimum point for $\langle x,Du\rangle$, then $D\langle x,Du\rangle=0$, whence 
\begin{equation*}
    D^2u\,x=-Du,\qquad D^2 u\,e_\varrho=\frac{1}{|x|}D^2 u\,x=-\frac{1}{|x|}Du,\qquad \left\langle D^2u\,e_\varrho,e_\varrho\right\rangle=-\frac {a}{|x|},\qquad  \left\langle D^2u\,e_\varrho,e_\theta\right\rangle=-\frac{b}{|x|}.
\end{equation*}
Combining the above computations with \eqref{eigenvectorsipreliminaries394596} grants
\begin{equation}\label{euqationejekcvoernftrfojnt5849}
    \Delta u=-\frac{a}{|x|}+\langle D^2 u\,e_\theta,e_\theta\rangle+\frac{(n-2)u_r}{r},\qquad \langle D^2 u\,Du,Du\rangle=-\frac{a^3}{|x|}-\frac{2a b^2}{|x|}+b^2\langle D^2 u\,e_\theta,e_\theta\rangle.
\end{equation}
%\[
% \left\langle D^2u\,e_\varrho,e_\varrho\right\rangle=-\frac {a}{|x|},
% \qquad \left\langle D^2u\,e_\varrho,e_\theta\right\rangle=-\frac{b}{|x|}.
%\]
Therefore, \eqref{eq_diri_ball_eps} rewrites as 
\begin{equation}\label{laplacianonellecoordinate39499596}
     \frac{a\bigl(b^2-4|x|^2-\varepsilon^2\bigr)}{|x|}
 +(a^2+4|x|^2+\varepsilon^2)D^2u(e_\theta,e_\theta)
 +(n-2)W_\varepsilon^2\frac{u_r}{r}=4\langle x,Du\rangle.
\end{equation}
In particular, isolating $\langle D^2 u\,e_\theta,e_\theta\rangle$ in \eqref{euqationejekcvoernftrfojnt5849} and substituting it into \eqref{laplacianonellecoordinate39499596} yields
\begin{equation}\label{eq:mono-radial-trace}
 (a^2+4|x|^2+\varepsilon^2)\Delta u
 =4\langle x,Du\rangle-\frac{a(a^2+b^2)}{|x|}-(n-2)b^2\frac{u_r}{r}
 \overset{\eqref{segnidiaeb},\eqref{eq:mono-radial-ur}}{>}4\langle x,Du\rangle.
\end{equation}
Therefore, recalling that $\langle x,Du\rangle<0$ at $\tilde x$, 
\[
 \Delta u>
 \frac{4\langle x,Du\rangle}{a^2+4|x|^2+\varepsilon^2}
 >\frac{4\langle x,Du\rangle}{4|x|^2+\varepsilon^2}.
\]
Consequently, \eqref{prole5} implies
\begin{equation}\label{eq:mono-radial-short}
 \Le\langle x,Du\rangle
 <16\langle x,Du\rangle-(16|x|^2+2\varepsilon^2)
       \frac{4\langle x,Du\rangle}{4|x|^2+\varepsilon^2}
 =\frac{8\varepsilon^2\langle x,Du\rangle}{4|x|^2+\varepsilon^2}<0.
\end{equation}
On the other hand, since $\tilde x$ is a local minimum point of $\langle Du,x\rangle$, then $\Le \langle x,Du\rangle\geq 0$, a contradiction. Finally, we are left to deal with the case $r=0$. In this case, $Du=u_1e_1$ and $\langle x,Du\rangle=x_1u_1<0$, whence $u_1\le 0$. Moreover, since $D\langle x,Du\rangle=0$, then $u_{11}x_1=-u_1$. Thus \eqref{eq_diri_ball_eps} yields
\[
 (u_1^2+4x_1^2+\varepsilon^2)\Delta u
 =4\langle x,Du\rangle+u_1^2u_{11}
 =4\langle x,Du\rangle-\frac{u_1^3}{x_1}>4\langle x,Du\rangle.
\]
Therefore, arguing as in \eqref{eq:mono-radial-short}, we get a contradiction. The thesis follows. 
\end{proof}
Next, we prove monotonicity of $u$ along $e_r$ in the half-ball $B_R\cap \{x_1\ge 0\}$.
\begin{proposition}\label{prop:mono-transverse}
It holds that 
\begin{equation}\label{eq:mono-transverse-positive}
    u_r\geq0\qquad\text{in }B_R\cap\{x_1\geq0\}.
\end{equation}
\end{proposition}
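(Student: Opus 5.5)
We argue by contradiction with the maximum principle for $\Le$, as in \Cref{prop:mono-radial}, now applied to $u_r$ on the region $\Om'\coloneqq B_R\cap\{x_1>0\}\cap\{r>0\}$.

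\textbf{Boundary values.} We first collect the sign of $u_r$ on $\partial\Om'$.
\begin{itemize}
\item On $\{x_1=0\}$ we have $u_r=0$, since $u$ is odd in $x_1$ and hence $u_j(0,x_2,\ldots,x_n)=0$ for $j\ge 2$.
\item On $\{r=0\}$ we have $u_r=0$ by the convention fixed above.
\item On $\partial B_R\cap\{x_1\ge 0\}$, \eqref{eq:mono-boundary} gives $u_r=\langle Ke_1+d\,x/R,e_r\rangle=d\,r/R\ge 0$.
\end{itemize}
Hence $u_r\ge 0$ on $\partial\Om'$. Suppose \eqref{eq:mono-transverse-positive} fails. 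Then $u_r$ attains a negative minimum at some interior point $\tilde x\in\Om'$, and there $Du_r=0$ and $\Le u_r\ge 0$, because $\Le$ is elliptic and the first-order terms vanish.

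\textbf{Contradiction from \eqref{prole7}.} At $\tilde x$ we have
\[
\Le u_r=4u_r-8r\Delta u+(n-2)\We^2\frac{u_r}{r}.
\]
Since $u_r<0$, the first and last terms are negative. It therefore suffices to show $\Delta u\ge 0$ at $\tilde x$, or at least $-8r\Delta u\le -4u_r$. To this end we use $Du_r=0$ together with \eqref{eigenvectorsipreliminaries394596}. In the basis $e_1,e_r,\tau_3,\ldots,\tau_n$ we have:
\begin{itemize}
\item $D^2u\,\tau_i=(u_r/r)\tau_i$;
\item $\langle D^2u\,e_r,e_r\rangle=\partial_r u_r=0$;
\item $\langle D^2u\,e_1,e_r\rangle=\partial_1u_r=0$.
\end{itemize}
So at $\tilde x$ the Hessian is diagonal with entries $u_{11}$, $0$, and $u_r/r$ (multiplicity $n-2$). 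Writing $Du=u_1e_1+u_re_r$, the equation $\Ne u=0$ becomes
\[
(u_r^2+4|x|^2+\eps^2)u_{11}+(u_1^2+u_r^2+4|x|^2+\eps^2)(n-2)\frac{u_r}{r}=4(x_1u_1+ru_r).
\]
This determines $u_{11}$, and therefore
\[
\Delta u=u_{11}+(n-2)\frac{u_r}{r}=\frac{4\langle x,Du\rangle-(n-2)u_1^2u_r/r}{u_r^2+4|x|^2+\eps^2}.
\]
By \Cref{prop:mono-radial}, $\langle x,Du\rangle\ge 0$, and since $u_r<0$ the second numerator term is $\ge 0$. Hence $\Delta u\ge 0$. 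This gives $\Le u_r\le 4u_r<0$, contradicting $\Le u_r\ge 0$.

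\textbf{Main obstacle.} The delicate point is justifying the structure of the Hessian at $\tilde x$, in particular that $\partial_1u_r=\langle D^2u\,e_1,e_r\rangle$ and $\partial_r u_r=\langle D^2u\,e_r,e_r\rangle$. This follows because $e_r$ is constant along $e_1$ and along the radial direction $e_r$, so differentiating $u_r=\langle Du,e_r\rangle$ in these directions produces no derivative of $e_r$. We must also check that $\tilde x$ avoids $\{r=0\}$ and $\{x_1=0\}$, which holds since $u_r=0$ there and $u_r(\tilde x)<0$.

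If $n=2$, the $\tau_i$ are absent, $u_r=\pm u_2$, and the same computation applies without the $(n-2)$ terms. One could alternatively allow a nonstrict inequality and invoke the strong maximum principle \cite[Theorem 3.5]{MR1814364}, but the strict inequality above already suffices.
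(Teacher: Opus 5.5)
Your proof is correct and follows the paper's argument essentially step by step. You use the same boundary values for $u_r$ and the same interior negative minimum $\tilde x$. From $u_{1r}=u_{rr}=0$ at $\tilde x$ you obtain $(u_r^2+4|x|^2+\eps^2)\Delta u=4\langle x,Du\rangle-(n-2)u_1^2u_r/r\geq 0$, which gives the same sign contradiction in \eqref{prole7}. One minor slip: the last term of \eqref{prole7} is $(n-2)\We^2u_r/r^2$, not $(n-2)\We^2u_r/r$, but this does not affect the sign argument.
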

\begin{proof}
Fix $x\in B_R\cap \{x_1\ge 0\}\cap \{r\ge 0\}$. By the discussion before \Cref{propositionboh39549695}, \eqref{eq_diri_ball_eps} rewrites as
\begin{equation}\label{eq:mono-meridional}
 (u_r^2+4|x|^2+\varepsilon^2)u_{11}
   +(u_1^2+4|x|^2+\varepsilon^2)u_{rr}+(n-2)W_\varepsilon^2\frac{u_r}{r}-2u_1u_r u_{1r}
=4\langle x,Du\rangle.
\end{equation}
Since $u(0,x_2,\ldots,x_n)=0$, then $u_r=0$ when $x_1=0$. Moreover, we already know that $u_r=0$ when $r=0$. Finally, on $\partial B_R\cap\{x_1\ge 0\}$, $u_r=d rR^{-1}\geq 0$ by \eqref{eq:mono-boundary}. Assume by contradiction that \eqref{eq:mono-transverse-positive} fails. Then, there exists $\tilde x\in B_R\cap\{x_1\ge 0\}\cap \{r\ge 0\}$ which is a minimum point for $u_r$ and such that $u_r(\tilde x)\le 0$. In particular, $u_{1r}(\tilde x)=u_{rr}(\tilde x)=0$. Moreover, by \eqref{eigenvectorsipreliminaries394596},  
\begin{equation}\label{laplacianincoordinatesdfrfr}
    \Delta u=u_{11}+u_{rr}+(n-2)\frac{u_r}{r}.
\end{equation}
Therefore, \eqref{eq:mono-meridional} implies 
\[
(u_r^2+4|x|^2+\varepsilon^2)\Delta u
 =4\langle x,Du\rangle-(n-2)u_1^2\frac{u_r}{r}\geq0\qquad\text{at }\tilde x,
\]
where the last inequality follows from \eqref{segnoxdeu38459694993} and $u_r(\tilde x)<0$. 
Therefore, since $\tilde x$ is an interior minimum point of $u_r$ and $\Le$ is elliptic,
\begin{equation*}
 0\leq \Le u_r\overset{\eqref{prole7}}{=}4u_r-8r\Delta u+(n-2)W_\varepsilon^2\frac{u_r}{r^2}\le 0\qquad \text{at }\tilde x,
\end{equation*}
a contradiction. The thesis follows.
\end{proof}
As a consequence of \Cref{propositionboh39549695} and  \Cref{prop:mono-transverse}, we gain strict monotonicity of $u$ along $x_1$ in $B_R\cap\{x_1\ge 0\}$ and strict positivity of $\langle Du,x\rangle$ on the same region.
\begin{proposition}\label{prop:mono-positive}
It holds that 
\begin{equation}\label{eq:mono-first-positive}
 u_1>0,\qquad \langle Du,x\rangle\ge 0\qquad\text{in }B_R\cap\{x_1\ge 0\}.
\end{equation}
\end{proposition}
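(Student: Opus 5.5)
The inequality $\langle Du,x\rangle\ge 0$ on $B_R\cap\{x_1\ge0\}$ is exactly \Cref{prop:mono-radial}, so only $u_1>0$ needs proof. I first get $u_1\ge 0$ from the decomposition \eqref{gradienteinroti394969}, and then upgrade to strict positivity with the strong maximum principle for $\Le$ applied to $u_1$.

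\textbf{Non-strict bound.} Take $x\in B_R\cap\{x_1\ge 0\}$. If $r(x)>0$, \eqref{tesirotazioneultimapropriowufnj4834} gives $r u_1\ge x_1u_r$, and \Cref{prop:mono-transverse} gives $u_r\ge 0$. Hence $x_1u_r\ge 0$, so $u_1\ge 0$. If $r(x)=0$ and $x_1>0$, then $Du=u_1e_1$, and \Cref{prop:mono-radial} gives $x_1u_1=\langle x,Du\rangle\ge0$, so again $u_1\ge0$. If $x=0$, $u_1(0)\ge 0$ follows by continuity. So $u_1\ge 0$ on $\overline{B_R}\cap\{x_1\ge0\}$. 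By oddness of $u$ in $x_1$, $u_1$ is even in $x_1$, so in fact $u_1\ge 0$ on all of $\overline B_R$.

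\textbf{Strict positivity.} By \eqref{prole3}, $\Le u_1-4u_1=-8x_1\Delta u$. The right-hand side has no definite sign, so the zeroth-order term must be handled differently. Suppose $u_1(\tilde x)=0$ at some interior point $\tilde x$; this is a global minimum of $u_1$ on $B_R$. Then $Du_1(\tilde x)=0$, and $\sum_{ij}\Ae_{ij}(u_1)_{ij}\ge0$ at $\tilde x$.

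Since $u_1(\tilde x)=0$, we have $Du(\tilde x)=u_re_r$ and $u_{11}=u_{1r}=0$. If $x_1>0$ and $r>0$, \eqref{tesirotazioneultimapropriowufnj4834} gives $u_r\le0$, so $u_r=0$ by \Cref{prop:mono-transverse}. Thus $Du(\tilde x)=0$. Evaluating the equation $\Ne u=0$ at $\tilde x$ then gives $(4|x|^2+\eps^2)\Delta u=0$, i.e.\ $\Delta u(\tilde x)=0$. Therefore $\Le u_1(\tilde x)=0$, consistent with a minimum, so this pointwise argument yields no contradiction by itself.

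To conclude, I use the fact that $u_1$ solves the linear equation $\Le u_1-4u_1=-8x_1\Delta u$ with smooth coefficients. Since $u_1\ge0$, I would apply the strong maximum principle to $u_1$ directly, treating $c=-4\le 0$ as the zeroth-order term and requiring $-8x_1\Delta u\le 0$ near $\tilde x$. That sign condition is not available, so this route needs modification.

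The fallback is to use instead $\omega_i$ and $\langle x,Du\rangle$, which satisfy cleaner equations. The function $\omega_i$ is $\Le$-harmonic and strictly positive by \eqref{eq:mono-angular-positive}. From $r u_1-x_1u_r>0$ for $r>0$ (the strict form of \eqref{eq:mono-angular-positive}), at $\tilde x$ with $r>0$ we have $ru_1>x_1u_r\ge0$, so $u_1>0$ there. It remains to treat the axis $r=0$. There $u_1=0$ would force $Du=0$, so $u_1$ would attain its minimum $0$ along a point of the axis. For $x_1>0$, this contradicts the strict inequality $\langle x,Du\rangle>0$. That strict inequality I would obtain by the strong maximum principle applied to $\langle x,Du\rangle$ via \eqref{prole5}, arguing as in \eqref{eq:mono-radial-short}. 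The remaining cases are $x_1=0$ on the axis and the origin. For these I would use evenness of $u_1$ in $x_1$ together with the Hopf lemma for $\omega_i$ on $\{x_i=0\}$: this gives $\partial_{x_i}\omega_i=u_1>0$ there.

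\textbf{Main obstacle.} The hardest step is this last case: obtaining strictness at points of $\{x_1=0\}$ and at the origin, where the Hopf argument for $\omega_i$ has to be carried out carefully.
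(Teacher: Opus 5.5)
\paragraph{Where the proposal breaks down.} Your argument at points with $r>0$ is correct and coincides with the paper's. The strict inequality $ru_1-x_1u_r>0$ together with $u_r\geq 0$ gives $u_1>0$; this strict inequality is what \eqref{tesirotazioneultimapropriowufnj4834} actually says, because the paper redefines \texttt{\textbackslash ge} as $>$. The genuine gap is on the axis $\{r=0,\ x_1>0\}$, which is the only delicate case. The origin is not even in the region: with the paper's convention the statement concerns $B_R\cap\{x_1>0\}$.

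On the axis $Du=u_1e_1$ and $\langle x,Du\rangle=x_1u_1$. So deducing $u_1>0$ from a strict inequality $\langle x,Du\rangle>0$ is circular, and the route you propose for the latter does not work.
\begin{itemize}
\item By \eqref{prole5}, $\Le\langle x,Du\rangle-16\langle x,Du\rangle=-(16|x|^2+2\eps^2)\Delta u$. The right-hand side has no sign in any neighbourhood, so no strong maximum principle applies.
\item The pointwise argument of \eqref{eq:mono-radial-short} degenerates at a zero minimum. If $u_1(\tilde x)=0$ at an axis point, then $Du(\tilde x)=0$ and the equation forces $(4|x|^2+\eps^2)\Delta u(\tilde x)=0$. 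Hence $\Le\langle x,Du\rangle(\tilde x)=0$, which is perfectly consistent with a minimum. This is the same dead end you already met for $u_1$ itself.
\end{itemize}

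\paragraph{The missing idea.} The Hopf argument for $\omega_2$, which you only invoke where $x_1=0$, works at every axis point. A point $x=(x_1,0,\ldots,0)\in B_R$ lies on the flat part $\{x_2=0\}$ of $\partial(B_R\cap\{x_2>0\})$. There $\Le\omega_2=0$, $\omega_2>0$ inside by \eqref{eq:mono-angular-positive}, and $\omega_2(x)=0$. The Hopf lemma therefore gives $\partial_{x_2}\omega_2(x)=u_1(x)-x_1u_{22}(x)>0$.

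The extra term $x_1u_{22}$, which vanishes only at the origin, is controlled by \Cref{prop:mono-transverse}. Since $u_2=0$ on the axis and $u_2(x_1,\delta,0,\ldots,0)=u_r(x_1,\delta,0,\ldots,0)\geq 0$ for small $\delta>0$, one gets $u_{22}(x)\geq 0$. Hence $u_1(x)>x_1u_{22}(x)\geq 0$, which is exactly the paper's proof.

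\paragraph{The second claim is strict.} With the same convention, the second claim of the statement is $\langle Du,x\rangle>0$ on $B_R\cap\{x_1>0\}$, so it is not just \Cref{prop:mono-radial}. It follows at once from the first claim, via $\langle Du,x\rangle=x_1u_1+ru_r$ and $u_r\geq 0$.
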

\begin{proof}
Fix $x\in B_R\cap\{x_1\ge 0\}$. Assume first that $r\ge 0$. In this case, the first inequality follows combining \eqref{tesirotazioneultimapropriowufnj4834} with \eqref{eq:mono-transverse-positive}, while the second inequality follows by the first one and by \eqref{eq:mono-transverse-positive}, since $\langle Du,x\rangle =x_1 u_1+r u_r$. Instead, assume that $r=0$, whence $x=(x_1,0,\ldots,0)$. Let $\omega_2$ be as in \Cref{propositionboh39549695}.  Notice that $x$ belongs to the flat portion of the boundary of $B_R\cap \{x_2\ge 0\}$, and the interior unit normal is exactly $e_2$ at $x$. We know by \eqref{eq:mono-angular-positive} that $\omega_2\ge 0$ on $B_R\cap \{x_2\ge 0\}$. Moreover, we saw in the proof of \Cref{propositionboh39549695} that $\omega_2=0$ at $x$. Since $\Le$ is uniformly elliptic, we apply the Hopf Lemma \cite[Lemma 3.4]{MR1814364} to infer
\begin{equation*}
    \partial _{x_2}\big(\omega_2\big)(x)=u_1(x)-x_1 u_{22}(x)\ge 0. 
\end{equation*}
The proof is concluded if $u_{22}(x)\geq 0$. To this aim, by definition of $u_r$ and recalling that $u_2(x)=0$,
\begin{equation*}
    u_{22}(x_1,0,0,\ldots,0)=\lim_{\delta\to 0^+}\frac{u_2(x_1,\delta,0,\ldots,0)}{\delta}=\lim_{\delta\to 0^+}\frac{u_r(x_1,\delta,0,\ldots,0)}{\delta}\overset{\eqref{eq:mono-transverse-positive}}{\geq}0.
\end{equation*}
The thesis follows.
\end{proof}
The proof of \Cref{prooffromboundtoboundderprop3848} is based on the following lower bound. 
\begin{proposition}\label{prop:mono-dilation}
It holds that 
\begin{equation}\label{eq:mono-dilation-conclusion}
 x_1u_1-u\geq0\qquad\text{in }B_R\cap\{x_1\geq0\}.
\end{equation}
\end{proposition}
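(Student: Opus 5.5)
The plan is a minimum-principle argument for the function $v\coloneqq x_1u_1-u$ on the half-ball $\Om\coloneqq B_R\cap\{x_1>0\}$, in the same style as \Cref{prop:mono-radial} and \Cref{prop:mono-transverse}. The first step is to check the sign of $v$ on $\partial\Om$.
On the flat part $\{x_1=0\}$ we have $u=0$ by oddness, so $v=0$ there.
On $\partial B_R\cap\{x_1\ge 0\}$ we have $u=Kx_1$, and by \eqref{eq:mono-boundary} $u_1=K+d\,x_1/R$. Hence
\begin{equation*}
v=\frac{d\,x_1^2}{R}\geq 0 .
\end{equation*}
So it suffices to exclude an interior point $\tilde x\in\Om$ at which $v$ attains a negative minimum. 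At such a point $v<0$ means $u>x_1u_1>0$, by \eqref{eq:mono-first-positive}.

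Next I will extract the first-order information at $\tilde x$. The condition $Dv=0$ reads $x_1Du_1+u_1e_1-Du=0$.
The first component gives $x_1u_{11}=0$, hence $u_{11}=0$.
The other components give $x_1u_{1j}=u_j$ for $j\ge 2$, that is, $u_{1r}=u_r/x_1$ when $r>0$.
I will then compute $\Le v$ using \eqref{prole1}--\eqref{prole4}:
\begin{equation*}
\Le(x_1u_1)=x_1\Le u_1+u_1\Le x_1+2\sum_k\Ae_{1k}u_{1k}.
\end{equation*}
Inserting \eqref{prole2} and \eqref{prole3}, the terms $\pm4x_1u_1$ cancel, and one gets
\begin{equation*}
\Le v=-8x_1^2\Delta u+2u_1^2\Delta u-4u_1(D^2u\,Du)_1+2\We^2u_{11}-8\langle x,Du\rangle+2(4|x|^2+\eps^2)\Delta u .
\end{equation*}
At $\tilde x$ I will simplify this using $u_{11}=0$ and $(D^2u\,Du)_1=u_ru_{1r}=u_r^2/x_1$. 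I will also use the meridional form \eqref{eq:mono-meridional}, which with $u_{11}=0$ expresses $u_{rr}$, and hence $\Delta u$ through \eqref{laplacianincoordinatesdfrfr}, in terms of $\langle x,Du\rangle$, $u_r^2/x_1$ and $(n-2)u_r/r$.

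The goal of this step is to show $\Le v<0$ at $\tilde x$, which contradicts $\Le v\ge 0$ at an interior minimum. For this I plan to use the sign information already available:
\begin{equation*}
u_1>0,\qquad u_r\ge 0,\qquad \langle x,Du\rangle\ge 0 ,
\end{equation*}
from \eqref{eq:mono-first-positive} and \eqref{eq:mono-transverse-positive}. I will also try to use the relation $u>x_1u_1$; if it is not needed to close the sign, I will drop it. The case $r=0$ will be handled separately, exactly as in \Cref{prop:mono-radial}. There $Du=u_1e_1$, the equation reduces to one involving only $u_{11}$ and the transverse second derivatives, and these transverse derivatives are nonnegative by the limit argument of \Cref{prop:mono-positive}.

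I expect the main obstacle to be this sign bookkeeping in $\Le v$ at $\tilde x$. The terms containing $\Delta u$ carry the coefficient $2(u_1^2-4x_1^2+4|x|^2+\eps^2)$, which is positive, so everything hinges on the sign of $\Delta u$ at $\tilde x$ after substituting the equation. My first attempt is to use \eqref{eq:mono-meridional} to write
\begin{equation*}
(u_1^2+4|x|^2+\eps^2)\,u_{rr}=4\langle x,Du\rangle+2u_r^2u_1/x_1-(n-2)\We^2u_r/r .
\end{equation*}
I will then check that the positive contributions to $\Delta u$ are dominated by $-8\langle x,Du\rangle-4u_1u_r^2/x_1$. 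If this does not close directly, the fallback is a strong-maximum-principle variant: show that $\Le v\le c\,v$ near $\tilde x$ for some bounded $c$, or replace $v$ by $v+\eta x_1$ with small $\eta>0$ and then let $\eta\to 0$.
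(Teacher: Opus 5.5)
Your proposal is correct and follows essentially the same route as the paper. It uses the same function $v=x_1u_1-u$, the same boundary analysis, and the same first-order conditions $u_{11}=0$, $x_1u_{1r}=u_r$ at an interior negative minimum. It then combines the meridional equation with the formula for $\Le v$ to contradict $\Le v\geq 0$. The domination you plan to check closes directly, because $\Delta u$ enters with coefficient $2(u_1^2+4r^2+\eps^2)$, which is strictly smaller than $2(u_1^2+4|x|^2+\eps^2)$. Substituting the equation then gives
\[
\Le v=-\frac{8x_1^2}{u_1^2+4|x|^2+\eps^2}\Big(4\langle x,Du\rangle+\frac{2u_1u_r^2}{x_1}\Big)-2(n-2)\frac{u_r^3}{r}\,\frac{u_1^2+4r^2+\eps^2}{u_1^2+4|x|^2+\eps^2}<0,
\]
so no fallback is needed. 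For $r=0$, the equation alone gives $\Delta u=4x_1u_1/\We^2>0$ and hence $\Le v=-8x_1^2\Delta u<0$, so the sign of the transverse second derivatives is not needed there.
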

\begin{proof}
Set $v=x_1u_1-u$. By \eqref{prole1}, \eqref{prole2}, \eqref{prole3} and \eqref{prole4}, 
\begin{equation*}\label{eq:mono-V-full}
\begin{split}
    \Le v&=x_1\Le u_1+u_1\Le x_1+2\sum_{i=1}^n\Ae_{1i}u_{1 i}-8\langle x, Du\rangle +2(4|x|^2+\eps^2)\,\Delta u\\
    &=x_1\big(4 u_1-8x_1\Delta u\big)+u_1\left(2 u_1\Delta u-2\sum_{i=1}^nu_{1 i}u_i-4x_1\right)+2\sum_{i=1}^n\Ae_{1i}u_{1 i}-8\langle x, Du\rangle +2(4|x|^2+\eps^2)\,\Delta u\\
    &=-8x_1^2\Delta u+2 u_1^2\Delta u-4u_1\sum_{i=1}^nu_{1 i}u_i+2\We^2u_{11}-8\langle x, Du\rangle +2(4|x|^2+\eps^2)\,\Delta u\\
    &=2\We^2u_{11}-4u_1\sum_{i=1}^nu_{1 i}u_i-8\langle x, Du\rangle +2\big(u_1^2+4r^2+\eps^2\big)\,\Delta u.
\end{split}
\end{equation*}
Exploiting \eqref{eq_diri_ball_eps},
\begin{equation}\label{serveperplapropdoudn3859}
    \begin{split}
        \Le v&=2\We^2u_{11}-4u_1\sum_{i=1}^nu_{1 i}u_i-2\We^2\,\Delta u+2\sum_{i,j=1}^n u_{ij}u_i u_j +2\big(u_1^2+4r^2+\eps^2\big)\,\Delta u.        .
    \end{split}
\end{equation}
Therefore, when $r\ge 0$ and recalling \eqref{gradienteinroti394969},
\begin{equation}\label{eq:mono-V}
    \begin{split}
        \Le v
        &=2\big(u_r^2+4|x|^2+\eps^2\big)u_{11}-8x_1^2\Delta u-2u_r^2\,\Delta u+2\sum_{i,j=2}^n u_{ij}u_i u_j\\
        &=2\big(u_r^2+4|x|^2+\eps^2\big)u_{11}-8x_1^2\Delta u-2u_r^2\,\Delta u+2u_r^2 u_{rr}\\
        \overset{\eqref{laplacianincoordinatesdfrfr}}&{=}2\big(4|x|^2+\eps^2\big)u_{11}-8x_1^2\Delta u-2(n-2)\frac{u_r^3}{r}
    \end{split}
\end{equation}
Notice that $v=0$ when $x_1=0$. Moreover, on $\partial B_R\cap\{x_1\ge 0\}$, \eqref{eq:mono-boundary} gives $v=dx_1^2R^{-1}\geq 0$. Assume by contradiction that \eqref{eq:mono-dilation-conclusion} fails. Then, there exists $\tilde x\in B_R\cap\{x_1\ge 0\}$ which is a minimum point for $v$ and such that $v(\tilde x)\le 0$. Assume first $r(\tilde x)>0$. Then $v_1=v_r=0$ at $\tilde x$, whence $u_{11}=0$ and $x_1 u_{1r}=u_r$ at $\tilde x$.
The combination of two facts with \eqref{eq:mono-meridional} and \eqref{laplacianincoordinatesdfrfr} grants 
\begin{equation}\label{nonricordopiu38495885}
 (u_1^2+4|x|^2+\varepsilon^2)\Delta u
 =4\langle x,Du\rangle+\frac{2u_1u_r^2}{x_1}-(n-2)\frac{u_r^3}{r} \qquad\text{at }\tilde x,
\end{equation}
and moreover, by \eqref{eq:mono-V},
\begin{equation}\label{auxiliare39459694939393993}
    \Le v=-8x_1^2\Delta u-2(n-2)\frac{u_r^3}{r}\qquad \text{at }\tilde x.
\end{equation}
Therefore, since $\tilde x$ is an interior minimum point of $v$ and $\Le$ is elliptic, \eqref{nonricordopiu38495885} and \eqref{auxiliare39459694939393993} yield
\begin{equation}
 \begin{split}
 \Le v=-
 \frac{8x_1^2}{u_1^2+4|x|^2+\varepsilon^2}
 \left(4\langle x,Du\rangle+\frac{2u_1u_r^2}{x_1}\right)-2(n-2)\frac{u_r^3}{r}
 \left(\frac{u_1^2+4r^2+\eps^2}{u_1^2+4|x|^2+\varepsilon^2}\right)<0,
 \end{split}
\end{equation}
where the last inequality follows combining \eqref{eq:mono-transverse-positive} and \eqref{eq:mono-first-positive}. This is a contradiction.
Finally, assume that $r(\tilde x)=0$.
 In this case, recall that $u_i(\tilde x)=0$ for $i=2,\ldots,n$ by invariance along $x_2,\ldots,x_n$. Moreover, again by minimality of $\tilde x$, $u_{11}(\tilde x)=0$. Therefore \eqref{eq_diri_ball_eps} yields 
\begin{equation}\label{segnodfdfdfdf4578}
 (u_1^2+4x_1^2+\varepsilon^2)\Delta u=4x_1u_1\ge 0\qquad \text{at }\tilde x,
\end{equation}
the last inequality following by \eqref{eq:mono-first-positive}. 
Therefore, \eqref{serveperplapropdoudn3859} and \eqref{segnodfdfdfdf4578} grant  
$$\Le v=-8x_1^2\Delta u<0,$$
again a contradiction. The thesis follows.
\end{proof}
Finally, we prove \Cref{prooffromboundtoboundderprop3848}.
\begin{proof}[Proof of \Cref{prooffromboundtoboundderprop3848}]
   Let $x\in B_R$. Since $x_1\mapsto u_1(x_1,\ldots,x_n)$ is even and continuous, it suffices to assume $x_1\ge 0$.  Combining \eqref{uniformestimatesproprincheigh} and \eqref{eq:mono-dilation-conclusion},
   \begin{equation*}
       x_1 u_1\geq u\geq x_1.
   \end{equation*}
   Since $x_1\ge 0$, the thesis follows dividing by $x_1$.
\end{proof}
\subsection{Proof of \Cref{thm_main_1}}
Let $(R_i)_{i\in \mathbb N_+}$ be any diverging sequence of radii. Set  $(\eps_i)_{i\in \mathbb N_+}\subseteq(0,1]$ be any vanishing sequence of approximating parameters. Let $K_i=K(R_i)$ be as in \eqref{definizionekappaperheightestimates3948568}. Let $u^i\in C^\infty(\overline B_{R_i})$ be the solution to \eqref{eq_diri_ball_eps} given by \Cref{propositiondiriboundcondex}. Fix $j\in\mathbb N_+$. Up to a subsequence, we assume that $B_j\Subset B_{R_i}$ for any $i$. Since $x_1\mapsto u^i(x_1,\ldots,x_n)$ is odd, the combination of  \Cref{quasimaximumprincipleproposition}, \Cref{propunistimamain3948573883derivata}  and \Cref{thm_uniform_gradient} yields a $C^1$-bound for $u^i$ on $\overline B_j$ that is uniform with respect to $i$. Combining Ascoli-Arzelà theorem with a standard diagonal procedure with respect to $j$, there exists $u\in\mathrm{Lip}_\mathrm{loc}(\rr^n)$ such that $u^i\to u$ locally uniformly on $\rr^n$. Define 
\begin{equation*}
    \flui(x,p) =(\flui_1(x,p),\ldots,\flui_n(x,p))\coloneqq \frac{p}{\sqrt{|p|^2+4|x|^2+\eps_i^2}},
\end{equation*}
so that
\begin{equation*}
    \Qei u^i=\divv\flui(x,Du^i).
\end{equation*}
Notice that
\begin{equation*}
    \partial _{x_k}\flui_\ell(x,p)=-\frac{4x_kp_\ell}{\left(|p|^2+4|x|^2+\eps^2\right)^\frac{3}{2}},\qquad  \partial _{p_k}\flui_\ell(x,p)=\frac{\delta_{k\ell}}{\sqrt{|p|^2+4|x|^2+\eps^2}}-\frac{p_kp_\ell}{\left(|p|^2+4|x|^2+\eps^2\right)^\frac{3}{2}}.
\end{equation*}
Therefore, for any fixed $j\in\mathbb N$, $j\geq 2$, $\Qe$ satisfies the assumptions of \cite[Theorem 13.2]{MR1814364} on 
\begin{equation*}
    P_{j+1}\coloneq \left\{\frac{1}{j+1}\le|x|\le j+1\right\},
\end{equation*}
with structural constants that depend on $j$ but that are independent of $i$. Therefore, since we already have a uniform $C^1$-control of $u^i$ on $B_{j+1}$, \cite[Theorem 13.2]{MR1814364} yields a uniform $C^{1,\beta}$-control of $u^i$ on $P_j$. Moreover, we recall that $u_i$ satisfies the linear, second-order, uniformly elliptic equation 
\begin{equation}\label{exlininririririiririr}
\sum_{k,\ell=1}^n\mathcal \mathcal A^{\eps_i}_{k\ell}(x)\big(D^2 u^i\big)_{k\ell}-4 \langle Du^i,x\rangle=0\qquad \text{on $P_j$,}\qquad \Aei(x)\coloneqq A^{\eps_i}(x,Du^i).
\end{equation}
Since $|x|\ge j^{-1}$ on $P_j$, \eqref{eigenvaluesofaeps} and the uniform $C^{1,\beta}$-bounds on $u^i$ yield a uniform ellipticity constant and a uniform $C^{0,\beta}$-bound on the coefficients of \eqref{exlininririririiririr}. Therefore, classical Schauder estimates as in \cite[Theorem 6.2]{MR1814364} grant a uniform $C^{2,\beta}$-bound for $u^i$ on $P_{j-1}$. Therefore, we apply again Ascoli-Arzelà theorem on $P_{j-1}$, concluding that $u\in C^{2,\beta}_{\mathrm{loc}}(\rr^n\setminus\{0\})\cap \mathrm{Lip}_\mathrm{loc}(\rr^n)$ and that $u^i\to u$ locally in $C^2(\rr^n\setminus\{0\})$. In particular, when $x\neq 0$, we let $i\to\infty$ in $\Qei u^i(x)=0$, concluding that 
\begin{equation}\label{equazionenoninzero}
    \divv\left(\frac{Du}{\sqrt{|Du|^2+4|x|^2}}\right)=0\qquad\text{in }\rr^n\setminus\{0\}.
\end{equation}
Away from $0$, one argues \emph{verbatim} as in the proof of \Cref{propositiondiriboundcondex} to show that $u$ solves a linear, second-order, uniformly elliptic equation, and conclude as in that case that $u\in C^\infty(\rr^n\setminus\{0\})$. Fix $x\in\rr^n\setminus\{0\}$. Since, by \Cref{prooffromboundtoboundderprop3848}, $(u^i)_1(x)\geq 1$ for any $i$ sufficiently large, then $C^1$-convergence ensures that $u_1(x)\geq 1$. To conclude, we are left to show that $x$ is smooth at $0$. Indeed, in this case,  \eqref{equazionenoninzero} extends to $0$ by continuity, and $u_1(0)\geq 1$ for the same reason. In order to conclude that $u\in C^\infty(\rr^n),$ we rely on the regularity results of \cite{MR2774306}. Accordingly, for any $i$, define $g^i:\rr^{2n}\to\rr$ and $G^i:\hh^n\to\rr$ by 
\begin{equation*}
    g^i(x,y)\coloneqq\langle x,y\rangle+u^i(x),\qquad  g(x,y)\coloneqq\langle x,y\rangle+u(x)\qquad G^i(x,y,t)\coloneqq g^i(x,y)-t.
\end{equation*}
Since $\Qei u^i=0$,  \eqref{espressionecurvaturamediariempergarficispeciali} grants that, for $i$ sufficiently large,
\begin{equation*}
    \divv\left(\frac{Dg^i+F}{\sqrt{|Dg^i+F|^2+\eps_i^2}}\right)=0\qquad\text{on }B_1,
\end{equation*}
where $F$ is defined in \Cref{sec_tgraph}. Moreover, by the above considerations, $(g^i)_i$ is bounded in $C^1(\overline B_1)$, uniformly with respect to $i$, and converges to $g$ uniformly on $B_1$. Moreover,
\begin{equation*}
    X_1 G^i=y_1+(u^i)_1-y_1=(u^i)_1\geq 1,
\end{equation*}
where the last inequality holds by \Cref{prooffromboundtoboundderprop3848}. Therefore, one argues \emph{verbatim} as in the proof of \cite[Corollary 1.3]{MR2774306} to conclude that $g\in C^\infty(B_1)$. By definition of $g$, $u\in C^\infty(\rr^n)$, and the thesis follows.
\subsection{Proof of \Cref{thm_no_rigidity}}
Let $u$ be as in \Cref{thm_main_1}. Define $S\subseteq\hh^n$ by
\begin{equation*}\label{il_controesempio}
        S=\{(x,y,t) \in \hh^n\, :\,t=\left\langle x,y\right\rangle+u(x)\}.
    \end{equation*}
   Clearly, $S$ is smooth, connected, embedded, two-sided, complete, without boundary. Since $u_1\geq 1$, \eqref{carsetforspecialgraf} implies that $S$ is non-characteristic. Moreover, by \eqref{espressionecurvaturamediariempergarficispeciali}, \Cref{thm_main_1} grants that $S$ is minimal. Since $S$ is a $t$-graphs, \cite[Example 2.6]{MR2333095} (cf. \cite[Theorem 2.3]{MR2333095}) implies that $S$ is the boundary of a global perimeter minimizer. In particular, $S$ is stable, and satisfies \eqref{updesubintro} e.g. by \cite[Theorem 2.2]{psv}. In addition, notice that $S=\{G=0\}$, where $G(x,y,t)=\langle x,y\rangle+u(x)-t$. Since $u_1\geq 1$, then $X_1 G= u_1\geq 1$. Therefore, by \Cref{propcriteriointrinsic}, $S$ is an entire, smooth intrinsic graph.  Since $S$ is a $t$-graph, it is not a vertical hyperplane.
Finally, we show that $S$ does not satisfy \eqref{eq:P1}, \eqref{eq:P2} and \eqref{eq:P3}.
Since $u$ is odd in $x_1$, then
$u=0$ on $\{x_1=0\}$, and $u_i=0$ on $\{x_1=0\}$ for any $i=2,\ldots,n$. Define the curve $z:\rr\to\rr^{2n}$ by $z(r)=(x(r),0)\in\rr^{2n}$, with $x(r)=(0,r,0\ldots,0)$. The quantities below are evaluated at $z(r)$. By \Cref{sec_specialclass}, 
\begin{equation}\label{componierfnerivjnvijvn}
      \nu=-\frac{u_1X_1+2rY_2}{W},
    \qquad
    J(\nu)=\frac{2rX_2-u_1Y_1}{W},
    \qquad
    \alpha=\frac1W,
    \qquad
    W^2=u_1^2+4r^2.
\end{equation}
Define
\[
    \mathbf B:=\frac{2rX_1-u_1Y_2}{W},
    \qquad
    J(\mathbf B)=\frac{u_1X_2+2rY_1}{W}.
\]
Both vector fields have unitary norm. By \eqref{componierfnerivjnvijvn}, both belongs to $\hhh' TS$ over $z(r)$. Denote by $\frac{d}{dr}$ differentiation along $z(r)$. Since
\[
    \frac{dW}{dr}=\frac{u_1u_{12}+4r}{W},
\]
then
\begin{equation*}
    \nabla_{J(\nu)}\nu
    =-\frac{2r}{W}
      \left[
        \frac{d}{dr}\left(\frac{u_1}{W}\right)X_1
        +\frac{d}{dr}\left(\frac{2r}{W}\right)Y_2
      \right]=\frac{4r(u_1-ru_{12})}{W^3}\,\mathbf B.
\end{equation*}
Therefore,
\begin{equation*}
     h^{\hhh}(J(\nu),\mathbf B)
   =\frac{4r(u_1-ru_{12})}{W^3}.
\end{equation*}
On the other hand, notice that, since $u_1\geq 1$, 
\[
   \lim_{r\to 0} u_1(z(r))-ru_{12}(z(r))= u_1(0)\geqslant1, 
\]
so that $ h^{\hhh}(J(\nu),\mathbf B)\neq 0$ 
sufficiently close to (but away from) $0$. Therefore, \eqref{eq:P1} fails.
In addition,
\begin{align}
    J(\mathbf B)\alpha
    &=\frac{u_1}{W}\frac{d}{dr}\left(\frac1W\right),\label{erjcnefivjntrvijnvi}\\
    J(\nu)\alpha+\alpha^2
    &=\frac{2r}{W}\frac{d}{dr}\left(\frac1W\right)+\frac1{W^2}
      =\frac{d}{dr}\left(\frac{r}{W^2}\right)\label{erjfnerifjntrfijtnf}.
\end{align}
On the other hand, since $W^2=u_1^2+4r^2\geqslant4r^2$, then
\begin{equation}\label{erinrfirjtnvrifcjnerijrn}
    0<\frac1W\leqslant\frac1{2r}\longrightarrow0,
    \qquad
    0<\frac{r}{W^2}\leqslant\frac1{4r}\longrightarrow0
    \qquad\text{as }r\to+\infty.
\end{equation}
Assume by contradiction that \eqref{eq:P2} holds. Since $u_1\geq 1$, \eqref{erjcnefivjntrvijnvi} implies that $W$ is constant over $z(r)$, a contradiction with \eqref{erinrfirjtnvrifcjnerijrn}. Then \eqref{eq:P2} fails. Finally, assume by contradiction that \eqref{eq:P3} holds. Then, by \eqref{erjfnerifjntrfijtnf}, $r/W^2$ is
non-decreasing on $(0,+\infty)$, contradicting \eqref{erinrfirjtnvrifcjnerijrn}. Thus \eqref{eq:P3} fails, and the thesis follows.

\section*{Declarations}
\footnotesize{
%\noindent \textbf{AI usage statement} 
%The authors acknowledge the use ChatGPT-6 Astra, in the following way. 
%Initially, the authors asked ChatGPT-6 Astra to look for relations occurring between minimality and \eqref{eq:P1}. Precisely, the authors gave \cite{simons} as input, asking ChatGPT-6 Astra to focus on \cite[Example 3.8]{simons}, namely \eqref{eofjnt4rvjntrgvr} of this paper, as an instance of characteristic minimal hypersurface satisfying \eqref{eq:P1}, and asked it try to understand how a perturbation as in \eqref{eofjnt4rvjntrgvr} would have affected either minimality or \eqref{eq:P1}. After a long series of interactions, ChatGPT-6 Astra noticed that a solution to \eqref{eq:target}, because of its derivative bound, would have produced a minimal non-characteristic hypersurface which would have not satisfied \eqref{eq:P1}. Incidentally, the authors realized that such a construction would have provided a counterexample to the higher dimensional Bernstein problem. Subsequently, the authors implemented the Riemannian approximation scheme. To this aim, the contribution of ChatGPT-6 Astra have been clearly crucial in detecting the barriers in \Cref{erfnjtrvijng}. Moreover, ChatGPT-6 Astra helped in sketching the monotonicity argument in \Cref{fvrtg5ytbgtygvtr}. 
%The paper has been entirely written and checked by the authors. The authors take full
%responsibility for the content of the paper.
%\smallskip
%
\noindent{\textbf{Conflict of interest.}} The authors have no financial interests or conflicts of interest related to the subject matter.
\smallskip

\noindent \textbf{Data availability statement.} Data sharing not applicable as no datasets were generated or analyzed during this study.}
\bibliographystyle{abbrv}
\bibliography{biblio}
\end{document}